\newcommand{\dfn}[1]{\textit{#1}} 
\newcommand{\vd}[1]{deg(#1)} 
\newcommand{\inter}[1]{\operatorname{int}(#1)}
\newcommand{\ext}[1]{\operatorname{ext}(#1)}
\theoremstyle{plain} \newtheorem{thm}{Theorem}[section]
\theoremstyle{plain} \newtheorem{prop}[thm]{Proposition}
\theoremstyle{plain} \newtheorem{lem}[thm]{Lemma}
\theoremstyle{plain} \newtheorem{cor}[thm]{Corollary}
\theoremstyle{plain} 
\theoremstyle{plain} 
\theoremstyle{plain} 
\newenvironment{proofwithoutqed}{\emph{Proof.}}{\hfill\null}
\title{Geometric representations of binary codes embeddable in three dimensions}
\author{Pavel Rytíř\thanks{Supported by the Czech Science Foundation under the contract no. 201/09/H057 and by the grant \mbox{SVV-2010-261313} (Discrete Methods and Algorithms).}\\ {\small Department of Applied Mathematics,} {\small Charles University in Prague,}\\ {\small Malostranské~náměstí~25}, {\small Prague 118 00, Czech Republic}}
\begin{document}
\maketitle

\begin{abstract}
We say that a binary linear code $\mathcal{C}$ has a geometric representation if there exists a two dimensional simplicial complex $\Delta$ such that $\mathcal{C}$ is a punctured code of the kernel $\ker\Delta$ of the incidence matrix of $\Delta$ and $\dim\mathcal{C}=\dim\ker\Delta$. We show that every binary linear code has a geometric representation that can be embedded into $\mathbb{R}^4$. Moreover, we show that a binary linear code $\mathcal{C}$ has a geometric representation in $\mathbb{R}^3$ if and only if there exists a graph $G$ such that $\mathcal{C}$ equals the cut space of $G$. This is a polynomially testable property and hence we can conclude that there is a polynomial algorithm that decides the minimal dimension of a geometric representation of a binary linear code.
\end{abstract}

\section{Introduction}
This paper extends results of Rytíř~\cite{rytir2,rytir3} where it was proven that every binary linear code has a geometric representation.
Here we show that each binary linear code has a geometric representation that can be embedded into $\mathbb{R}^4$. Moreover we characterize those $\mathcal{C}$ which admit a geometric representation in $\mathbb{R}^3$.

A \dfn{linear code $\mathcal{C}$ of length $n$ and dimension $d$ over a field $\mathbb{F}$} is a linear subspace with dimension $d$ of the vector space $\mathbb{F}^n$. Each vector in $\mathcal{C}$ is called a \dfn{codeword}. Let $B$ be a basis of a binary code $\mathcal{C}$. A basis $B$ is k-basis if every entry is non-zero in at most $k$ vectors of $B$.


Let $\mathcal{C}\subseteq\mathbb{F}^n$ be a linear code over a field $\mathbb{F}$ and let $S$ be a subset of $\left\lbrace 1,\dots,n\right\rbrace $. \dfn{Puncturing} a code $\mathcal{C}$ along $S$ means deleting the entries indexed by the elements of $S$ from each codeword of $\mathcal{C}$. The resulting code is denoted by $\mathcal{C}/S$.

A \dfn{simplex} $X$ is the convex hull of an affine independent set $V$ in $\mathbb{R}^d$. The \dfn{dimension} of $X$ is $\left\lvert V\right\rvert-1$, denoted by $\dim X$.
The convex hull of any non-empty subset of $V$ that defines a simplex is called a \dfn{face} of the simplex.
A \dfn{simplicial complex} $\Delta$ is a set of simplices fulfilling the following conditions: Every face of a simplex from $\Delta$ belongs to $\Delta$ and the intersection of every two simplices of $\Delta$ is a face of both.

The dimension of $\Delta$ is $\max\left\lbrace\dim X\vert X\in\Delta\right\rbrace$.
Let $\Delta$ be a $d$-dimensional simplicial complex. We define the \dfn{incidence matrix} $A=\left(A_{ij}\right)$ as follows: The rows are indexed by $\left(d-1\right)$-dimensional simplices and the columns by $d$-dimensional simplices. We set
\begin{equation*}
A_{ij}:=\begin{cases}
         1& \text{if }(d-1)\text{-simplex }i\text{ belongs to }d\text{-simplex }j,\\
         0& \text{otherwise}.
        \end{cases}
\end{equation*}
This paper studies two dimensional simplicial complexes where each maximal simplex is a triangle or a segment. We call them \dfn{triangular configurations}.
Let $\Delta$ be a triangular configuration. A \dfn{subconfiguration} of $\Delta$ is a subset of $\Delta$ that is a triangular configuration.
We denote the set of triangles of $\Delta$ by $T(\Delta)$.
The \dfn{cycle space} of $\Delta$ over a field $\mathbb{F}$, denoted $\ker\Delta$, is the kernel of the incidence matrix $A$ of $\Delta$ over $\mathbb{F}$, that is
$\{x\vert Ax=0\}$.
Let $T$ be a subset of the set of triangles of $\Delta$.
We denote by $\mathcal{K}(T)$ the triangular configuration that is defined by the set of triangles $T$.
The \dfn{even subset} or \dfn{cycle} of $\Delta$ is a subset $E$ of the set of triangles of $\Delta$ such that all edges of the triangular configuration $\mathcal{K}(E)$ have an even degree.

Let $\{t_1,\dots,t_m\}$ be the set of triangles of $\Delta$.
For a subconfiguration $\Delta'$ of $\Delta$, we let $\chi(\Delta')=(\chi(\Delta')_1,\dots,\chi(\Delta')_m)\in\left\lbrace 0,1\right\rbrace^m$ denote its \dfn{characteristics vector}, where $\chi(\Delta')_i=1$ if $\Delta'$ contains triangle $t_i$, and $\chi(\Delta')_i=0$ otherwise.
Note that, the characteristics vectors of even subsets of $\Delta$ forms the cycle space of $\Delta$.

Let $E_1$ and $E_2$ be sets.
Then the \dfn{symmetric difference} of $E_1$ and $E_2$, denoted by $E_1\bigtriangleup E_2$, is defined to be $E_1\bigtriangleup E_2:=\left(E_1\cup E_2\right)\setminus\left(E_1\cap E_2\right)$. Note that, the symmetric difference of two even subsets $E_1$ and $E_2$ of $\Delta$ is also even subset of $\Delta$ and it holds $\chi(\mathcal{K}(E_1))+\chi(\mathcal{K}(E_2))=\chi(\mathcal{K}(E_1\bigtriangleup E_2))$ over $GF(2)$.


A linear code $\mathcal{C}$ has a \dfn{geometric representation} if there exists a triangular configuration $\Delta$ such that $\mathcal{C}=\ker\Delta/S$ for some set $S$ and $\dim\mathcal{C}=\dim\ker\Delta$. For such $S$ we write $S=S(\ker\Delta,\mathcal{C})$.
\begin{thm}[Rytíř \cite{rytir3}]
\label{thm:repr1}
Let $\mathcal{C}$ be a linear code over rationals or over $GF(p)$, where $p$ is a prime. Then $\mathcal{C}$ has a geometric representation.
\end{thm}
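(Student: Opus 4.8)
The plan is to build the triangular configuration $\Delta$ explicitly from a parity-check description of $\mathcal{C}$, so that the kernel of its incidence matrix projects bijectively onto $\mathcal{C}$. Concretely, I would fix a parity-check matrix $H=(h_{ij})$ with $\mathcal{C}=\{y\in\mathbb{F}^n : Hy=0\}$, reserve $n$ distinguished \emph{terminal} triangles $t_1,\dots,t_n$, one per coordinate, and declare that puncturing keeps exactly these, i.e. $S(\ker\Delta,\mathcal{C})=T(\Delta)\setminus\{t_1,\dots,t_n\}$; every other triangle is auxiliary and is deleted. The goal is then to arrange the auxiliary part so that (i) a vector in $\ker\Delta$ is completely determined by its values on the terminals (yielding $\dim\ker\Delta\le n$ and injectivity of the projection), and (ii) the values attainable on the terminals are exactly the codewords of $\mathcal{C}$ (yielding image $\mathcal{C}$ and hence, with (i), $\dim\ker\Delta=\dim\mathcal{C}$).

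The engine of the construction is a small library of rigid gadgets built from the two primitive incidence relations available to a triangular configuration. An edge meeting exactly two triangles $a,b$ forces $x_a+x_b=0$ in $\ker\Delta$, so a chain of triangles glued along such edges transports a value with alternating sign; even-length chains give an equality wire $x_{\mathrm{out}}=x_{\mathrm{in}}$. An edge meeting exactly three triangles $a,b,c$ forces $x_a+x_b+x_c=0$, which together with equality wires and sign flips realizes a sum junction $x_c=x_a+x_b$. A long equality wire that leaves one free edge on each of its triangles acts as a fan-out bus, producing arbitrarily many triangles carrying the same value $\pm v$, each tappable by a separate gadget. Iterating the junction on $k$ copies drawn from such a bus realizes multiplication by a positive integer $k$; over $GF(p)$ every scalar is such a $k$, and over $\mathbb{Q}$ a coefficient $p/q$ is handled by imposing $q\,x_{\mathrm{out}}=p\,x_{\mathrm{in}}$.

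With this library, each terminal $t_j$ is extended by a fan-out bus carrying $y_j$, and each parity-check row $\sum_j h_{ij}y_j=0$ is implemented by routing a copy of each $y_j$ through a scalar gadget for $h_{ij}$ and summing the results to zero through a tree of junctions; all triangles created in this process are auxiliary. The point to verify is that every gadget is rigid: its auxiliary triangle-values are uniquely forced by the values at the triangles where it attaches to the rest of $\Delta$ (the wire forces successive triangles one at a time, the junction forces its third triangle from the other two). Because the gadgets are wired together feed-forward out of the terminals, this forcing propagates with no free internal parameter; in particular, if all terminals are $0$ then the only kernel vector is $0$, giving (i). The relations induced on the terminals are then exactly the rows of $H$ and nothing more, so the attainable terminal vectors form $\ker H=\mathcal{C}$ and every $y\in\mathcal{C}$ extends to a kernel vector, giving (ii).

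I expect the main obstacle to be twofold, and to lie in the bookkeeping rather than in any single idea. First, one must realize arbitrary field coefficients using only the unsigned, three-ones-per-column incidence structure; the integer and clearing-denominators trick above settles $GF(p)$ and $\mathbb{Q}$ uniformly, but the sign bookkeeping forced by the unsigned relation $\sum_{t\ni e}x_t=0$ has to be tracked through every wire and junction. Second, and more seriously, one must guarantee that the abstract gluing actually defines a legitimate triangular configuration — that any two simplices meet in a common face and that each auxiliary edge has exactly the intended degree — and that no loop in the gadget interconnection secretly introduces an extra cycle that would make the projection onto the terminals non-injective and break the count $\dim\ker\Delta=\dim\mathcal{C}$. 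Establishing rigidity and validity simultaneously, by keeping the gadgets internally disjoint and attaching them only along the designated shared edges, is where the real work of the proof goes.
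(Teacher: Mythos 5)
There is a genuine gap, and it is fatal to the whole gadget library rather than to the bookkeeping you were worried about. In this paper's setting the incidence matrix of a triangular configuration has a row for \emph{every} edge of the complex, including an edge that is a face of exactly one triangle; such a row reads $x_t=0$. Hence, over every field, any triangle possessing a ``free'' (degree-one) edge is forced to carry the value $0$ in every vector of $\ker\Delta$. Your basic gadgets are built around precisely such triangles: the equality wire and the fan-out bus are described as chains that ``leave one free edge on each of their triangles,'' so every wire and bus triangle is forced to zero, every value tapped from a bus is zero, and, since the terminals $t_1,\dots,t_n$ attach to buses (and themselves have uncovered edges), the terminal values are forced to zero as well. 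Your rigidity claim (i) then holds vacuously, but claim (ii) collapses: the attainable terminal vectors form $\{0\}$, not $\ker H=\mathcal{C}$, so neither $\mathcal{C}=\ker\Delta/S$ nor $\dim\mathcal{C}=\dim\ker\Delta$ can hold unless $\mathcal{C}$ is the zero code. The observation you needed is the converse of the one you exploited: the support of any nonzero kernel vector must be a subcomplex in which \emph{every} edge has degree at least two, i.e.\ a closed, boundaryless object; open-ended chains and junction trees can never carry information.

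This is exactly why the construction behind the quoted theorem, and its binary analogues in Sections~\ref{sec:repr3d} and~\ref{sec:repr4d} of this paper, use no open-ended pieces: all building blocks are closed triangulated spheres $S^n$, joined by tunnels that are glued along the boundaries of removed triangles, so that in the finished complex no triangle that must carry a nonzero value has a boundary edge (the only free edges occur on the isolated triangles $B^n_i$ for coordinates that are zero in all codewords, where forcing zero is the desired behaviour). Values then do propagate by sign alternation, as in your wire, but around the dual graph of a closed surface, so each building block contributes a one-dimensional piece of the kernel; the linear relations among coordinates are encoded by which closed pieces pass through the shared coordinate triangles, not by parity-check trees terminated by dangling edges. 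Your degree-three junction and your implicit zero-forcing primitive are legitimate local relations, but to use them you would have to close up every wire, bus, and scalar gadget into a boundaryless complex while keeping the dual-graph sign structure consistent; carrying that out is essentially rebuilding the sphere-and-tunnel construction you were trying to replace.
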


\subsection{Main Results}
A basis $B$ of a binary linear code $\mathcal{C}\subseteq GF(2)^n$ is 2-basis if every entry $i\leq n$ is non-zero in at most two vectors of $B$.
\begin{thm}
\label{thm:embhas2bas}
Let $\Delta$ be a triangular configuration embeddable into $R^3$ then $\ker\Delta$ has a 2-basis.
\end{thm}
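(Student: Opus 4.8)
The plan is to exploit the embedding of $\Delta$ into $\mathbb{R}^3$ by passing to the one-point compactification $S^3$ and identifying $\ker\Delta$ with the cut space of a dual graph built from the complementary regions. First I would consider the open complement $S^3\setminus|\Delta|$, which decomposes into finitely many connected regions $R_0,R_1,\dots,R_k$. The decisive geometric observation is that the relative interior of each triangle $t\in T(\Delta)$ is an open disk whose thin tubular neighbourhood meets the complement in exactly two connected sides; since each side is connected it lies in a single region, so every triangle is adjacent to at most two regions. This lets me build a dual graph $G^*$ whose vertices are the regions $R_j$ and whose edges are the triangles, where $t$ joins the two regions on its two sides (a loop when both sides face the same region).

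Next I would set up the correspondence between $\ker\Delta$ and the cut space of $G^*$. For a set $U$ of regions let $\partial U$ denote the set of triangles having exactly one side facing a region of $U$. To see that $\partial U$ is a cycle, I would examine the cyclic arrangement of regions around a fixed edge $e$: the triangles containing $e$ cut a small transverse disk into sectors, which are regions, and travelling once around $e$ one enters and leaves $U$ an even number of times, so $e$ has even degree in $\mathcal{K}(\partial U)$. Hence $\chi(\mathcal{K}(\partial U))$ lies in $\ker\Delta$, and the map $U\mapsto\chi(\mathcal{K}(\partial U))$ is precisely the cut-space map of $G^*$.

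The main obstacle is surjectivity: I must show every even subset $E$ of $\Delta$ arises as some $\partial U$. I would seek a two-colouring $f$ of the regions over $GF(2)$ satisfying $f(R_a)+f(R_b)=\chi(\mathcal{K}(E))_t$ across each triangle $t$ joining $R_a$ and $R_b$. Such a system is solvable exactly when every closed walk of $G^*$ meets $E$ an even number of times. Since $E$ is a cycle of $\Delta$ and $S^3$ is simply connected, every closed walk of $G^*$, realised as a loop transverse to $|\Delta|$, is null-homotopic and therefore crosses the $2$-cycle $E$ evenly; thus $f$ exists and $E=\partial\left(f^{-1}(1)\right)$. This identifies $\ker\Delta$ with the cut space of $G^*$.

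Finally, I would invoke the elementary fact that the cut space of any graph admits a $2$-basis: the vertex stars generate it, and choosing the stars of all but one region in each connected component of $G^*$ yields a basis. Because each triangle is a single edge of $G^*$, it lies in the stars of at most two regions, so it is non-zero in at most two of these basis cycles. Translating these cut cycles back through the correspondence gives a basis of $\ker\Delta$ in which every triangle occurs in at most two basis vectors, which is exactly a $2$-basis.
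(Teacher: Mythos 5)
Your argument is correct in outline, but it takes a genuinely different route from the paper, and it is worth comparing the two. The paper never forms a dual graph: it introduces \emph{strong boundaries} (minimal subconfigurations with at least two cells) and \emph{elementary strong boundaries}, proves via an elementary ray-casting (Jordan-type) argument that every non-empty even subset has at least two cells (Proposition~\ref{prop:divides}), obtains spanning by iteratively peeling strong boundaries off an even subset (Theorem~\ref{thm:s2basiscon}) and decomposing each strong boundary into elementary ones (Proposition~\ref{prop:boundcomb}), and proves linear independence separately (Proposition~\ref{prop:independent}). Your basis is in fact the same one: the vertex stars of your region graph $G^*$ are exactly the paper's elementary strong boundaries, and omitting the region of $G^*$ containing the point at infinity corresponds to omitting the unbounded cell. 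What your route buys is economy and transparency: passing to $S^3$ treats all regions symmetrically, the identification of $\ker\Delta$ with the cut space of $G^*$ replaces the peeling and independence arguments by the standard graph-theoretic fact that cut spaces have $2$-bases, and it makes Theorem~\ref{thm:repr3dgraphs} (which the paper only gets afterwards, via Proposition~\ref{prop:2basisgraphs}) immediate. What it costs is self-containment: your surjectivity step rests on the assertion that a null-homotopic loop transverse to $\lvert\Delta\rvert$ crosses the $2$-cycle $E$ an even number of times, and that mod-$2$ intersection statement is precisely where the topological content of the whole theorem lives. The claim is true and can be proved at the level of the paper: cap the loop $\gamma$ with a piecewise-linear disk $D$ in general position, and consider the graph $D\cap\mathcal{K}(E)$; its vertices on $\gamma$ have degree $1$, while each vertex where $D$ meets an open edge $e$ of $\mathcal{K}(E)$ has degree equal to the number of triangles of $E$ containing $e$, which is even since $E$ is an even subset, so the number of degree-one vertices, i.e.\ $\lvert\gamma\cap E\rvert$, is even. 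The paper's ray-casting lemma inside Proposition~\ref{prop:divides} is exactly the elementary substitute for this fact (indeed, the parity function $P(R(x),\Delta)$ defined there is a solution $f$ of your $GF(2)$ system), so to make your write-up complete you would need to include such an argument rather than cite intersection theory as a black box.
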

\begin{proof}
The proof follows from Theorem~\ref{thm:s2basis} in Section~\ref{sec:proofembhas2bas}.
\end{proof}

By Whitney's theorem, the cycle space of a 3-connected graph $G$ determines $G$. It is therefore natural to ask whether our result can help to answer the question: Given a 2 dimensional simplicial complex, is it embeddable into $\mathbb{R}^3$? Theorem~\ref{thm:embhas2bas} gives only a necessary condition. For example no triangulation of the Klein bottle can be embedded into $\mathbb{R}^3$ and its cycle space has a 2-basis.
The topic of embedding of simplicial complexes is treated in Matoušek~et~al.~\cite{matousek1}.


The main result of this paper is that existence of a 2-basis characterize geometric representations in $\mathbb{R}^3$.

\begin{thm}
\label{thm:repr3d}
A binary linear code $\mathcal{C}$ has a geometric representation embeddable into $\mathbb{R}^3$ if and only if $\mathcal{C}$ has a 2-basis.
\end{thm}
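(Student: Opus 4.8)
The plan is to prove Theorem~\ref{thm:repr3d} as a biconditional, with the two directions requiring quite different techniques.

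For the forward direction, suppose $\mathcal{C}$ has a geometric representation embeddable into $\mathbb{R}^3$, so $\mathcal{C}=\ker\Delta/S$ with $\dim\mathcal{C}=\dim\ker\Delta$ for some triangular configuration $\Delta$ embeddable into $\mathbb{R}^3$. By Theorem~\ref{thm:embhas2bas}, $\ker\Delta$ has a 2-basis. The remaining work is to check that puncturing along $S$, while preserving dimension, carries a 2-basis to a 2-basis. Since $\dim\mathcal{C}=\dim\ker\Delta$, puncturing is injective on $\ker\Delta$, so a basis $B$ of $\ker\Delta$ maps bijectively to a basis $B/S$ of $\mathcal{C}$; deleting coordinates can only decrease the number of vectors in which a given surviving entry is non-zero, so the 2-basis property is inherited. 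Thus this direction reduces almost entirely to Theorem~\ref{thm:embhas2bas}, and I expect it to be routine.

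The reverse direction is the substance of the theorem and the main obstacle. Given a binary code $\mathcal{C}$ with a 2-basis $B=\{b_1,\dots,b_d\}$, I must construct a triangular configuration $\Delta$, embeddable into $\mathbb{R}^3$, together with a puncturing set $S$, so that $\ker\Delta/S=\mathcal{C}$ and $\dim\ker\Delta=\dim\mathcal{C}$. The natural idea is to read the 2-basis combinatorially: each basis vector $b_j$ should correspond to an even subset (cycle) of triangles, and because every coordinate lies in at most two basis vectors, the ``interaction'' between cycles is pairwise and hence local. I would build $\Delta$ by first realizing each $b_j$ independently as a small triangulated surface whose cycle space is one-dimensional (for instance a triangulated annulus or cylinder, whose cycle space is generated by a single even subset), and then gluing these pieces together along shared triangles exactly at the coordinates where two basis vectors both have a $1$. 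The 2-basis condition guarantees that each gluing involves only two pieces at a time, which is precisely what allows the construction to stay embeddable in $\mathbb{R}^3$: one can route the cylinders through space and perform each pairwise identification locally without forcing self-intersections.

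The delicate points, which I expect to form the core of the argument, are threefold. First, I must verify that after all the gluings the cycle space $\ker\Delta$ has dimension exactly $d$ and is spanned by the images of the $b_j$, so that no spurious cycles are created and no intended cycles are destroyed; this is where the linear-algebraic bookkeeping over $GF(2)$ must be done carefully, using the fact established in the excerpt that symmetric difference of even subsets corresponds to addition of characteristic vectors over $GF(2)$. Second, the puncturing set $S$ must be chosen to discard exactly the auxiliary coordinates (the triangles introduced to build the cylinders and perform the gluings) while keeping one representative coordinate per original entry of $\mathcal{C}$, and I must confirm $\dim\mathcal{C}=\dim\ker\Delta$ so that puncturing is dimension-preserving. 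Third, and hardest, is producing an actual embedding into $\mathbb{R}^3$: I would argue that each cylinder piece can be embedded as a tube in space and that the pairwise gluings, being local and involving only two tubes each, can be realized by arranging the tubes along disjoint regions and connecting them two at a time, so that the whole complex embeds without self-intersection. The main obstacle is thus controlling this global embedding from purely local two-at-a-time gluing data, and showing that the 2-basis hypothesis is exactly strong enough to prevent the obstructions (such as the Klein-bottle phenomenon noted after Theorem~\ref{thm:embhas2bas}) that would otherwise block an embedding in three dimensions.
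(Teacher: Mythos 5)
Your forward direction is correct and in fact slightly more careful than the paper's own treatment: the paper simply invokes Theorem~\ref{thm:embhas2bas}, whereas you also verify that puncturing along $S$, being injective on $\ker\Delta$ (because $\dim\mathcal{C}=\dim\ker\Delta$), carries a 2-basis of $\ker\Delta$ to a 2-basis of $\mathcal{C}$. That observation is needed and correct.

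The reverse direction, however, contains a genuine error in its key ingredient. In this paper the cycle space of a triangular configuration is the kernel of the edge--triangle incidence matrix, so its elements are sets of triangles in which \emph{every} edge has even degree. A triangulated annulus or cylinder has \emph{trivial} cycle space in this sense: the boundary edges of the full triangle set have degree one, and no nonempty subset of triangles of a surface with boundary is even (equivalently, $Z_2$ vanishes because the annulus contains no closed surface). You appear to be conflating the one-dimensional \emph{graph} cycle space of a cylinder (generated by a 1-cycle of edges) with its 2-dimensional cycle space, which is what $\ker\Delta$ measures here. Consequently, if you realize each basis vector $b_j$ by an annulus, the intended generator is simply not in $\ker\Delta$ and the construction cannot represent $\mathcal{C}$. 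The building block must be a \emph{closed} surface; the paper uses a triangulated sphere $S^n$ (eight triangles, one face subdivided into $n$ coordinate triangles), one per basis vector, joined by ``tunnel bridges'' at each coordinate where two basis vectors are both non-zero --- which is otherwise exactly the pairwise-gluing scheme you describe. Beyond this, the three ``delicate points'' you list are precisely the substance of the proof and are all left open in your proposal: the paper settles the embedding by explicit coordinates and a disjointness lemma for bridges (Proposition~\ref{prop:2bridges}), the dimension count $\dim\ker\Delta^\mathcal{C}_B=\dim\mathcal{C}$ by showing every nonempty even subset must contain a whole sphere-minus-designated-triangles (Lemma~\ref{lem:cycletriangles}) together with a minimum-weight argument (Theorem~\ref{thm:trianrep}), and the puncturing by keeping exactly one triangle $B^n_i$ per coordinate of $\mathcal{C}$. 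So as written your plan would fail at its first step, and even after replacing annuli by spheres it remains an outline rather than a proof.
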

The above theorem is an analogy of Mac Lane's planarity criterion~\cite{maclane} for graphs.

\begin{thm}
\label{thm:repr3dgraphs}
A binary linear code $\mathcal{C}$ has a geometric representation embeddable into $\mathbb{R}^3$ if and only if there exists a graph $G$ such that $\mathcal{C}$ equals the cut space of $G$.
\end{thm}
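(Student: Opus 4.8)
The plan is to deduce this statement directly from Theorem~\ref{thm:repr3d}, which already equates the existence of a geometric representation embeddable in $\mathbb{R}^3$ with the existence of a 2-basis. It therefore suffices to prove the purely combinatorial equivalence: a binary linear code $\mathcal{C}\subseteq GF(2)^n$ has a 2-basis if and only if there is a graph $G$ whose cut space equals $\mathcal{C}$. The bridge between the two notions is the observation that the vertex stars $\delta(v)$ (the sets of edges incident to a single vertex $v$) are exactly the natural $2$-sparse generators of the cut space of $G$, since each edge lies in the stars of precisely its two endpoints.

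For the direction from cut spaces to 2-bases, I would use the fact that the cut space of $G$ is the row space over $GF(2)$ of the vertex--edge incidence matrix, and hence is spanned by the characteristic vectors of the stars $\{\delta(v)\mid v\in V(G)\}$. Every edge, viewed as a coordinate, is incident to exactly two vertices, so it is non-zero in at most two of these generators. The only dependencies among the stars are that the stars of the vertices of any connected component sum to zero; deleting one star per component yields a linearly independent spanning set, that is, a basis, and this basis is still a 2-basis.

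For the converse I would realize an arbitrary 2-basis as an incidence structure. Given a 2-basis $b_1,\dots,b_d$ of $\mathcal{C}$, form the $d\times n$ matrix whose rows are the $b_j$; by hypothesis each column has at most two ones. Build a graph $G$ (allowing loops) with one vertex $v_j$ for each basis vector, an auxiliary vertex $v_0$, and one edge $e_i$ per coordinate $i\in\{1,\dots,n\}$: if column $i$ has ones in rows $j$ and $k$ join $v_j$ and $v_k$, if it has a single one in row $j$ join $v_j$ to $v_0$, and if it is all zero make $e_i$ a loop at $v_0$. By construction the star $\delta(v_j)$ has characteristic vector $b_j$ for every $j\ge1$, so the cut space of $G$ contains $\mathcal{C}=\linspan{b_1,\dots,b_d}$. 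Moreover $\sum_{v}\delta(v)=0$ forces $\delta(v_0)=\sum_{j\ge1}\delta(v_j)\in\mathcal{C}$, so the ground-vertex star contributes nothing new and the cut space equals $\mathcal{C}$ exactly.

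The step I expect to require the most care is the bookkeeping in the converse: verifying that the cut space of the constructed graph is exactly $\mathcal{C}$ rather than a strict superspace. This hinges on the single relation $\sum_{v}\delta(v)=0$, which makes the ground-vertex star redundant, together with the correct handling of the degenerate columns (single ones routed to $v_0$, all-zero columns realized as loops that contribute nothing to any cut). I would also note that independence of the $b_j$ guarantees $\sum_{j\ge1}b_j\neq0$, so at least one column is a single one and $v_0$ is genuinely used; this keeps the rank count consistent. Once these cases are settled, both inclusions are immediate and the theorem follows.
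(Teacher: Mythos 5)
Your proof is correct and follows essentially the same route as the paper: Theorem~\ref{thm:repr3d} reduces the statement to the equivalence between having a 2-basis and being a cut space, which the paper proves as Proposition~\ref{prop:2basisgraphs} using exactly your construction (vertex stars in one direction; one vertex per basis vector plus an auxiliary vertex, with coordinates realized as edges between basis vertices, edges to the auxiliary vertex, or loops, in the other). If anything, your write-up is slightly more careful than the paper's, which cites the star-generation fact from Diestel and glosses over the point that in a disconnected graph one must delete one star per component to obtain a linearly independent set.
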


It is well known that every two dimensional simplicial complex can be embedded into $\mathbb{R}^5$. Hence, every binary linear code has a geometric representation embeddable into $\mathbb{R}^5$. We further show:
\begin{thm}
\label{thm:repr4d}
Every binary linear code $\mathcal{C}$ has a geometric representation embeddable into $\mathbb{R}^4$.
\end{thm}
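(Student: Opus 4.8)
The plan is to improve the trivial $\mathbb{R}^5$ bound (every two-dimensional complex embeds into $\mathbb{R}^5$) by exhibiting, for each code, a geometric representation whose underlying triangular configuration has the special shape of a graph with triangulated disks attached, and then to embed such a configuration into $\mathbb{R}^4$ by placing the graph in a hyperplane and pushing the disks off into the two complementary half-spaces. I would deliberately not attempt to embed an arbitrary representation: the $2$-skeleton of the $6$-simplex is a triangular configuration that does not embed into $\mathbb{R}^4$, so the freedom to \emph{choose} the representation, guaranteed by Theorem~\ref{thm:repr1}, is essential.

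As a guiding special case I would first treat the suspension $\Sigma G$ of a graph $G$: its vertices are $V(G)$ together with two apices $a,b$, and for every edge $uv$ of $G$ it contains the two triangles $auv$ and $buv$. A short computation with even subsets shows that a base edge $uv$ forces both or neither of $auv,buv$ into a cycle, and that the apex edges then force the chosen edge set of $G$ to have all degrees even; hence $\ker\Sigma G$ over $GF(2)$ is exactly the cycle space of $G$. Placing $G$ in the hyperplane $\{x_4=0\}\cong\mathbb{R}^3$ (every graph embeds into $\mathbb{R}^3$), coning it to an apex $a$ with $x_4=1$ and to an apex $b$ with $x_4=-1$ embeds $\Sigma G$ into $\mathbb{R}^4$, because the cone over an embedded graph from a point off the hyperplane is again an embedding. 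This already realizes every graphic code in $\mathbb{R}^4$ and isolates the mechanism I want to use in general.

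For an arbitrary code I would reduce the representation supplied by Theorem~\ref{thm:repr1} to a normal form ``graph plus attached disks'': a one-dimensional skeleton $G$ together with triangulated disks, each attached along a closed walk of $G$, whose boundary cycles encode the generators of $\mathcal{C}$. This should be arrangeable by introducing auxiliary triangles that are later punctured away and rerouting within the hyperplane, chosen so that the represented code $\mathcal{C}$ and the equality $\dim\mathcal{C}=\dim\ker\Delta$ are preserved. I would then embed $G$ in $\{x_4=0\}$ and attach each disk by coning its boundary cycle to an apex with positive fourth coordinate; a single disk embeds because it is the cone over an embedded circle, and it meets the skeleton only along its boundary.

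The main obstacle is making the disk interiors pairwise disjoint: two disks whose projections to $\mathbb{R}^3$ overlap must be separated using the fourth coordinate, and for a fixed complex a globally consistent separation need not exist, since in the borderline range ``$2$ into $4$'' the van Kampen obstruction is not automatically trivial. Here I would use the constructive freedom rather than fight the obstruction head-on: whenever two disks are forced to cross, I reroute the offending part of the skeleton inside the hyperplane and insert auxiliary punctured triangles, so that the disks can be linearly ordered by their fourth coordinate and nested without interference. The crux of the proof is to show that this local repair can always be carried out while keeping the cycle space unchanged, so that the complex actually built never contains a van Kampen--Flores-type obstruction and therefore embeds into $\mathbb{R}^4$.
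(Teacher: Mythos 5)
Your suspension computation and the two-apex embedding of $\Sigma G$ into $\mathbb{R}^4$ are correct, but they only cover cycle spaces of graphs; for a general code your argument rests on two steps that you describe but do not carry out, and the second is a genuine gap rather than a deferred routine verification. First, the reduction of the representation supplied by Theorem~\ref{thm:repr1} to the normal form ``graph plus disks attached along closed walks'' is only asserted (``this should be arrangeable''), with no construction preserving $\dim\mathcal{C}=\dim\ker\Delta$; note also that puncturing cannot help with embeddability, because punctured coordinates are deleted from the code, not from the complex --- the corresponding triangles still have to be embedded. Second, and decisively, the pairwise disjointness of the disks is exactly the hard part, and you explicitly leave it open (``the crux of the proof is to show that this local repair can always be carried out''). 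There is a structural reason your setup makes this hard: you place the skeleton $G$ in the hyperplane $\{x_4=0\}$, and a $3$-flat in $\mathbb{R}^4$ has only two complementary sides (you even use only one of them), so there are essentially two ``pages'' available for all the cones; since the disks are $2$-dimensional objects in $\mathbb{R}^4$, two of them generically intersect in isolated points and cannot be pushed apart by small perturbations. An iterative ``reroute and repair'' scheme would need a termination argument and an invariant showing the cycle space is unchanged; neither is given, and this is precisely where the van Kampen obstruction you invoke would have to be confronted head-on.

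The paper sidesteps the obstruction entirely by making the part shared between gadgets $2$-dimensional rather than $3$-dimensional. The only triangles common to the gadgets of different basis vectors are the $n$ pairwise disjoint coordinate triangles $B^n_1,\dots,B^n_n$, all lying in one $2$-dimensional facet $F$; the entire gadget of a basis vector $b_i$ (the sphere $S^n$ with tunnels to the triangles $B^n_k$ with $b_i^k=1$, i.e., the same machinery as the $\mathbb{R}^3$ construction of Section~\ref{sec:repr3d}) is placed inside its own $3$-dimensional cube glued to $F$. Because $F$ is $2$-dimensional, its orthogonal complement in $\mathbb{R}^4$ is $2$-dimensional, so one can pick pairwise linearly independent normal vectors $v_1,\dots,v_d$ and obtain $d$ cubes meeting exactly in $F$ and otherwise disjoint (Proposition~\ref{prop:cubes4d}); disjointness of the gadgets is then automatic, no obstruction theory is needed, and the proof that the result represents $\mathcal{C}$ is word-for-word the argument of Section~\ref{sec:represents}. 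If you want to rescue your plan, this is the repair: force your spine down to dimension $2$ (disjoint coordinate triangles in a $2$-flat, rather than a possibly non-planar graph in a $3$-flat), so that a whole circle of normal directions becomes available, one per basis vector, instead of just two half-spaces.
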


Theorem~\ref{thm:repr4d} extends a main result of Rytíř~\cite{rytir2} where it is shown that every binary linear code has a geometric representation. 
\begin{cor}
\label{cor:dec3d}
There is a polynomial algorithm that decides the minimal dimension of a geometric representation of a binary code $\mathcal{C}$.
\end{cor}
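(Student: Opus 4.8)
The plan is to reduce the problem to a clean dichotomy. The key observation is that, by Theorem~\ref{thm:repr4d}, \emph{every} binary code admits a geometric representation embeddable into $\mathbb{R}^4$, so the minimal dimension is always at most $4$. I would first argue that for any code $\mathcal{C}$ with $\dim\mathcal{C}\geq 1$ the minimal dimension cannot drop below $3$, so that it is always either $3$ or $4$. Granting this, Theorem~\ref{thm:repr3d} tells us exactly when it is $3$: precisely when $\mathcal{C}$ has a $2$-basis. Hence the whole corollary reduces to deciding, in polynomial time, whether $\mathcal{C}$ has a $2$-basis, outputting $3$ if it does and $4$ otherwise.

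For the lower bound I would use a short topological argument. A geometric representation lives on a $2$-dimensional triangular configuration $\Delta$ with $\dim\ker\Delta=\dim\mathcal{C}\geq 1$, so $\Delta$ carries a nonzero cycle, i.e.\ a nonempty even subset of triangles. Since $\Delta$ has no $3$-simplices, its cycle space over $GF(2)$ coincides with the second homology $H_2(\Delta;GF(2))$. If $\Delta$ could be embedded into $\mathbb{R}^2$, then its underlying space would be a compact subset of the plane, and every compact planar set has vanishing second homology (for instance by Alexander duality in $S^2$); moreover a single triangle already forces dimension at least $2$. Thus a configuration with a nonzero cycle cannot be embedded into $\mathbb{R}^2$ or lower, giving minimal dimension at least $3$. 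Combined with Theorem~\ref{thm:repr4d}, the minimal dimension is $3$ or $4$, with $3$ attained exactly when a $2$-basis exists by Theorem~\ref{thm:repr3d}.

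It then remains to test the $2$-basis property in polynomial time, and here I would invoke Theorem~\ref{thm:repr3dgraphs}: $\mathcal{C}$ has a $2$-basis if and only if $\mathcal{C}$ is the cut space of some graph $G$. The constructive content is transparent in one direction: the stars of the vertices of $G$ generate its cut space, and since each edge has exactly two endpoints, every coordinate is nonzero in at most two of these generators, so the stars form a $2$-basis. For the decision problem I would pass to matroids: $\mathcal{C}$ is the cut space of a graph precisely when the binary matroid represented by a generator matrix of $\mathcal{C}$ is graphic, since the vertex--edge incidence matrix of $G$ is exactly such a generator matrix and its column matroid is the cycle matroid of $G$. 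Recognizing whether a binary matroid given by a matrix is graphic is a classical polynomially solvable problem, which yields the desired polynomial test.

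The main obstacle is the polynomial testability of the $2$-basis property; everything else is a short reduction. Two routes are available. One is to cite the polynomial recognition of graphic matroids from a binary representation and combine it with Theorem~\ref{thm:repr3dgraphs}. The other, more self-contained, is to turn a hypothetical $2$-basis directly into a graph, introducing one vertex per basis vector and one edge per coordinate, joining the at most two vertices whose basis vectors are nonzero in that coordinate, and then to verify in polynomial time that the cut space of the resulting graph equals $\mathcal{C}$; the delicate point is handling coordinates nonzero in fewer than two basis vectors and checking that the reconstructed graph has the correct cut-space dimension. Either route completes the algorithm: compute a basis of $\mathcal{C}$, test the $2$-basis (equivalently, graphic) condition, and report $3$ or $4$ accordingly.
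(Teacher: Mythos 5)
Your proposal is correct and takes essentially the same route as the paper: the paper's entire proof is the reduction you describe --- by Theorems~\ref{thm:repr3d}, \ref{thm:repr3dgraphs} and \ref{thm:repr4d} the minimal dimension is $3$ exactly when $\mathcal{C}$ is the cut space of a graph and $4$ otherwise, and the paper then simply cites Seymour~\cite{seymourgraphic} for the polynomial-time recognition of cut spaces (graphic matroids). The only material you add is the explicit topological lower bound (via vanishing second homology of planar compacta) showing the dimension cannot drop below $3$ when $\dim\mathcal{C}\geq 1$, a point the paper leaves implicit.
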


This positive result complements the results of Matoušek~et~al.~\cite{matousek1} on embeddings of simplicial complexes.

\section{Proof of main results}
\begin{proof}[Proof of Theorem~\ref{thm:repr3d}]
The necessary condition of the theorem follows from Theorem~\ref{thm:embhas2bas}. The sufficient condition is proven in Section~\ref{sec:repr3d}.
\end{proof}

\subsection{Bases of triangular configurations embedded into $\mathbb{R}^3$}
\label{sec:embhas2bas}
In this section we suppose that all triangular configurations are embedded into $\mathbb{R}^3$ with the standard Euclidean metric $\rho(x,y):=\sqrt{\sum_{i=1}^3(x_i-y_i)^2}$. Let $x$ be an element of $\mathbb{R}^3$ and let $\epsilon\in\mathbb{R}$ and $\epsilon>0$. The $\epsilon$-neighborhood of $x$ is the set $N_\epsilon(x):=\{y\in\mathbb{R}^3\,\vert\, \rho(x,y)<\epsilon\}$.
If no confusion can arise we let $N_\epsilon(x)=N(x)$.
Let $(x_1,\dots,x_m)$ be a sequence points in a space. A \dfn{polygonal path} along the sequence $(x_1,\dots,x_m)$ is a sequence of line segments connecting the consecutive points.
Let $\Delta$ be a triangular configuration embedded into $\mathbb{R}^3$.
A \dfn{cell} $X$ of $\Delta$ is a non-empty maximal subset of $\mathbb{R}^3\setminus\Delta$ with respect to inclusion such that between any two points of $X$ there is a polygonal path that does not intersect $\Delta$.
A \dfn{bounded cell} is a cell that is contained in some sphere of a finite diameter.
A \dfn{strong boundary} is a triangular configuration $C$ such that $C$ has at least two cells and every subconfiguration $C'$ has fewer cells than $C$. An example is depicted in Figure~\ref{fig:strongboundary}.
\begin{figure}
 \centering
 \includegraphics{./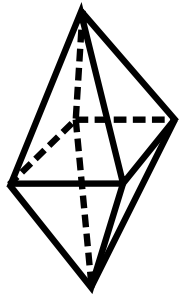}
 \caption{An example of strong boundary in $\mathbb{R}^3$}
 \label{fig:strongboundary}
\end{figure}
A one dimensional counterpart of strong boundary is a polygon, for example see Figure~\ref{fig:polygon}.
\begin{figure}
 \centering
 \includegraphics{./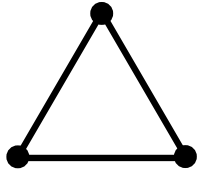}
 \caption{A polygon, counterpart of strong boundary in $\mathbb{R}^2$}
 \label{fig:polygon}
\end{figure}
Let $X$ be a subset of $\mathbb{R}^3$. The closure of $X$, denoted by $cl(X)$, is the set $cl(X):=\{y\in\mathbb{R}^3\,\vert\, \forall\epsilon>0 ; N_\epsilon(y)\cap X\neq\emptyset\}$.
We say that a triangle $t$ is \dfn{incident} with a cell $S$ if $t\subseteq cl(S)$.

\begin{prop}
\label{prop:trianglecells}
\label{prop:twocells}
Let $\Delta$ be a triangular configuration embedded into $\mathbb{R}^3$. Then every triangle $t$ of $\Delta$ is incident with at least one cell of $\Delta$ and at most two cells of $\Delta$.
\end{prop}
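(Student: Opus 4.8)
The plan is to reduce everything to a local analysis at a point in the relative interior of $t$, where the simplicial-complex structure forces $\Delta$ to look flat. The key local lemma I would establish first is: if $x$ lies in the relative interior of $t$, then there is $\epsilon>0$ with $N_\epsilon(x)\cap\Delta=N_\epsilon(x)\cap t$. The reason is that any simplex $\sigma\in\Delta$ other than $t$ meets $t$ in a common face, and the only face of $t$ whose relative interior contains $x$ is $t$ itself; hence $x\notin\sigma$ for every $\sigma\neq t$. Since $\Delta$ is finite and each $\sigma$ is compact, $\min_{\sigma\neq t}\rho(x,\sigma)>0$, and any smaller $\epsilon$ works. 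For such $\epsilon$ the set $N_\epsilon(x)\cap t$ is a flat open disk through the centre of the ball, so $N_\epsilon(x)\setminus\Delta$ splits into exactly two connected pieces, the open half-balls $H^+(x)$ and $H^-(x)$ on the two sides of the plane of $t$.

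For the upper bound I would argue that every cell $S$ incident with $t$ must meet $N_\epsilon(x)$: indeed $t\subseteq cl(S)$ forces $x\in cl(S)$, so $S$ comes arbitrarily close to $x$. Since $S\subseteq\mathbb{R}^3\setminus\Delta$, we get $\emptyset\neq S\cap N_\epsilon(x)\subseteq H^+(x)\cup H^-(x)$. Each $H^\pm(x)$ is connected and disjoint from $\Delta$, so it lies inside a single cell; and because the cells are the connected components of $\mathbb{R}^3\setminus\Delta$, if $S$ meets $H^+(x)$ (resp.\ $H^-(x)$) then $H^+(x)\subseteq S$ (resp.\ $H^-(x)\subseteq S$). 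Thus every incident cell contains one of the two fixed sets $H^+(x),H^-(x)$, and since distinct cells are disjoint, there are at most two of them.

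For the lower bound I would show that a single cell already has all of $t$ in its closure. Fix the side giving $H^+$. For any two points $x,y$ in the relative interior of $t$, the points just above them can be joined by a polygonal path hovering slightly over the segment $xy\subseteq\operatorname{relint}t$; applying the local lemma along this segment shows the path avoids $\Delta$, so all the half-balls $H^+(z)$, $z\in\operatorname{relint}t$, lie in one common cell $S^+$. Consequently $\operatorname{relint}t\subseteq cl(S^+)$. Finally, since $cl(S^+)$ is closed and the relative interior of a triangle is dense in it, $t=cl(\operatorname{relint}t)\subseteq cl(S^+)$, so $t$ is incident with $S^+$.

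The step I expect to be the main obstacle is this connectedness argument for the lower bound: getting the whole triangle (boundary edges and vertices included) into one cell's closure, rather than just individual interior points scattered among possibly different cells. The resolution is to keep everything on one side via the hovering-path construction so as to land in a single cell $S^+$, and only then invoke closedness of $cl(S^+)$ to sweep in the boundary of $t$ for free; covering the boundary directly would be awkward, because near an edge or a vertex the local picture of $\Delta$ need not be flat.
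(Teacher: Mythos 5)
Your proof is correct. For the upper bound it is essentially the paper's argument made more careful: both of you zoom in on a point in the relative interior of $t$, split a small ball into two sides of the plane of $t$, and conclude that at most two cells can meet it. In fact your local lemma ($N_\epsilon(x)\cap\Delta=N_\epsilon(x)\cap t$) is the statement the paper actually needs but does not quite establish: the paper only requires its neighborhood $N(p)$ to avoid the \emph{edges} of $\Delta$, which as literally stated does not exclude other triangles passing through $N(p)$; your derivation from the simplicial-complex intersection property plus finiteness closes that gap. Where you genuinely diverge is the lower bound. The paper erects a convex tetrahedral tent $P^{+}_{\epsilon}$ over the whole closed triangle (convex hull of the three vertices of $t$ and an apex $p+\epsilon v$), chooses $\epsilon$ so small that $\Delta\cap P^{+}_{\epsilon}=t$, and then gets all of $t$, boundary included, into $cl(X^{+})$ in one shot, since $P^{+}_{\epsilon}\setminus t$ is convex, disjoint from $\Delta$, and has $t$ in its closure. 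You instead join the upper half-balls $H^{+}(z)$ over points $z$ of the relative interior by hovering polygonal paths, obtain that the relative interior of $t$ lies in $cl(S^{+})$, and only then sweep in the boundary of $t$ by closedness and density. The trade-off is real: the paper's tent avoids your final density step but silently assumes the existence of a suitable $\epsilon$, which requires controlling the dihedral angles of triangles attached along the edges and vertices of $t$ (true by finiteness, but asserted without proof); your route never leaves the relative interior, where the local picture is trivially flat, so the compactness justification is elementary, at the cost of one extra topological observation at the end. Both arguments are sound, and yours is arguably the more rigorous of the two.
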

\begin{proof}
Let $t$ be a triangle of $\Delta$. For a contradiction, suppose that $t$ is incident with three cells $X_1,X_2,X_3$ of $\Delta$.
Let $p$ a point of $t$ that does not belong to any edge of $t$.
It holds that $p\in cl(X_1)$, $p\in cl(X_2)$ and $p\in cl(X_3)$.
Let $N(p)$ be a neighborhood of $p$ such that $N(p)$ does not intersect an edge of $\Delta$. The neighborhood $N(p)$ intersects the cells $X_1,X_2,X_3$. Let $x_1,x_2,x_3$ be points of $cl(X_1)\cap N(p)$, $cl(X_2)\cap N(p)$, $cl(X_3)\cap N(p)$, respectively. Then, the segments $x_1x_2$, $x_2x_3$, $x_1x_3$ intersect triangle $t$. Let $H$ be a hyperplane of $\mathbb{R}^3$ that contains triangle $t$. Then two points of $x_1,x_2,x_3$ belong to the same half-space defined by $H$.
The segment connecting these two points do not intersect $t$. This is the contraction.
Hence, $t$ is incident with at most two cells of $\Delta$.

Now, we show that $t$ is incident with at least one cell.
Let $v_1,v_2,v_3$ be vertices of $t$ and let $p$ be a point of $t$ that belongs to no edge of $\Delta$.
Let $v$ be a vector orthogonal to triangle $t$ and let $\epsilon>0$.
Let $P^+_\epsilon$ be a convex hull of set $\{v_1,v_2,v_3,p+\epsilon v\}$ and let $P^-_\epsilon$ be a convex hull of set $\{v_1,v_2,v_3,p-\epsilon v\}$.
We choose $\epsilon>0$ sufficiently small such that $\Delta\cap P^+_\epsilon=t$ and $\Delta\cap P^-_\epsilon=t$ .
The sets $P^+_\epsilon\setminus t$ and $P^-_\epsilon\setminus t$ are convex and disjoint with $\Delta$. Thus, $P^+_\epsilon\setminus t$ is a part of one cell of $\Delta$. Let $X^+$ be the cell of $\Delta$ that contains $P^+_\epsilon\setminus t$. Clearly $t\subseteq cl(X^+)$. Thus, triangle $t$ is incident with at least cell $X^+$.


\end{proof}

\begin{cor}
\label{cor:boundtricell}
Let $C$ be a strong boundary embedded into $\mathbb{R}^3$. Then every triangle $t$ of $C$ is incident with two cells of $C$.
\end{cor}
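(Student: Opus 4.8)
The plan is to combine Proposition~\ref{prop:twocells} with the defining minimality property of a strong boundary. By Proposition~\ref{prop:twocells}, every triangle $t$ of $C$ is incident with at least one and at most two cells, so it suffices to rule out the case that some triangle $t$ is incident with exactly one cell. I would argue by contradiction: assuming such a $t$ exists, I will exhibit a proper subconfiguration of $C$ having the same number of cells as $C$, contradicting the requirement that every proper subconfiguration of a strong boundary has strictly fewer cells.

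First I would set up the subconfiguration $C':=\mathcal{K}(T(C)\setminus\{t\})$ obtained by deleting the triangle $t$; its edges may survive as segments, but this is irrelevant to the count of cells, since one-dimensional simplices do not separate $\mathbb{R}^3$. Because $C'\subseteq C$ as point sets, every polygonal path avoiding $C$ also avoids $C'$, so each cell of $C$ is contained in a unique cell of $C'$, and conversely each cell of $C'$ is a union of one or more cells of $C$, possibly together with the relative interior of $t$. In particular the number of cells can only stay the same or decrease when passing from $C$ to $C'$, and two cells of $C$ get merged in $C'$ precisely when one can pass between them through the newly opened hole in the relative interior of $t$.

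Next I would carry out the local analysis at a generic point $p$ in the relative interior of $t$, reusing the construction of $P^+_\epsilon$ and $P^-_\epsilon$ from the proof of Proposition~\ref{prop:twocells}. The two caps $P^+_\epsilon\setminus t$ and $P^-_\epsilon\setminus t$ lie in the two cells $X^+$ and $X^-$ incident to $t$; the hypothesis that $t$ is incident with exactly one cell means precisely that $X^+=X^-=:X$. Thus both local sides of $t$ already belong to the same cell $X$ of $C$. Since the only new connections created by deleting $t$ must pass through the hole at $t$, and this hole joins $X$ to itself, no two distinct cells of $C$ are merged in $C'$. Consequently $C'$ has exactly the same number of cells as $C$.

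Finally, since $C'$ is a proper subconfiguration of the strong boundary $C$, its number of cells must be strictly smaller than that of $C$, contradicting the equality just established. Hence no triangle of $C$ is incident with exactly one cell, and together with Proposition~\ref{prop:twocells} this shows that every triangle of $C$ is incident with exactly two cells. I expect the main obstacle to be the step establishing that deleting a triangle incident with only one cell leaves the number of cells unchanged, that is, making rigorous that the only identifications induced by removing $t$ run through the hole at $t$ and that this hole connects the cell $X$ to itself rather than to a distinct cell.
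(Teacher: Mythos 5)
Your proof is correct and follows essentially the same route as the paper: invoke Proposition~\ref{prop:twocells} to get at most two incident cells, then rule out the one-cell case by deleting $t$ and contradicting the minimality clause in the definition of a strong boundary. The only difference is one of detail — you justify explicitly (via the caps $P^{+}_\epsilon$, $P^{-}_\epsilon$ and the observation that the hole at $t$ joins the cell $X$ to itself) why deleting such a triangle leaves the number of cells unchanged, a step the paper asserts in a single sentence without elaboration.
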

\begin{proof}
By Proposition~\ref{prop:twocells}, triangle $t$ is incident with one or two cells of $C$.
If $t$ is incident with one cell, we can remove it from $C$ and the number of cells of $C$ does not change. Thus, $C\setminus\{t\}$ is also a strong boundary. This contradict with the minimality of $C$. Hence, $t$ is incident with exactly two cells. 
\end{proof}

\begin{lem}
\label{lem:cellsjoin}
Let $\Delta$ be a triangular configuration embedded into $\mathbb{R}^3$. Let $t$ be a triangle of $\Delta$ incident with two cells of $\Delta$. Then the number of cells of $\Delta\setminus\{t\}$ is equal to the number of cells of $\Delta$ minus one.
\end{lem}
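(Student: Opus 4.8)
The plan is to compare the cells of $\Delta$ with those of $\Delta\setminus\{t\}$ by observing that, as subsets of $\mathbb{R}^3$, the two configurations differ only in the relative interior of $t$: removing $t$ from the complex deletes the open triangle but keeps its three edges, so $\mathbb{R}^3\setminus(\Delta\setminus\{t\})=(\mathbb{R}^3\setminus\Delta)\cup(\text{relint }t)$. Writing $X_1$ and $X_2$ for the two cells of $\Delta$ incident with $t$ (these exist and are the only incident cells by Proposition~\ref{prop:trianglecells} together with the hypothesis), I would first show that $X_1$ and $X_2$ merge into a single cell of $\Delta\setminus\{t\}$, and then show that every other cell of $\Delta$ survives unchanged. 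Combining the two gives a net drop of exactly one in the cell count.

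For the merging step, I take a point $p$ of $t$ lying on no edge of $\Delta$ and a neighbourhood $N(p)$ meeting no edge of $\Delta$, exactly as in the proof of Proposition~\ref{prop:trianglecells}. The hyperplane $H$ containing $t$ splits $N(p)\setminus t$ into two half-balls; by the ``at least one cell'' argument there, the points $p+\epsilon v$ and $p-\epsilon v$ (with $v$ orthogonal to $t$) lie in cells of $\Delta$, and since $X_1,X_2$ are the only cells with $t$ in their closure these half-balls lie in $X_1$ and $X_2$, which are distinct because $t$ is incident with two cells. In $\Delta\setminus\{t\}$ the point $p$ no longer belongs to the configuration, so the segment from $p+\epsilon v$ to $p-\epsilon v$ passing through $p$ avoids $\Delta\setminus\{t\}$; concatenating it with polygonal paths inside $X_1$ and inside $X_2$ shows that any point of $X_1$ and any point of $X_2$ lie in a common cell of $\Delta\setminus\{t\}$.

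For the ``no other change'' step, I would argue that the two sides of the relative interior of $t$ belong to $X_1$ and $X_2$ uniformly along the whole open triangle: the set of points $p+\epsilon(p)v$ taken just above the open triangle is connected and disjoint from $\Delta$, hence lies in a single cell, and likewise for the lower side. Consequently every crossing through the freed open triangle connects only $X_1$ to $X_2$, so no third cell can reach the new passage. Any cell $Y\neq X_1,X_2$ of $\Delta$ therefore remains a maximal path-connected subset of $(\mathbb{R}^3\setminus\Delta)\cup(\text{relint }t)$, i.e.\ an unchanged cell, while $X_1$, $X_2$ and the freed open triangle together form exactly one cell.

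I expect the main obstacle to be the uniformity claim in the last step, namely that the cell lying on a given side of $t$ is the same for all interior points of $t$ rather than varying from point to point. This requires exhibiting a single connected sheet hugging one side of the open triangle and disjoint from $\Delta$, which is what legitimises replacing the pointwise half-space analysis of Proposition~\ref{prop:trianglecells} by a global statement identifying the two sides with the two incident cells.
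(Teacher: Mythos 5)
Your proposal is correct and its core is the same as the paper's own proof: pick an interior point $p$ of $t$, note that in $\Delta\setminus\{t\}$ the segment through $p$ joining the two sides avoids the configuration, and conclude that $X_1$ and $X_2$ merge into the single cell $X_1\cup\operatorname{relint}(t)\cup X_2$. The paper's proof in fact stops at that merging step and simply asserts the conclusion; your additional ``no other change'' step --- the connected sheet $\{p+\epsilon(p)v\}$ hugging each side of the open triangle (take, e.g., $\epsilon(p)$ half the distance from $p$ to the other simplices, which is positive and continuous on the open triangle), showing the two sides lie globally in $X_1$ and $X_2$ so that no third cell can reach the freed passage --- is precisely the part the paper leaves implicit, and it is what justifies that the cell count drops by exactly one rather than more.
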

\begin{proof}
Let $X_1$ and $X_2$ be cells incident with $t$.
Let $x$ be a point of $t$.
Then there are points $x_1$ and $x_2$ of $X_1$ and $X_2$, respectively, such that $N(x_1)\cap x\neq\emptyset$ and $N(x_2)\cap x\neq\emptyset$. Hence, there is a polygonal path between $x_1$ and $x_2$ disjoint from $\Delta\setminus\{t\}$. The set $X_1\cup t\cup X_2$ is a cell of $\Delta\setminus\{t\}$ and the proposition follows.
\end{proof}

\begin{prop}
\label{prop:div2cells}
Let $C$ be a strong boundary embedded into $\mathbb{R}^3$.
Then $C$ has exactly two cells.
\end{prop}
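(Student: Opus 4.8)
The plan is to obtain the statement from a single deletion of a triangle, controlled by Lemma~\ref{lem:cellsjoin}, together with the minimality built into the definition of a strong boundary. Write $k$ for the number of cells of $C$, so $k\ge 2$ by definition. By Corollary~\ref{cor:boundtricell} every triangle $t$ of $C$ is incident with exactly two cells, and these two cells are necessarily distinct: if the same cell lay on both sides of $t$, then one could join the two local half-neighbourhoods of $t$ (as constructed in the proof of Proposition~\ref{prop:twocells}) by a polygonal path avoiding $C\setminus\{t\}$, so deleting $t$ would not change the number of cells, contradicting the minimality of $C$.

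With this in hand I would delete one triangle. Fix any $t\in T(C)$ and put $C':=C\setminus\{t\}$, a proper subconfiguration of $C$. Since $t$ is incident with two distinct cells, Lemma~\ref{lem:cellsjoin} applies and gives that $C'$ has exactly $k-1$ cells.

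It remains to pin this number down. Here I would use that a strong boundary is inclusion-minimal among separating configurations: every proper subconfiguration fails to separate $\mathbb{R}^3$ and therefore has a single cell. Applying this to $C'$ gives $k-1=1$, hence $k=2$. Equivalently, one may argue by contradiction: if $k\ge 3$ then $C'$ would have $k-1\ge 2$ cells, so the proper subconfiguration $C'$ would already separate $\mathbb{R}^3$ into several cells, contradicting the minimality of $C$.

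The main obstacle is precisely this last step, namely converting the defining minimality of a strong boundary into the concrete fact that removing one triangle destroys all separation, i.e.\ leaves a single cell. The natural route is to show that no proper subconfiguration of $C$ separates $\mathbb{R}^3$; granting this, the count $k-1$ supplied by Lemma~\ref{lem:cellsjoin} is forced to equal $1$. To establish it I would pass to an inclusion-minimal separating subconfiguration $C_0\subseteq C$, observe that Corollary~\ref{cor:boundtricell} and Lemma~\ref{lem:cellsjoin} apply verbatim to $C_0$ and already force $C_0$ to have exactly two cells, and then argue that the minimality in the hypothesis forces $C=C_0$, so that $C$ is itself a minimal separator and hence has exactly two cells.
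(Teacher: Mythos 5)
Your proof is correct and follows essentially the same route as the paper: combine Corollary~\ref{cor:boundtricell} (each triangle of $C$ is incident with exactly two cells) with Lemma~\ref{lem:cellsjoin} (deleting such a triangle merges those two cells), and then invoke the minimality in the definition of a strong boundary to force $C\setminus\{t\}$ to have only one cell, giving $k=2$. The paper phrases this as a contradiction---if $k\ge 3$ it extracts from $C\setminus\{t\}$ an inclusion-minimal subconfiguration with at least two cells, i.e.\ a smaller strong boundary inside $C$---which is exactly the same use of minimality you make directly, so the concern you raise in your final paragraph is resolved just as the paper resolves it.
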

\begin{proof}
For a contradiction suppose that $C$ has more than two cells.
Let $t$ be a triangle of $C$. By Corollary~\ref{cor:boundtricell}, triangle $t$ is incident with exactly two cells. By Lemma~\ref{lem:cellsjoin}, by removing $t$ from $C$, we join two cells into one.
If $C$ has more than two cells, the subconfiguration $C\setminus\{t\}$ has at least two cells.
Let $C'$ be the minimal subconfiguration (with respect to inclusion) of $C\setminus\{t\}$ that has at least two cells. Then $C'$ is a smaller strong boundary than $C$. This is a contradiction with the minimality of $C$.
\end{proof}

\begin{prop}
Let $C$ be a strong boundary embedded into $\mathbb{R}^3$.
Then one of the cells of $C$ is bounded and the second one is unbounded.
\end{prop}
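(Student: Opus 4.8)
The plan is to use the fact, established in Proposition~\ref{prop:div2cells}, that a strong boundary $C$ embedded into $\mathbb{R}^3$ has exactly two cells, and then argue that exactly one of these two cells is bounded. First I would observe that $C$ is a compact subset of $\mathbb{R}^3$, being a finite union of closed simplices (triangles and segments), and hence is contained in some ball $N_R(0)$ of finite radius $R$. Everything outside this ball lies in $\mathbb{R}^3\setminus C$, so I would like to show that the entire exterior region $\mathbb{R}^3\setminus N_R(0)$ belongs to a single cell, and that this cell is therefore unbounded. This is straightforward: any two points outside $N_R(0)$ can be joined by a polygonal path that stays outside $N_R(0)$ (for instance, move radially outward from each point and connect along a large sphere, or simply note that the complement of a ball in $\mathbb{R}^3$ is polygonally connected), and such a path never meets $C$. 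Hence all of the exterior lies in one cell $U$, which is unbounded since it contains points arbitrarily far from the origin.

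Next I would argue that the other cell is bounded. Since $C$ has exactly two cells by Proposition~\ref{prop:div2cells}, let $U$ be the unbounded cell constructed above and let $B$ be the remaining cell. I must show $B$ is bounded, i.e. contained in some sphere of finite diameter. Suppose for contradiction that $B$ is unbounded. Then $B$ contains points of arbitrarily large norm, in particular points lying outside $N_R(0)$. But every point outside $N_R(0)$ already lies in $U$, and distinct cells are disjoint (being maximal polygonally connected components of $\mathbb{R}^3\setminus C$), so $B$ cannot contain any such point — a contradiction. Therefore $B\subseteq N_R(0)$ and $B$ is bounded.

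Finally I would assemble these two observations. The two cells of $C$ are exactly $U$ and $B$; I have shown $U$ is unbounded and $B$ is bounded. This gives precisely the statement that one cell of $C$ is bounded and the other is unbounded.

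The main obstacle I anticipate is making the claim ``the complement of a large ball lies in a single cell'' fully rigorous, since the definition of a cell requires a \emph{polygonal} connecting path avoiding $C$, not merely an arbitrary arc. I would handle this by noting explicitly that $\mathbb{R}^3\setminus N_R(0)$ is itself polygonally connected (any two points outside the ball can be connected by at most two straight segments passing through a sufficiently distant point on the line through the origin, or by three segments routed around the ball), and that all such segments stay outside $N_R(0)\supseteq C$ and hence avoid $C$. Given this, the rest of the argument is a short disjointness-and-contradiction step that follows immediately from the maximality and disjointness of cells together with the exact count of two cells supplied by Proposition~\ref{prop:div2cells}.
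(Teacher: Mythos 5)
Your proof is correct and follows essentially the same route as the paper: invoke Proposition~\ref{prop:div2cells} for the two-cell count, note that $C$ is finite and hence contained in a large ball, observe that the complement of the ball lies in a single cell (which is therefore unbounded), and conclude that the remaining cell sits inside the ball and is bounded. Your write-up is simply a more careful version of the paper's argument, spelling out the polygonal connectivity of the ball's complement and the disjointness of cells, which the paper leaves implicit.
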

\begin{proof}
By proposition~\ref{prop:div2cells}, $C$ has two cells.
By definition, every triangular configuration $\Delta$ is finite. Thus, every strong boundary $C$ is finite.
Hence, $C$ is contained in a sufficiently large sphere $S$.
The complement of the ball of $S$ is contained in one cell of $C$, thus this cell is unbounded. The other cell of $C$ is inside this ball and thus it is bounded.

\end{proof}
Let $C$ be a strong boundary. We call the bounded cell of $C$ \dfn{inner cell} of $C$ and denote it by $\inter{C}$. The unbounded cell of $C$ we denote by $\ext{C}$. We denote $C\cup\inter{C}$ and $C\cup\ext{C}$ by $\overline{\inter{C}}$ and $\overline{\ext{C}}$, respectively.
So far we considered strong boundary as a triangular configuration in $\mathbb{R}^3$. Now we consider strong boundaries in a triangular configuration $\Delta$. We say that a strong boundary $C$ is a \dfn{strong boundary of $\Delta$} if $C$ is a subconfiguration of $\Delta$.
We say that a triangular configuration $\Delta$ is connected if every two triangles of $\Delta$ belong to a common strong boundary of $\Delta$.
The \dfn{connected component} of $\Delta$ is a maximal connected subconfiguration (under inclusion) of $\Delta$.

\begin{prop}
\label{prop:closureboundary}
Let $C$ be a strong boundary.
Then $cl(\inter{C})=\overline{\inter{C}}$ and $cl(\ext{C})=\overline{\ext{C}}$.
\end{prop}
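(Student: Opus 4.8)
The plan is to prove each of the two set equalities by a pair of opposite inclusions, exploiting the partition $\mathbb{R}^3 = C \cup \inter{C} \cup \ext{C}$ (disjoint as point sets), where by Proposition~\ref{prop:div2cells} these are the only two cells of $C$, one bounded and one unbounded. I would prove $cl(\inter{C}) = \overline{\inter{C}}$ in full and then obtain $cl(\ext{C}) = \overline{\ext{C}}$ by the symmetric argument with the roles of the inner and outer cell exchanged.

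For the inclusion $\overline{\inter{C}} \subseteq cl(\inter{C})$ the easy half is $\inter{C} \subseteq cl(\inter{C})$, so the real content is to show $C \subseteq cl(\inter{C})$, i.e.\ that every point of the configuration is a limit of points of the inner cell. Here I would invoke Corollary~\ref{cor:boundtricell}: every triangle $t$ of $C$ is incident with two cells, and since the only cells are $\inter{C}$ and $\ext{C}$, the definition of incidence forces $t \subseteq cl(\inter{C})$ (and likewise $t \subseteq cl(\ext{C})$). It then remains to argue that, as a point set, $C$ is exactly the union of its triangles; this reduces to showing that a strong boundary has no maximal segment, since otherwise every simplex is a face of some triangle and hence contained in it. To rule out a maximal segment $e$ I would note that removing a $1$-dimensional segment from $C$ cannot change the number of cells (a segment has codimension $2$ in $\mathbb{R}^3$, so its complement is locally connected and a polygonal path can be routed around it), whence $C \setminus \{e\}$ would have the same number of cells as $C$, contradicting the minimality built into the definition of strong boundary. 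With no dangling segments we get $C = \bigcup_{t \in T(C)} t \subseteq cl(\inter{C})$.

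For the reverse inclusion $cl(\inter{C}) \subseteq \overline{\inter{C}}$ I would show $cl(\inter{C})$ contains no point of $\ext{C}$. The key observation is that every cell is open: any point $y \notin C$ admits a ball $N_\epsilon(y)$ disjoint from the closed set $C$, and this convex ball is polygonally connected, so it lies entirely in the single cell containing $y$. Applied to $y \in \ext{C}$ this gives a neighbourhood of $y$ contained in $\ext{C}$ and therefore disjoint from $\inter{C}$, so $y \notin cl(\inter{C})$. Since $\mathbb{R}^3 = C \cup \inter{C} \cup \ext{C}$ disjointly, this yields $cl(\inter{C}) \subseteq C \cup \inter{C} = \overline{\inter{C}}$.

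Combining the two inclusions gives $cl(\inter{C}) = \overline{\inter{C}}$, and the identical argument with $\inter{C}$ and $\ext{C}$ interchanged gives $cl(\ext{C}) = \overline{\ext{C}}$. I expect the main obstacle to be the forward inclusion, specifically justifying that the entire point set $C$ — not merely the relative interiors of its triangles — lies in $cl(\inter{C})$; this is where the no-maximal-segment argument and the reduction of incidence to the containment $t \subseteq cl(\inter{C})$ via Corollary~\ref{cor:boundtricell} do the real work, whereas the reverse inclusion is a routine openness argument.
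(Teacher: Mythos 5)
Your proof is correct and follows the paper's own argument: the core step in both is Corollary~\ref{cor:boundtricell} together with the definition of incidence, which gives $C\subseteq cl(\inter{C})\cap cl(\ext{C})$ and hence the forward inclusions. The extra work you do --- ruling out maximal segments via the minimality in the definition of strong boundary, and the openness-of-cells argument for the reverse inclusion $cl(\inter{C})\subseteq\overline{\inter{C}}$ --- only fills in details that the paper's two-sentence proof leaves implicit.
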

\begin{proof}
By Corollary~\ref{cor:boundtricell}, every triangle of $C$ is incident exactly with two cells $\inter{C}$ and $\ext{C}$. By definition of incidence, it holds $cl(\inter{C})=\overline{\inter{C}}$ and $cl(\ext{C})=\overline{\ext{C}}$.
\end{proof}

\begin{figure}
 \centering
 \includegraphics{./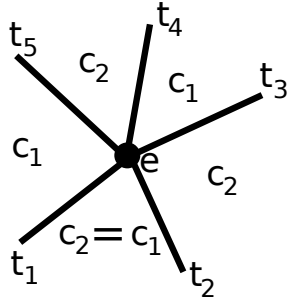}
\caption{A contradicting example of an edge $e$ of a strong boundary. The boundary has two cells $c_1,c_2$. The edge $e$ is incident with triangles $t_1,\dots,t_5$ and it has odd Degree 5. Then the strong boundary has only one cell. This contradict the definition of strong boundary.}
 \label{fig:odddegree}
\end{figure}

\begin{prop}
\label{prop:boundeven}
Let $C$ be a strong boundary embedded into $\mathbb{R}^3$.
Then the set of triangles of $C$ is an even subset.
\end{prop}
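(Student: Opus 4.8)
The plan is to prove that every edge $e$ of the configuration $\mathcal{K}(T(C))$—that is, every edge lying in some triangle of $C$—has even degree, which is exactly the assertion that $T(C)$ is an even subset. Since by Proposition~\ref{prop:div2cells} the strong boundary $C$ has exactly two cells $\inter{C}$ and $\ext{C}$, the whole argument reduces to a parity count of how the triangles around $e$ partition a small transversal disk between these two cells.

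First I would fix an edge $e$ of some triangle of $C$ and a point $p$ in the relative interior of $e$, so that $p$ lies on no vertex and, after shrinking a neighborhood $N(p)$, on no other edge of $C$. Let $D\subseteq N(p)$ be a small two dimensional disk centered at $p$ and orthogonal to $e$. Each triangle of $C$ containing $e$ meets $D$ in a radius emanating from $p$, and distinct triangles give distinct radii, since two triangles of a simplicial complex sharing the edge $e$ cannot overlap in a two dimensional region. Thus the $\deg(e)$ triangles through $e$ cut $D$ into $\deg(e)$ open sectors, and because $N(p)$ meets $C$ only in these radii, each sector lies entirely inside a single cell of $C$, hence inside $\inter{C}$ or inside $\ext{C}$.

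The key step is to show that the two sectors adjacent to any one radius lie in \emph{different} cells. Let $t$ be the triangle giving that radius. By Corollary~\ref{cor:boundtricell}, $t$ is incident with exactly two cells, namely $\inter{C}$ and $\ext{C}$. The relative interior of $t$ is connected and, away from its edges, disjoint from the rest of $C$; hence the cell lying on a given local side of $t$ is the same for every interior point of $t$, and the two sides of $t$ realize precisely the two cells incident with $t$. In particular the two sectors flanking the radius of $t$ at $p$ belong one to $\inter{C}$ and one to $\ext{C}$. Consequently, crossing any radius along $\partial D$ toggles the cell between $\inter{C}$ and $\ext{C}$.

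Finally I would run the parity argument. Traversing the boundary circle $\partial D$ once is a closed loop that crosses each of the $\deg(e)$ radii exactly once and returns to its starting sector, hence to its starting cell. Since every crossing toggles between the two cells, the number of crossings must be even, so $\deg(e)$ is even; as $e$ was arbitrary, $T(C)$ is an even subset. The main obstacle is precisely the key step of ruling out that both sectors adjacent to a radius lie in the same cell. This is where the strong boundary hypothesis is essential, via Corollary~\ref{cor:boundtricell}: for a general triangular configuration a triangle may be incident with only one cell, in which case the crossing need not toggle and the parity conclusion would fail.
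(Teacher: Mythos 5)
Your proof is correct and follows essentially the same route as the paper: both derive the evenness of $\deg(e)$ from the fact that $C$ has exactly two cells (Proposition~\ref{prop:div2cells}) and every triangle of $C$ is incident with both of them (Corollary~\ref{cor:boundtricell}), so that the sectors around the edge $e$ must alternate between $\inter{C}$ and $\ext{C}$, forcing even degree. The paper only sketches this alternation argument with a picture, whereas you carry out the transversal-disk and toggling details rigorously, but the underlying idea is identical.
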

\begin{proof}
For a contradiction suppose that a strong boundary $C$ contains an edge $e$ with an odd degree.
By Proposition~\ref{prop:div2cells}, $C$ has two cells.
By Corollary~\ref{cor:boundtricell}, every triangle of $C$ is incident with two cells.
Let $T$ be the set of triangles incident with $e$.
Since edge $e$ has an odd degree and every triangle of $T$ is incident with two cells,
we set a contradiction.
(see Figure~\ref{fig:odddegree}).
\end{proof}

\subsubsection{Elementary strong boundaries}
A strong boundary $C$ of $\Delta$ is \dfn{elementary} if there is no strong boundary $C'$ of $\Delta$ such that $\inter{C}\cap\inter{C'}\neq\emptyset$ and $\inter{C}\cap\ext{C'}\neq\emptyset$.
First, we illustrate this definition on one dimensional simplicial complexes embedded into $\mathbb{R}^2$. One dimensional simplicial complexes embedded into $\mathbb{R}^2$ correspond to planar embeddings of planar graphs. The graphs counterpart of our definition of elementary strong boundary is a boundary of a face of a 2-connected plane graph. The 2-connected plane graph depicted in Figure~\ref{fig:graphelementarycircuits} has two boundaries of faces (elementary strong boundaries) depicted in Figure~\ref{fig:graphelementarycircuits2} and one circuit (strong boundary) that is not a boundary of a face (elementary strong boundary) depicted in Figure~\ref{fig:graphelementarycircuits3}.

\begin{figure}
 \centering
 \includegraphics{./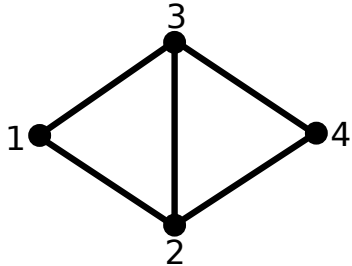}
 \caption{One dimensional complex embedded into $\mathbb{R}^2$ (a plane graph).}
 \label{fig:graphelementarycircuits}
\end{figure}
\begin{figure}
 \centering
 \includegraphics{./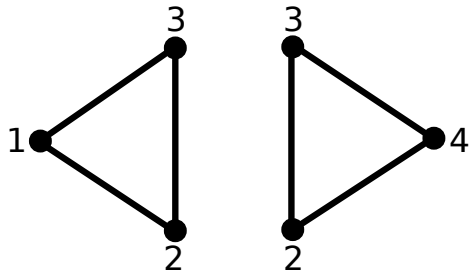}
 \caption{Two elementary strong boundaries of the complex in Figure~\ref{fig:graphelementarycircuits}.}
 \label{fig:graphelementarycircuits2}
\end{figure}
\begin{figure}
 \centering
 \includegraphics{./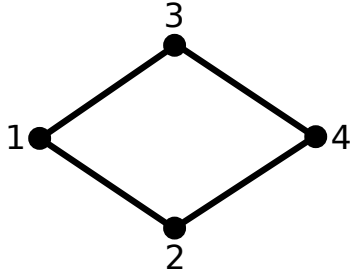}
 \caption{This strong boundary of the complex in Figure~\ref{fig:graphelementarycircuits} is not elementary.}
 \label{fig:graphelementarycircuits3}
\end{figure}

Now, we give example of triangular configuration embedded into $\mathbb{R}^3$ with two elementary strong boundaries. The triangular configuration in Figure~\ref{fig:elementaryboundary} has two elementary strong boundaries (Figure~\ref{fig:elementaryboundary2}) and one strong boundary that is not elementary (Figure~\ref{fig:elementaryboundary3}).

\begin{figure}
 \centering
 \includegraphics{./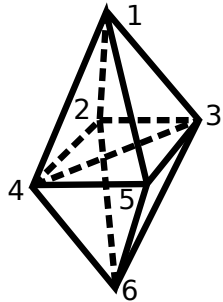}
 \caption{Triangular configuration embedded into $\mathbb{R}^3$.}
 \label{fig:elementaryboundary}
\end{figure}

\begin{figure}
 \centering
 \includegraphics{./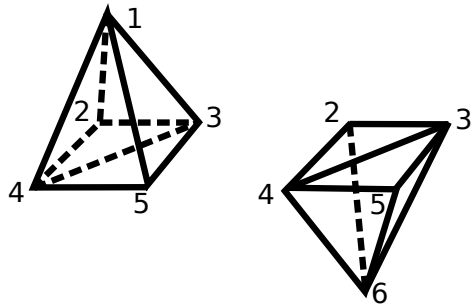}
 \caption{Two elementary strong boundaries of the triangular configuration in Figure~\ref{fig:elementaryboundary}.}
 \label{fig:elementaryboundary2}
\end{figure}

\begin{figure}
 \centering
 \includegraphics{./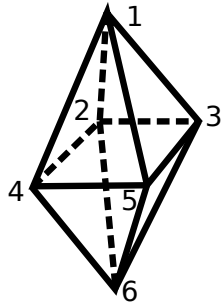}
 \caption{This strong boundary of the triangular configuration in Figure~\ref{fig:elementaryboundary} is not elementary.}
 \label{fig:elementaryboundary3}
\end{figure}

\begin{lem}
\label{lem:cellelembound}
Let $\Delta$ be a connected triangular configuration embedded into $\mathbb{R}^3$. Let $X$ be a bounded cell of $\Delta$. Let $E$ be the set of the triangles of $\Delta$ incident with $X$.
Then $\mathcal{K}(E)$ (triangular configuration defined by the set of triangles $E$) is an elementary strong boundary of $\Delta$.
\end{lem}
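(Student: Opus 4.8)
The plan is to prove two things: that $\mathcal{K}(E)$ is a strong boundary, and that it is elementary, with essentially all the work lying in the first. First I record the basic reduction. By the definition of incidence, $E$ is exactly the set of triangles $t$ with $t\subseteq cl(X)$, so $cl(X)=X\cup\mathcal{K}(E)$; in particular the topological boundary of $X$ is contained in $\mathcal{K}(E)$. Hence $X$ is open, and also relatively closed, in $\mathbb{R}^3\setminus\mathcal{K}(E)$, so, being connected, it is precisely one cell of $\mathcal{K}(E)$; it is bounded, and since $\Delta$ is finite there is in addition an unbounded cell, so $\mathcal{K}(E)$ has at least two cells. To conclude that $\mathcal{K}(E)$ is a strong boundary it suffices to show that it has exactly two cells and that every triangle of $E$ is incident with both of them: then by Lemma~\ref{lem:cellsjoin} deleting any single triangle joins the two cells into one, so every proper subconfiguration has a single cell and the minimality required by the definition holds.

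It is convenient to reformulate these two conditions through the cell-adjacency graph $\mathcal{G}$ of $\Delta$, whose vertices are the cells of $\Delta$ and where each triangle $t$ contributes an edge joining the (by Proposition~\ref{prop:twocells}, at most two) cells incident with it, a loop if $t$ is incident with a single cell. Since $E$ is exactly the set of triangles incident with $X$, the edges of $\mathcal{G}$ at the vertex $X$ are exactly the triangles of $E$, and a short check shows that the non-$X$ cells of $\mathcal{K}(E)$ are in bijection with the connected components of $\mathcal{G}-X$. Thus ``$\mathcal{K}(E)$ has exactly two cells'' becomes ``$\mathcal{G}-X$ is connected'', and ``no triangle of $E$ is incident with $X$ on both sides'' becomes ``there is no loop at $X$''.

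The crux, and the only place where the hypothesis that $\Delta$ is connected enters, is to establish these two graph statements, and here I would exploit a correspondence between strong boundaries and bonds (minimal edge cuts) of $\mathcal{G}$. If $B$ is a strong boundary of $\Delta$, then, since the cells of $\Delta$ refine the two cells $\inter{B},\ext{B}$ of $B$, the partition of the cells into those lying in $\inter{B}$ and those lying in $\ext{B}$ is a cut whose two sides are connected in $\mathcal{G}$ (because $\inter{B}$ and $\ext{B}$ are connected), i.e.\ a bond, and every triangle of $B$ is an edge of this bond. Connectedness of $\Delta$ says every two triangles lie on a common strong boundary, hence every two edges of $\mathcal{G}$ lie on a common bond. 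Now if $\mathcal{G}-X$ were disconnected, then, choosing an edge $e_A$ from $X$ into one component and $e_B$ from $X$ into another, any bond $\delta(S)$ containing both would have to place the far endpoints of $e_A,e_B$ on the same side $S$ with $X\notin S$; but those endpoints lie in different components of $\mathcal{G}-X$, so $\mathcal{G}[S]$ is disconnected, contradicting that $\delta(S)$ is a bond. The same correspondence shows a loop lies in no bond, so connectedness forbids a loop at $X$. I expect the careful verification of this bond correspondence (that both sides of the induced cut are genuinely connected, and that $\mathcal{G}$ itself is connected so that every component of $\mathcal{G}-X$ can be reached from $X$) to be the main obstacle.

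Finally, that $\mathcal{K}(E)$ is elementary is easy. Once the above is established, its inner cell is $\inter{\mathcal{K}(E)}=X$, which is connected and disjoint from $\Delta$, hence from every strong boundary $C'$ of $\Delta$. Since $C'$ separates $\mathbb{R}^3\setminus C'$ into the two disjoint open cells $\inter{C'}$ and $\ext{C'}$, the connected set $X$ lies entirely in one of them; therefore $\inter{\mathcal{K}(E)}\cap\inter{C'}$ and $\inter{\mathcal{K}(E)}\cap\ext{C'}$ cannot both be nonempty, which is exactly the definition of $\mathcal{K}(E)$ being elementary.
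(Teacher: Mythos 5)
Your proposal is correct, but it takes a genuinely different route from the paper's proof. The paper argues directly in the geometry: using connectivity it shows $\mathcal{K}(E)$ has at least two cells (a single cell would leave some triangle of $E$ on no strong boundary of $\Delta$), at most two cells (three cells $X_1,X_2,X_3$ would yield triangles $t_1,t_2$ of $E$ whose common strong boundary $D$ forces $X_1,X_3\subseteq\ext{D}$, so they could not be cells of $\mathcal{K}(E)$), and that each triangle of $E$ is incident with both cells; minimality then comes from Lemma~\ref{lem:cellsjoin}, and elementarity from locating a triangle of the offending boundary inside $\inter{\mathcal{K}(E)}$ and showing it would be a triangle of $E$ incident with only one cell of $\mathcal{K}(E)$. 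You instead translate everything into the cell-adjacency graph $\mathcal{G}$: strong boundaries of $\Delta$ induce bonds of $\mathcal{G}$ containing all their triangles as cut edges, connectivity of $\Delta$ becomes ``any two edges of $\mathcal{G}$ lie on a common bond,'' and the two required statements ($\mathcal{G}-X$ connected, no loop at $X$) follow by a short graph argument. Your route buys three things: the combinatorial content is isolated in a single reusable correspondence; the identification $\inter{\mathcal{K}(E)}=X$ is established at the outset, a fact the paper uses implicitly but never proves (it is needed, for instance, for the paper's claim that a triangle of $E$ incident with one cell of $\mathcal{K}(E)$ is incident with only one cell of $\Delta$, and again in its elementarity step); and your elementarity argument --- $X$ is connected and disjoint from every strong boundary $C'$ of $\Delta$, hence lies wholly in $\inter{C'}$ or wholly in $\ext{C'}$ --- is shorter and cleaner than the paper's. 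What the paper's route buys is brevity: it skips the machinery whose verification you rightly flag as the main labor (the bijection between the non-$X$ cells of $\mathcal{K}(E)$ and the components of $\mathcal{G}-X$, and the bond correspondence); both verifications do go through by the standard perturbation argument, namely that a polygonal path avoiding $\mathcal{K}(E)$ (respectively staying in $\inter{B}$) can be perturbed to miss all edges and vertices of $\Delta$ and to cross triangles transversally, yielding a walk in the appropriate graph. One caveat, applying equally to you and to the paper: identities such as $cl(X)=X\cup\mathcal{K}(E)$ hold only up to simplices of dimension at most one (a maximal segment of $\Delta$ may meet $\partial X$ without lying in $\mathcal{K}(E)$); since such simplices never obstruct polygonal paths, this is harmless at the level of rigor the paper itself adopts, for example in Lemma~\ref{lem:cellsjoin}.
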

\begin{proof}
Since cell $X$ is bounded, set $E$ is nonempty.
Triangular configuration $\mathcal{K}(E)$ has at least two cells: If $\mathcal{K}(E)$ has only one cell, there is a triangle of $E$ that belongs to no strong boundary of $\Delta$. This contradict the connectivity of $\Delta$.

Now we show that triangular configuration $\mathcal{K}(E)$ has at most two cells: For a contradiction suppose that $\mathcal{K}(E)$ has at least three cells $X_1,X_2,X_3$.
Let $t_1$ be a triangle of $E$ incident with $X_1$ and $X_2$ and let $t_2$ be a triangle of $E$ incident with $X_2,X_3$.
Since $\Delta$ is connected, there is a strong boundary $D$ that contains $t_1$ and $t_2$.
The cell $X$ is a subset of $X_2$ and the cells $X,X_2$ are subsets of $\inter{D}$. Then $E\subseteq\overline{\inter{D}}$ and $X_1,X_3\subseteq\ext{D}$.
Hence, the triangular configuration $\mathcal{K}(E)$ does not have cells $X_1,X_3$, the contradiction.

Let $t$ be a triangle of $E$. We show that triangle $t$ is incident with two cells $X_1,X_2$ of $\mathcal{K}(E)$.
For a contradiction suppose that $t$ is incident only with cell $X_1$. It holds $X\subseteq X_1$. Then $t$ is incident only with cell $X$ of $\Delta$.
By connectivity, triangle $t$ belongs to a strong boundary $D$ of $\Delta$. By Proposition~\ref{prop:div2cells}, triangle $t$ is incident with two cells of $D$.
Since $D$ is a subconfiguration of $\Delta$, triangle $t$ is incident with two cells $X^1_\Delta,X^2_\Delta$ of $\Delta$. This is the contradiction.

By Lemma~\ref{lem:cellsjoin}, $\mathcal{K}(E)\setminus\{t\}$ has only one cell.
Hence $\mathcal{K}(E)$ is a strong boundary.

For a contradiction suppose that $\mathcal{K}(E)$ is not an elementary strong boundary of $\Delta$. Then there is a strong boundary $C$ of $\Delta$ such that $\inter{\mathcal{K}(E)}\cap\inter{C}\neq\emptyset$ and $\inter{\mathcal{K}(E)}\cap\ext{C}\neq\emptyset$.
Then there is a triangle $t'$ of $C$ that belongs to $\inter{\mathcal{K}(E)}$. If there is not such a triangle $t'$, then $\inter{\mathcal{K}(E)}\subseteq \inter{C}$ and $\inter{\mathcal{K}(E)}\subseteq \ext{C}$. This contradict that $\ext{C}$ and $\inter{C}$ are disjoint.
Since the cell $X$ is a subset of $\inter{\mathcal{K}(E)}$, triangle $t'$ also belongs to $E$.
Since $t'\in\inter{\mathcal{K}(E)}$, triangle $t'$ is incident only with cell $\inter{\mathcal{K}(E)}$ of $\mathcal{K}(E)$. This is the contradiction. Thus, $\mathcal{K}(E)$ is an elementary strong boundary of $\Delta$.
\end{proof}

\begin{lem}
\label{lem:elembound}
Let $\Delta$ be a connected triangular configuration embedded into $\mathbb{R}^3$. Let $t$ be a triangle of $\Delta$ that belongs to a strong boundary $C$ of $\Delta$. Then $t$ belongs to exactly one elementary strong boundary $C'$ of $\Delta$ such that $C'\subseteq\overline{\inter{C}}$.
\end{lem}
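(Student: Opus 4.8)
The plan is to deduce both existence and uniqueness from the correspondence between elementary strong boundaries and bounded cells of $\Delta$ given by Lemma~\ref{lem:cellelembound}. The basic principle I will use repeatedly is that, since $C$ and every elementary strong boundary are subconfigurations of $\Delta$, each cell of $\Delta$ is contained in a single cell of $C$ and in a single cell of any such boundary; that is, the cells of $\Delta$ refine the cells of any subconfiguration. I will also use a preliminary observation, easily checked: for a bounded cell $X$ of $\Delta$ the inner cell of $\mathcal{K}(E)$, where $E$ is the set of triangles incident with $X$, is exactly $X$, because the topological boundary of the region $X$ consists precisely of the triangles incident with $X$, so $X$ is a full connected component of the complement of $\mathcal{K}(E)$, hence its (bounded) inner cell.

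For existence I would argue as follows. Since $t$ lies on the strong boundary $C$, the argument already used inside the proof of Lemma~\ref{lem:cellelembound} (pass to a strong boundary through $t$ and invoke Proposition~\ref{prop:div2cells}) shows that $t$ is incident with exactly two cells of $\Delta$. As $t\in C$, it is incident with both cells of $C$ by Corollary~\ref{cor:boundtricell}, so one local side of $t$ points into $\inter{C}$ and the other into $\ext{C}$. Let $X$ be the cell of $\Delta$ on the inner side and $Y$ the cell on the outer side; by refinement $X\subseteq\inter{C}$ (so $X$ is bounded and incident with $t$) and $Y\subseteq\ext{C}$. Applying Lemma~\ref{lem:cellelembound} to $X$, the configuration $C':=\mathcal{K}(E)$ of triangles incident with $X$ is an elementary strong boundary containing $t$, and since $E\subseteq cl(X)\subseteq cl(\inter{C})=\overline{\inter{C}}$ by Proposition~\ref{prop:closureboundary}, we obtain $C'\subseteq\overline{\inter{C}}$.

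For uniqueness, let $C''$ be any elementary strong boundary with $t\in C''$ and $C''\subseteq\overline{\inter{C}}$; I want to force $C''=\mathcal{K}(E)$. The key reduction is to show $\inter{C''}=X$ for the same cell $X$ as above: once this holds, $C''$ must equal $\mathcal{K}(E)$, since the triangles of a strong boundary are exactly those incident with both of its cells (Corollary~\ref{cor:boundtricell}), which here are precisely the triangles incident with $X$. To see which side of $t$ is the inner one for $C''$, note that $C''\subseteq\overline{\inter{C}}=C\cup\inter{C}$ is disjoint from the cell $\ext{C}$; hence the connected unbounded set $\ext{C}$ lies in a single cell of $C''$, necessarily $\ext{C''}$. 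Since $Y\subseteq\ext{C}$ this forces $Y\subseteq\ext{C''}$ and therefore $X\subseteq\inter{C''}$.

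The hard part will be excluding that $\inter{C''}$ is strictly larger than $X$, i.e.\ that triangles of $\Delta$ subdivide it; I plan to rule this out using elementarity. If $\inter{C''}\neq X$, then by refinement $\inter{C''}$ contains a further cell $X_1\neq X$ of $\Delta$, which is bounded since $\inter{C''}$ is bounded. Lemma~\ref{lem:cellelembound} then makes $D:=\mathcal{K}(E_{X_1})$ an elementary strong boundary with $\inter{D}=X_1$. Now $\inter{D}=X_1\subseteq\inter{C''}$ gives $\inter{C''}\cap\inter{D}\neq\emptyset$, while $X$ is connected, disjoint from $D$, and different from $X_1$, so $X\subseteq\ext{D}$ and hence $X\subseteq\inter{C''}\cap\ext{D}\neq\emptyset$. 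Thus $D$ witnesses that $C''$ is not elementary, a contradiction. Therefore $\inter{C''}=X$ and $C''=\mathcal{K}(E)=C'$, which establishes that $t$ lies in such an elementary strong boundary and that it is unique.
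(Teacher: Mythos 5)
Your proof is correct, and its existence half is exactly the paper's argument: take the cell $X$ of $\Delta$ on the inner side of $t$, apply Lemma~\ref{lem:cellelembound} to get the elementary strong boundary $C'=\mathcal{K}(E)$, and use Proposition~\ref{prop:closureboundary} to place it in $\overline{\inter{C}}$. Where you genuinely diverge is uniqueness. The paper disposes of a second boundary $C''$ in two bare assertions: first that $\inter{C'}\cap\inter{C''}\neq\emptyset$, and second that $C'\neq C''$ forces $\ext{C'}\cap\inter{C''}\neq\emptyset$ or $\ext{C''}\cap\inter{C'}\neq\emptyset$, contradicting elementarity; neither assertion is actually proved there. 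You instead prove the stronger statement $\inter{C''}=X$, pitting $C''$ not against $C'$ but against an auxiliary elementary boundary $D$ (built, again via Lemma~\ref{lem:cellelembound}, around a hypothetical extra cell $X_1\subseteq\inter{C''}$), and then identify $C''=\mathcal{K}(E)$ outright. Your ``which side'' argument ($\ext{C}$ is unbounded and disjoint from $C''$, hence lies in $\ext{C''}$, forcing $X\subseteq\inter{C''}$) is in effect a rigorous proof of the paper's first assertion, and the auxiliary-$D$ argument replaces the unproved second one. The paper's route buys brevity; yours buys actual justification of both claims and pins down the boundary as equal to $\mathcal{K}(E)$ rather than merely refuting coexistence. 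Two caveats, at the same informal level of rigor the paper itself operates at: your preliminary observation $\inter{\mathcal{K}(E)}=X$ is true but not quite ``easily checked'' (one must rule out polygonal paths escaping $X$ through edges and vertices of $\Delta$ not covered by triangles of $E$), and the step ``$\inter{C''}\neq X$ implies $\inter{C''}$ contains a further cell of $\Delta$'' silently needs Lemma~\ref{lem:trianinc} (hence connectivity of $\Delta$) to exclude the possibility that $\inter{C''}\setminus X$ consists only of points of $\Delta$ itself.
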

\begin{proof}
Let $X$ be a cell of $\Delta$ such that $X$ is incident with $t$ and $X\subseteq\inter{C}$. Cell $X$ is bounded.
Let $C'$ be the set of triangles of $\Delta$ incident with $X$.
By lemma~\ref{lem:cellelembound}, the triangular configuration $\mathcal{K}(C')$ is an elementary strong boundary of $\Delta$.
By Proposition~\ref{prop:closureboundary} and from $X\subseteq\inter{C}$, we have $C'\subseteq\overline{\inter{C}}$.

If there is an elementary strong boundary $C''$ of $\Delta$ different from $C'$ that contains $t$ such that $C''\subseteq\overline{\inter{C}}$.
We have $\inter{C'}\cap\inter{C''}\neq\emptyset$.
Since $C'\neq C''$, we have $\ext{C'}\cap\inter{C''}\neq\emptyset$ or $\ext{C''}\cap\inter{C'}\neq\emptyset$. This a contradiction with the definition of elementary strong boundary. Thus, $t$ is contained only in one elementary strong boundary that is contained in $\overline{\inter{C}}$.
\end{proof}

\begin{lem}
\label{lem:trianinc}
Let $\Delta$ be a connected triangular configuration embedded into $\mathbb{R}^3$. Let $t$ be a triangle of $\Delta$. Then $t$ is incident with two cells of $\Delta$.
\end{lem}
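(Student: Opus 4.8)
The plan is to combine Proposition~\ref{prop:twocells}, which already guarantees that $t$ is incident with at most two cells of $\Delta$, with the connectivity hypothesis to rule out the degenerate possibility of a single incident cell. By that proposition it suffices to exhibit two \emph{distinct} cells of $\Delta$ whose closures both contain $t$; the upper bound then forces "exactly two".

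First I would use connectivity exactly as in the proof of Lemma~\ref{lem:cellelembound}: since $\Delta$ is connected, the triangle $t$ belongs to some strong boundary $D$ of $\Delta$. By Proposition~\ref{prop:div2cells} the strong boundary $D$ has exactly two cells, and by Corollary~\ref{cor:boundtricell} the triangle $t$ is incident with both of them; call these distinct cells $Y_1,Y_2$ of $D$. So two-sidedness of $t$ holds already inside the subconfiguration $D$.

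Next I would transfer this two-sidedness from $D$ up to $\Delta$ by reusing the local construction from the proof of Proposition~\ref{prop:twocells}. Pick an interior point $p$ of $t$ lying on no edge, a normal vector $v$, and small tetrahedra $P^+_\epsilon,P^-_\epsilon$ on the two sides of $t$ with $\Delta\cap P^\pm_\epsilon=t$. Because $D\subseteq\Delta$, the same $\epsilon$ yields $D\cap P^\pm_\epsilon=t$, so each punctured tetrahedron $P^\pm_\epsilon\setminus t$ lies in a single cell of $\Delta$ and in a single cell of $D$. Writing $X^+,X^-$ for the cells of $\Delta$ and $Y^+,Y^-$ for the cells of $D$ containing these punctured tetrahedra, the inclusion $\mathbb{R}^3\setminus\Delta\subseteq\mathbb{R}^3\setminus D$ shows that every connected component of $\mathbb{R}^3\setminus\Delta$ lies inside a connected component of $\mathbb{R}^3\setminus D$; hence $X^+\subseteq Y^+$ and $X^-\subseteq Y^-$. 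The two cells of $D$ incident with $t$ are precisely $Y^+$ and $Y^-$, which are distinct by Corollary~\ref{cor:boundtricell}, and therefore $X^+\neq X^-$. Both closures contain $t$, so $t$ is incident with two distinct cells of $\Delta$, and Proposition~\ref{prop:twocells} closes the argument.

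The main obstacle, and the only delicate point, is the containment step relating cells of $\Delta$ to cells of $D$: one must argue that enlarging the complex can only refine (subdivide) cells and never merge them, so that the distinct sides $Y^+\neq Y^-$ forced by the strong boundary $D$ survive in $\Delta$. Once this monotonicity of cells under passing to a superconfiguration is pinned down, the distinctness of $X^+$ and $X^-$ is immediate and the lemma follows.
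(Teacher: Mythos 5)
Your proposal is correct and follows essentially the same route as the paper's proof: connectivity gives a strong boundary $D$ containing $t$, Corollary~\ref{cor:boundtricell} makes $t$ incident with the two cells of $D$, and this incidence is then transferred to $\Delta$. The only difference is that you explicitly justify the transfer step (via the small tetrahedra from the proof of Proposition~\ref{prop:twocells} together with the observation that each cell of $\Delta$ lies inside a cell of $D$, so distinct cells of $D$ force distinct cells of $\Delta$), whereas the paper dispatches this step in a single sentence; your elaboration is sound and, if anything, makes explicit a point the paper leaves implicit.
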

\begin{proof}
Since $\Delta$ is connected, there is a strong boundary $C$ of $\Delta$ that contains $t$.
By Corollary~\ref{cor:boundtricell}, $t$ is incident with two cells of $C$. Since $C$ is a subconfiguration of $\Delta$, $t$ is also incident with two cells of $\Delta$.
\end{proof}

\begin{lem}
\label{lem:elembound2}
Let $\Delta$ be a connected triangular configuration embedded into $\mathbb{R}^3$. Let $t$ be a triangle of $\Delta$ such that $t$ is contained in $\inter{C}$ where $C$ is a strong boundary of $\Delta$. Then $t$ belongs to exactly two elementary strong boundaries $C_1,C_2$ of $\Delta$ and $C_1\subseteq\overline{\inter{C}}$ and $C_2\subseteq\overline{\inter{C}}$.
\end{lem}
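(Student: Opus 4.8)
The plan is to exhibit the two elementary strong boundaries explicitly as the boundaries of the two cells of $\Delta$ incident with $t$, and then to use the uniqueness clause of Lemma~\ref{lem:elembound} to rule out any further elementary strong boundary through $t$.

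First I would identify the candidates. Since $\Delta$ is connected, Lemma~\ref{lem:trianinc} gives that $t$ is incident with exactly two cells $X_1,X_2$ of $\Delta$. Because $t\subseteq\inter{C}$, a point in the relative interior of $t$ lies in the open region $\inter{C}$, so both local sides of $t$ meet $\inter{C}$; as each $X_i$ is a connected subset of $\mathbb{R}^3\setminus\Delta$ meeting the cell $\inter{C}$ of $C$, we get $X_1,X_2\subseteq\inter{C}$, and in particular both are bounded. Applying Lemma~\ref{lem:cellelembound} to the bounded cells $X_1$ and $X_2$ produces elementary strong boundaries $C_1:=\mathcal{K}(E_1)$ and $C_2:=\mathcal{K}(E_2)$, where $E_i$ is the set of triangles incident with $X_i$; since $t$ is incident with both $X_1$ and $X_2$, we have $t\in C_1$ and $t\in C_2$. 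Every triangle of $C_i$ lies in $cl(X_i)\subseteq cl(\inter{C})=\overline{\inter{C}}$ by Proposition~\ref{prop:closureboundary}, so $C_1,C_2\subseteq\overline{\inter{C}}$, as required.

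Next I would show $C_1\neq C_2$. Here I would first check the auxiliary fact that the bounded cell $X_i$ used to build $C_i$ satisfies $X_i\subseteq\inter{C_i}$: the set $X_i$ is bounded and disjoint from $C_i$, hence contained in a single cell of $C_i$, and it cannot lie in the unbounded cell $\ext{C_i}$ because every polygonal path from $X_i$ to infinity must cross one of the triangles bounding $X_i$, all of which belong to $C_i$. Now $X_1$ and $X_2$ are the two $\Delta$-cells incident with $t$ and lie on opposite sides of $t$; since $\inter{C_1}$ is on the $X_1$-side near $t$, the opposite cell $X_2$ lies in $\ext{C_1}$. If $C_1=C_2$ we would obtain $X_2\subseteq\inter{C_2}=\inter{C_1}$, contradicting $X_2\subseteq\ext{C_1}$. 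Thus $C_1$ and $C_2$ are two distinct elementary strong boundaries through $t$ contained in $\overline{\inter{C}}$.

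The main obstacle, and the final step, is to prove there is no third one, i.e.\ that any elementary strong boundary $C'$ with $t\in C'$ equals $C_1$ or $C_2$. The key observation is that $t$ is incident with exactly the two cells $\inter{C'}$ and $\ext{C'}$ of $C'$ (Corollary~\ref{cor:boundtricell}), and each $X_i$, being connected and disjoint from $C'$, lies entirely in one of them; since $X_1,X_2$ sit on opposite sides of $t$, exactly one of them, say $X_1$, satisfies $X_1\subseteq\inter{C'}$. I would then apply Lemma~\ref{lem:elembound} to the strong boundary $C'$ itself: it guarantees that $t$ lies in exactly one elementary strong boundary contained in $\overline{\inter{C'}}$, and its proof constructs that boundary as $\mathcal{K}(E_1)=C_1$, starting from the $\Delta$-cell $X_1$ incident with $t$ inside $\inter{C'}$. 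Since $C'$ is itself elementary, contains $t$, and satisfies $C'\subseteq\overline{\inter{C'}}$, this uniqueness forces $C'=C_1$; symmetrically, $X_2\subseteq\inter{C'}$ forces $C'=C_2$. Hence $t$ belongs to precisely the two elementary strong boundaries $C_1$ and $C_2$, both contained in $\overline{\inter{C}}$, completing the argument.
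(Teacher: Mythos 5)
Your proof is correct and follows essentially the same route as the paper: both arguments identify the two candidate boundaries as $\mathcal{K}(E_1)$ and $\mathcal{K}(E_2)$, where $E_1,E_2$ are the triangle sets of the two cells of $\Delta$ incident with $t$ (via Lemma~\ref{lem:trianinc} and Lemma~\ref{lem:cellelembound}), and both deduce $C_1,C_2\subseteq\overline{\inter{C}}$ from $X_1,X_2\subseteq\inter{C}$. The one genuine divergence is the exclusion of a third elementary strong boundary $C'$: the paper argues directly from the definition of elementarity (since $t\in C'$, the cell $\inter{C'}$ meets $\inter{C_1}$ or $\inter{C_2}$, and $C'\neq C_i$ then forces $\ext{C_i}\cap\inter{C'}\neq\emptyset$ or $\ext{C'}\cap\inter{C_i}\neq\emptyset$, contradicting elementarity), whereas you apply the uniqueness clause of Lemma~\ref{lem:elembound} to the strong boundary $C'$ itself, using that $C'\subseteq\overline{\inter{C'}}$ holds trivially and that one of $C_1,C_2$ also lies in $\overline{\inter{C'}}$; both closings are sound and of comparable weight, yours having the advantage of reusing an already proved uniqueness statement rather than re-deriving it. A further merit of your write-up is that it proves two facts the paper uses silently: that $C_1\neq C_2$, and that the cell $X_i$ from which $C_i$ is built satisfies $X_i\subseteq\inter{C_i}$ (the paper's contradiction step needs precisely this to get $\inter{C_3}\cap\inter{C_1}\neq\emptyset$). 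Note only that your justification of $X_i\subseteq\inter{C_i}$ (``every polygonal path from $X_i$ to infinity crosses a triangle of $C_i$'') would, in a fully rigorous treatment, have to rule out paths escaping through edges or vertices of $\Delta$ not lying on triangles of $C_i$; this is, however, at the same level of informality as the paper's own topological arguments, so it does not constitute a gap relative to the paper.
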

\begin{proof}
By lemma~\ref{lem:trianinc}, triangle $t$ is incident with two cells $X_1$ and $X_2$ of $\Delta$.
Let $C_1$ and $C_2$ be the sets of triangles incident with $X_1$ and $X_2$, respectively. By lemma~\ref{lem:cellelembound}, the sets $C_1$ and $C_2$ are elementary strong boundaries of $\Delta$.

Since $t\in\inter{C}$, cells $X_1$ and $X_2$ are subsets of $\inter{C}$. Thus, $C_1\subseteq\overline{\inter{C}}$ and $C_2\subseteq\overline{\inter{C}}$.

For a contradiction suppose that there is a third elementary strong boundary $C_3$ of $\Delta$ that contains $t$. Since $t$ is incident with two cells, we have $\inter{C_3}\cap\inter{C_1}\neq\emptyset$ or $\inter{C_3}\cap\inter{C_2}\neq\emptyset$. Without loose of generality we can suppose $\inter{C_3}\cap\inter{C_1}\neq\emptyset$. Since $C_3\neq C_1$, we have $\ext{C_1}\cap\inter{C_3}\neq\emptyset$ or $\ext{C_3}\cap\inter{C_1}\neq\emptyset$. This a contradiction with the definition of elementary strong boundary.
Thus, $t$ is contained in exactly two elementary strong boundaries of $\Delta$.
\end{proof}

\begin{prop}
\label{prop:boundcomb}
Let $\Delta$ be a connected triangular configuration embedded into $\mathbb{R}^3$ and let $C$ be a strong boundary of $\Delta$ and let $ESB(C)$ be the set of elementary strong boundaries of $\Delta$ contained in $\overline{\inter{C}}$. Then $\chi(C)$ equals the sum of the characteristics vectors of the elements of $ESB(C)$ over $GF(2)$. Thus, $\chi(C)=\sum_{S\in ESB(C)}\chi(S)$.
\end{prop}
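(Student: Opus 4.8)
The plan is to verify the claimed identity coordinate by coordinate over $GF(2)$. For a fixed triangle $t$ of $\Delta$, the $t$-th entry of $\sum_{S\in ESB(C)}\chi(S)$ is the parity of the number of elementary strong boundaries in $ESB(C)$ that contain $t$, so it suffices to show that this parity equals $\chi(C)_t$ for every $t$; that is, that it is $1$ exactly when $t$ is a triangle of $C$ and $0$ otherwise.

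First I would observe that every triangle $t$ of $\Delta$ falls into exactly one of three classes according to its position relative to $C$: either (i) $t$ is a triangle of $C$, or (ii) $t\notin C$ and $t\subseteq\overline{\inter{C}}$ (so its relative interior lies in $\inter{C}$), or (iii) $t\not\subseteq\overline{\inter{C}}$. This trichotomy is justified geometrically: since $C$ separates $\mathbb{R}^3$ into the two cells $\inter{C}$ and $\ext{C}$, the relative interior of a triangle $t\notin C$ is connected and disjoint from $C$, hence lies entirely in one of these two cells, which places $t$ in class (ii) or (iii).

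Then I would treat the three classes using the lemmas already established. In class (i), Lemma~\ref{lem:elembound} gives that $t$ belongs to exactly one elementary strong boundary contained in $\overline{\inter{C}}$, i.e.\ to exactly one element of $ESB(C)$; the parity is $1=\chi(C)_t$. In class (ii), $t$ is contained in $\inter{C}$, so Lemma~\ref{lem:elembound2} gives that $t$ belongs to exactly two elementary strong boundaries, both contained in $\overline{\inter{C}}$, hence to exactly two elements of $ESB(C)$; the parity is $0=\chi(C)_t$. In class (iii), every $S\in ESB(C)$ satisfies $S\subseteq\overline{\inter{C}}$ while $t\not\subseteq\overline{\inter{C}}$, so no $S$ contains $t$ and the parity is again $0=\chi(C)_t$. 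Since the parity matches $\chi(C)_t$ in all three cases, the coordinate-wise identity holds and $\chi(C)=\sum_{S\in ESB(C)}\chi(S)$ over $GF(2)$.

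I expect the only real subtlety to be the bookkeeping in the trichotomy — in particular confirming that a triangle sharing edges with $C$ but not belonging to $C$ still lands cleanly in class (ii), so that Lemma~\ref{lem:elembound2} applies — since the quantitative heart of the argument (the counts $1$, $2$, and $0$) is supplied directly by Lemmas~\ref{lem:elembound} and~\ref{lem:elembound2} together with the definition of $ESB(C)$.
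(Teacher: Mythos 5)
Your proof is correct and is essentially the paper's own argument: the paper also splits triangles into those of $C$ (exactly one containing element of $ESB(C)$, by Lemma~\ref{lem:elembound}), those inside $\inter{C}$ (exactly two, by Lemma~\ref{lem:elembound2}), and those outside $\overline{\inter{C}}$ (none), merely phrasing the parity count as a symmetric difference $\bigtriangleup_{S\in ESB(C)}T(S)=T(C)$ rather than coordinate-wise. The subtlety you flag about triangles touching $C$ but not in $C$ is likewise glossed over in the paper, which applies Lemma~\ref{lem:elembound2} under the same reading.
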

\begin{proof}
Each element of $ESB(C)$ is contained in $\overline{\inter{C}}$.
Therefore,
$$\bigtriangleup_{S\in ESB(C)}T(S)\subseteq\overline{\inter{C}},$$
where $T(S)$ denotes the set of triangles of $S$.
Let $t$ be a triangle of $\Delta$ such that $t\subseteq\inter{C}$.
By Lemma~\ref{lem:elembound2}, $t$ is incident with two elementary boundaries $C_1$ and $C_2$ such that $C_1,C_2\in ESB(C)$.
Therefore,
$$\bigtriangleup_{S\in ESB(C)}T(S)\subseteq T(C).$$

Let $t$ be a triangle of $C$. By Lemma~\ref{lem:elembound}, $t$ belongs to exactly one elementary strong boundary from $ESB(C)$.
Therefore,
$$\bigtriangleup_{S\in ESB(C)}T(S)\supseteq T(C).$$
Hence,
$$\bigtriangleup_{S\in ESB(C)}T(S)= T(C)$$
and
$$\sum_{S\in ESB(C)}\chi(S)=\chi(C)$$
over $GF(2)$.
\end{proof}

\subsubsection{Non-empty even subsets divide $\mathbb{R}^3$}
\begin{prop}
\label{prop:divides}
Let $\Delta$ be a non-empty triangular configuration embedded into $\mathbb{R}^3$ with all edges of an even degree. Then $\Delta$ has at least two cells.
\end{prop}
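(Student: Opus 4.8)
The plan is to produce two points of $\mathbb{R}^3\setminus\Delta$ lying in distinct cells by a parity (ray-casting) argument, reusing the construction from the proof of Proposition~\ref{prop:twocells}. Fix a triangle $t$ of $\Delta$, a point $p$ in the relative interior of $t$ lying on no edge of $\Delta$, a vector $v$ orthogonal to $t$, and $\epsilon>0$ so small that the segment $[p-\epsilon v,\,p+\epsilon v]$ meets $\Delta$ only in the single point $p$; put $x^+=p+\epsilon v$ and $x^-=p-\epsilon v$, and let $X^+,X^-$ be the cells containing them. It suffices to prove $X^+\neq X^-$, for then $\Delta$ has at least two cells.

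To each $q\in\mathbb{R}^3\setminus\Delta$ I would assign a parity $\phi(q)\in\{0,1\}$ as follows. Choose a ray $R$ from $q$ whose direction is \emph{generic}, meaning $R$ avoids every vertex of $\Delta$, lies in the plane of no triangle, and meets no edge of $\Delta$; then $R$ crosses the relative interiors of finitely many triangles transversally, and I set $\phi(q)$ to be that number of crossings modulo $2$. The heart of the argument is to show $\phi(q)$ is independent of the generic direction. Projecting the triangles radially onto the sphere of directions centered at $q$, the crossed triangles are exactly those whose projection covers the direction of $R$, so $\phi(q)$ counts these covers mod $2$. This count is locally constant off the projected edges, and when the direction sweeps transversally across the projected image of an edge $e$, the count changes by the number of triangles incident with $e$, that is, by the degree of $e$. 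Since every edge of $\Delta$ has even degree, every such wall-crossing preserves parity; the projected vertices form a set of codimension two and so do not obstruct joining any two generic directions through these walls. Hence $\phi(q)$ is well defined.

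Next I would verify that $\phi$ is constant on each cell. If $q$ moves along a polygonal path inside a cell it never crosses $\Delta$, so no crossing with a triangle interior is created or destroyed; keeping the ray direction fixed, the parity can change only when the ray sweeps past an edge, which by the even-degree hypothesis again preserves it. Thus $\phi$ is locally constant on $\mathbb{R}^3\setminus\Delta$ and hence constant on every cell. Finally I would compare the two chosen points: taking one generic ray $R$ from $x^+$ to infinity, the ray $R'$ that first runs from $x^-$ through $p$ to $x^+$ and then follows $R$ meets $\Delta$ in exactly the crossings of $R$ together with the single extra crossing at $p\in t$, because $[p-\epsilon v,\,p+\epsilon v]$ meets $\Delta$ only in $p$. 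Therefore $\phi(x^-)=\phi(x^+)+1$, so $\phi(x^+)\neq\phi(x^-)$, forcing $X^+\neq X^-$.

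The main obstacle is the well-definedness of $\phi$: making the sweep-across-an-edge analysis rigorous requires treating the degenerate directions (rays through a vertex, rays coplanar with a triangle, rays grazing an edge) carefully and checking that crossing the projected image of an edge of even degree changes the cover count by an even number. This is precisely the only place where the even-degree hypothesis enters, and it is what makes $\phi$ a genuine $\{0,1\}$-valued separating invariant; note that no connectivity of $\Delta$ is needed. Everything else is the bookkeeping already developed for Proposition~\ref{prop:twocells}.
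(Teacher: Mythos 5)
Your proposal is correct in substance and is the same kind of argument the paper gives: a Jordan-curve-style ray-casting parity argument in which the even degree of every edge is exactly what makes the parity well defined, finished by exhibiting two points straddling the interior of a single triangle. The difference is in how well-definedness is organized, and the two choices are dual. The paper fixes \emph{one global generic direction} and moves the basepoint: to make the parity defined at every point it assigns a corrected count to rays that pass through edges (the term $\min\{n_e,m_e\}$, where $n_e,m_e$ count the incident triangles on the two sides of the plane spanned by the edge and the ray) and treats rays through vertices by small neighborhoods; its key lemma is that the parity survives as the basepoint slides along a segment whose endpoint ray hits an edge. You instead fix the basepoint and vary the direction: degenerate rays are excluded outright, all crossings are transversal, no correction term is needed, and the key lemma becomes a wall-crossing statement on the sphere of directions. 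Your version buys a cleaner invariant at the price of the general-position bookkeeping on the sphere, which you correctly identify as the crux. One imprecision in your sweep step: crossing the projected image of an edge $e$ changes the covering count by (number of incident triangles that appear) minus (number that disappear), not by the degree of $e$ itself; the two agree modulo $2$, which is all you need, but the statement should be phrased that way.

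One step does need repair. Your invariant $\phi$ is defined only through \emph{straight} generic rays, but the path $R'$ in your final comparison (the segment from $x^-$ through $p$ to $x^+$, then along $R$) is a straight ray only if $R$ points in the direction $v$ --- and you cannot insist on that, because $v$ is forced to be orthogonal to $t$ while $R$ must be generic, so as written $R'$ does not compute $\phi(x^-)$. The fix is to drop the orthogonality, which plays no role: choose a single direction $u$, transversal to $t$ and generic for points near $p$, and set $x^{\pm}=p\pm\epsilon u$. Then the ray from $x^-$ in direction $u$ is literally the segment $[x^-,x^+]$ followed by the ray from $x^+$ in direction $u$, it meets $\Delta$ in exactly one more point (namely $p$) than the latter, and $\phi(x^-)=\phi(x^+)+1$ follows. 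This is also, implicitly, how the paper's closing step should be read: the straddling points are best compared along the ray direction itself rather than along the normal of $t$. With that adjustment (or, alternatively, after extending well-definedness of $\phi$ to polygonal generic paths to infinity), your argument is complete, and your observation that no connectivity of $\Delta$ is required matches the paper, whose Proposition~\ref{prop:divides} indeed assumes none.
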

\begin{proofwithoutqed}
This proof is a variation of a proof of Jordan curve theorem for polygonal paths that can be found in Courant~et~al.~\cite{wim}.
First, we introduce some notation. Let $t$ be a triangle. We denote by $\mathring{t}$ the interior of $t$, i.e., $\mathring{t}:=t\setminus(e_1\cup e_2\cup e_3)$ where $e_1,e_2,e_3$ are the edges of $t$. Let $e$ be an edge. We denote by $\mathring{e}$ the interior of $e$, i.e., $\mathring{e}:=e\setminus(v_1\cup v_2)$ where $v_1,v_2$ are the vertices of $e$.

Let $r$ be a vector in $\mathbb{R}^3$ that is neither parallel with a triangle nor an edge of $\Delta$. Let $x$ be a point of $\mathbb{R}^3\setminus\Delta$. Let $R(x)$ be the ray from $x$ in direction $r$. 
Suppose that $R(x)$ does not intersect any vertex of $\Delta$.
We define the following quantities:
Let $I_T(R(x),\Delta)$ denote the number of intersection of $R(x)$ with interiors of triangles of $\Delta$.
Let $e$ be an edge of $\Delta$ that is intersected by $R(x)$. Let $H$ be the plane defined by the edge $e$ and the ray $R(x)$. Let $n$ be the number of triangles incident with $e$ on one side of $H$ and $m$ number of triangles on the other side of $H$. Then we define $I_e(R(x),\Delta)$ as the minimum of $n$ and $m$.
Let $I_E(R(x),\Delta)$ be the sum of $I_e(R(x),\Delta)$ over all edges of $\Delta$ that are intersected by $R(x)$ on interiors.


We define the sum $I(R(x),\Delta):=I_T(R(x),\Delta)+I_E(R(x),\Delta)$ and the parity of $x$ as $P(R(x),\Delta):=I(R(x),\Delta)\bmod 2$.

Let $P$ be a polygonal path in $\mathbb{R}^3\setminus\Delta$.
We show that all points of $P$ have the same parity.
First, we prove the following lemma.
\end{proofwithoutqed}

\begin{lem}
Let $x$ and $x'$ be points in $\mathbb{R}^3$ such that
\begin{enumerate}
 \item the segment $xx'$ does not intersect $\Delta$,
  \item $R(x')$ intersect at least one edge,
 \item $R(x')$ does not intersect a vertex,
 \item $R(y)$ does not intersect an edge for $y\in xx'\setminus x'$.
\end{enumerate}
Then $P(R(x),\Delta)=P(R(y),\Delta)$ for all $y\in xx'$.
\end{lem}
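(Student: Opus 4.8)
The plan is to treat the endpoint $x'$ as the only place where the counted quantities can change, and to show that the parity survives this single event. Accordingly I would split the closed segment into the relatively open part $xx'\setminus x'$ and the endpoint $x'$. On $xx'\setminus x'$ the ray $R(y)$ meets no edge (hypothesis~4), hence no vertex (a vertex lies on an edge), and the base point $y$ never meets $\Delta$ (hypothesis~1). Consequently $I_E(R(y),\Delta)=0$ there, and the only way $I_T(R(y),\Delta)$ could change as $y$ varies is for a crossing with some triangle interior to appear or disappear. Since $r$ is generic (so crossings are transversal) and the triangles of $\Delta$ are bounded (so no crossing escapes to infinity as $y$ moves along the compact segment), such a change can occur only when the crossing point reaches the boundary of its triangle, i.e. when $R(y)$ meets an edge. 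As this is excluded, $I_T(R(y),\Delta)$ is locally constant, hence constant on the connected set $xx'\setminus x'$, and so $P(R(y),\Delta)=P(R(x),\Delta)$ for every $y\in xx'\setminus x'$.

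It then remains to show $P(R(x'),\Delta)=P(R(y),\Delta)$ for $y\in xx'$ close to $x'$. Here $R(x')$ passes through the interiors of one or more edges $e$ (hypotheses~2 and~3). Away from these edges the crossings of $R(x')$ with triangle interiors are transversal, hence persist under a small perturbation of the base point; they therefore contribute equally to $I(R(x'),\Delta)$ and to $I(R(y),\Delta)$. The entire discrepancy is thus localized at the edges $e$ met by $R(x')$, and it suffices to analyze one such edge $e$ and sum the contributions.

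For the local analysis at $e$ I would project $\mathbb{R}^3$ orthogonally along the line carrying $e$. The edge collapses to a point $\bar p$, the triangles incident with $e$ become rays emanating from $\bar p$, and the plane $H$ spanned by $e$ and $R(x')$ projects to the line through $\bar p$ in the projected direction of $r$; this line separates the incident triangles into the $n$ lying on one side of $H$ and the $m$ lying on the other, exactly as in the definition of $I_e$. Since for $y\ne x'$ the ray $R(y)$ misses $e$ (hypothesis~4), its projection is a parallel ray passing to one definite side of $\bar p$, and it crosses precisely the incident triangles on that side, namely $n$ or $m$ of them. Thus near $e$ the quantity $I_T$ jumps by $n$ (or by $m$) in passing from $x'$ to $y$, whereas at $x'$ the edge contributes $\min(n,m)$ to $I_E$ while its incident triangles contribute nothing to $I_T$ (the ray lies on their common boundary $e$). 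The change in $I$ attributable to $e$ is therefore $n-\min(n,m)$ or $m-\min(n,m)$, which equals $0$ or $|n-m|$; and because every edge of $\Delta$ has even degree, $n+m=\deg(e)$ is even, so $n\equiv m\pmod 2$ and this change is even. Summing the even contributions over all edges met by $R(x')$ gives $I(R(y),\Delta)\equiv I(R(x'),\Delta)\pmod 2$, whence $P(R(x'),\Delta)=P(R(y),\Delta)=P(R(x),\Delta)$ and the lemma follows for all $y\in xx'$.

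I expect the geometric bookkeeping at the edge to be the main obstacle: making precise, through the projection, that a small one-sided displacement of the base point converts the single edge crossing into transversal crossings of exactly the $n$ (or $m$) incident triangles on the corresponding side of $H$, with no spurious crossings created or destroyed elsewhere. Once this local picture is justified, the parity cancellation is immediate from the even-degree hypothesis, which is precisely the reason $\min(n,m)$ is used in the definition of $I_e$.
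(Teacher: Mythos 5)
Your proof is correct and takes essentially the same route as the paper's: parity is constant on $xx'\setminus x'$ because no edge is met there, and the change at $x'$ is localized at the crossed edges, where each edge $e$ alters $I$ by $n_e-\min\{n_e,m_e\}$ or $m_e-\min\{n_e,m_e\}$, an even number since $\deg(e)=n_e+m_e$ is even. Your projection along the edge just makes explicit the geometric bookkeeping behind the paper's displayed formula for $I(R(x'),\Delta)$.
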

\begin{proof}
All points of $xx'$ except $x'$ have the same parity, since the parity can only change when the ray hits or leave an edge. A nontrivial case is to show that $x$ and $x'$ have the same parity.
Let $E(R(x'))$ be the set of edges of $\Delta$ that are intersected by $R(x')$.
Let $H(R,e)$ be the hyperplane defined by $R$ and $e$. Let $n_e$ be the number of triangles incident with $e$ on the same side of $H_e$ as $x$ and let $m_e$ denote the number of triangles incident with $e$ on the other side of $H_e$.

By definition,  
$$I(R(x),\Delta)=I_T(R(x),\Delta)+I_E(R(x),\Delta),$$
and
\begin{equation}
\begin{split}
I(R(x'),\Delta)=&I_T(R(x),\Delta)-\sum_{e\in E(R(x'))}n_e+
I_E(R(x),\Delta)+\sum_{e\in E(R(x'))}\min\{n_e,m_e\}
\end{split}
\end{equation}
Since every edge $e$ of $\Delta$ has an even degree, $n_e+m_e$ is even. Hence, $n_e\equiv m_e\bmod 2$. Therefore $I(R(x),\Delta)\equiv I(R(x'),\Delta)\bmod 2$ and $P(R(x),\Delta)=P(R(x'),\Delta)$.
\end{proof}
By repeatedly using the above lemma, we get the following corollary.
\begin{cor}
 Let $P$ be a polygonal path such that $P\cap\Delta=\emptyset$ and no ray from any point of $P$ hits a vertex of $\Delta$. Then all points of $P$ have the same parity.\qed
\end{cor}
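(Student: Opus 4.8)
The plan is to reduce the statement to a single segment of $P$ and then to the finitely many instants at which the moving ray meets an edge. Since $P$ is a concatenation of finitely many segments that share consecutive endpoints, it is enough to show that the two endpoints of each individual segment have the same parity; composing these equalities along $P$ then yields the claim. Fix one segment and parametrise it as $\gamma(t)$, $t\in[0,1]$; note that $\gamma([0,1])\subseteq P$ is disjoint from $\Delta$, so $P(R(\gamma(t)),\Delta)$ is defined for every $t$, and by hypothesis no $R(\gamma(t))$ meets a vertex.

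First I would pin down the \emph{events}, the parameters $t$ for which $R(\gamma(t))$ meets some edge of $\Delta$. For a fixed edge $e$, the base points whose ray hits the line through $e$ fill the plane spanned by $e$ and the direction $r$; since $r$ is parallel to no edge this is an honest plane, so as long as the segment does not lie inside it the segment meets it in a single point. Thus each edge produces at most one event and there are only finitely many events $0<t_1<\dots<t_k<1$. On each open interval $(t_i,t_{i+1})$ the ray crosses no edge, so $I_E(R(\gamma(t)),\Delta)$ is constant; and because no vertex is ever met, $I_T(R(\gamma(t)),\Delta)$ can change only when the ray passes through the boundary of a triangle, that is through an edge, hence it too is constant there. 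Therefore $P(R(\gamma(t)),\Delta)$ is constant on every such open interval, exactly as observed inside the proof of the preceding lemma.

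It remains to bridge each event $t_i$, and here I would invoke the preceding lemma twice. Taking $x=\gamma(t_i-\delta)$ for small $\delta$ and $x'=\gamma(t_i)$, all four hypotheses hold --- the subsegment avoids $\Delta$, its ray meets an edge only at the endpoint $x'$, and no vertex is met --- so $P(R(x),\Delta)=P(R(x'),\Delta)$. Applying the lemma a second time to the reversed subsegment from $x''=\gamma(t_i+\delta)$ back to the same $x'$ gives $P(R(x''),\Delta)=P(R(x'),\Delta)$. Combining the two identities shows the parity agrees immediately before and after $t_i$, so it is in fact constant across $t_i$; ranging over $t_1,\dots,t_k$ makes the parity constant on the whole segment, whence the two endpoints agree.

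I expect the genuine difficulty to be the bookkeeping of events rather than any new idea. The one delicate point is transversality: I must rule out a segment lying in the sweep-plane of an edge, where the ray would graze that edge over a whole subinterval of $t$; this is handled by choosing the fixed direction $r$ generically (the forbidden directions form a measure-zero set) or, equivalently, by a small perturbation of the interior vertices of $P$ inside the complement of $\Delta$, which leaves the endpoints and their parities untouched. Simultaneous hits of several edges at one $t_i$ cause no trouble, since the preceding lemma is already stated for the entire set $E(R(x'))$ of edges met by the ray.
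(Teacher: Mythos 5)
Your proposal is correct and follows exactly the route the paper intends: the paper dismisses this corollary with the single remark ``by repeatedly using the above lemma,'' and your argument is precisely that iteration, spelled out --- finitely many edge-crossing events along each segment, parity constant between events, and the lemma applied once from each side to bridge each event. Your extra care about the degenerate case (a segment lying in the plane spanned by an edge and $r$, handled by perturbing interior vertices of $P$ within the complement of $\Delta$) is a legitimate refinement of a detail the paper silently ignores, and does not constitute a different approach.
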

\begin{cor}
Let $x$ be a point from $\mathbb{R}^3$ such that $\Delta\cap x=\emptyset$ and $R(x)$ hits a vertex of $\Delta$. Then there is a neighborhood $U(x)$ of $x$ such that all points from $U(x)\setminus x$ have the same parity.
\end{cor}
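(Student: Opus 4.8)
The plan is to exhibit a small neighborhood $U(x)$ in which the only points whose ray meets a vertex lie on a single line $\ell$ through $x$, and then to invoke the fact that deleting a line from a three-dimensional ball leaves a connected set; the preceding corollary will then force the parity to be constant there. The even-degree hypothesis will not be needed again in this step, since it has already been spent in the lemma underlying the preceding corollary.

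First I would record that every ray has the fixed direction $r$, so $R(y)$ is the translate of $R(x)$ by $y-x$, and $R(y)$ meets a vertex $w$ precisely when $y$ lies on the ray $L_w:=\{w-tr : t\geq 0\}$. If $v_1,\dots,v_k$ are the vertices met by $R(x)$, then each $v_i$ lies on the line $\ell:=\{x+sr : s\in\mathbb{R}\}$, and $L_{v_i}$ is a sub-ray of $\ell$; hence near $x$ every point whose ray meets one of $v_1,\dots,v_k$ lies on $\ell$. Next I would fix $U(x)$ to be a small open ball about $x$. Since $x\notin\Delta$ and $\Delta$ is a finite, hence closed, union of simplices, I may take $U(x)$ disjoint from $\Delta$. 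For a vertex $w$ not met by $R(x)$ we have $x\notin L_w$; as $L_w$ is closed and the vertices are finite in number, after shrinking $U(x)$ it meets no such $L_w$. Consequently, for $y\in U(x)$ the ray $R(y)$ meets a vertex only if $y\in\ell$, and the points of $U(x)\setminus x$ at which the parity is defined are exactly the points of $U(x)\setminus\ell$.

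It then remains to see that all these points share one parity, and here dimension three is essential: the line $\ell$ has codimension two, so $U(x)\setminus\ell$, a ball with a diameter deleted, is connected, whereas the planar analogue of a disk minus a diameter would split into two halves. Being open and connected, $U(x)\setminus\ell$ is polygonally connected, so any two of its points are joined by a polygonal path lying entirely in $U(x)\setminus\ell$; such a path avoids $\Delta$ and no ray from any of its points meets a vertex. The preceding corollary then yields that the two endpoints have equal parity, so the parity is constant on $U(x)\setminus\ell$, which is precisely the set of points of $U(x)\setminus x$ at which the parity is defined.

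I expect the only delicate step to be confining the bad points to the single line $\ell$. This rests on two observations: that all rays are parallel, so that the bad rays $L_{v_1},\dots,L_{v_k}$ of the met vertices collapse to sub-rays of the common line $\ell$ near $x$, and that a finiteness argument, using $x\notin L_w$ for each unmet vertex $w$, rules out the bad rays of the remaining vertices inside a small enough $U(x)$. Once this is in place, the conclusion is a purely topological consequence of the connectivity of a punctured three-ball together with the preceding corollary.
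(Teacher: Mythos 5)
Your proof is correct and follows the same basic route as the paper's: join points of the punctured neighborhood by polygonal paths and invoke the preceding corollary. However, your version is more careful, and in fact it repairs a genuine imprecision in the paper's own two-line argument. The paper starts with ``Let $U(x)$ be a neighborhood of $x$ such that $R(y)$ does not hit a vertex of $\Delta$ for $y\in U(x)\setminus x$'' --- but no such neighborhood exists: if $R(x)$ meets a vertex $v$, then every point $y=x-sr$ with $s>0$ small, and every $y=x+sr$ close enough to $x$, also has $v\in R(y)$, so the set of points whose ray meets a vertex is not $\{x\}$ alone but the whole piece of the line $\ell=\{x+sr : s\in\mathbb{R}\}$ inside $U(x)$. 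Your proposal pins this down exactly: the bad set inside a small ball is $\ell$ (the lines $L_w$ of unmet vertices being excluded by closedness and finiteness), the parity is defined precisely on $U(x)\setminus\ell$, and since an open ball minus a line in $\mathbb{R}^3$ is open and connected, hence polygonally connected, the preceding corollary forces the parity to be constant there. This also makes explicit both the correct reading of the statement (``all points at which the parity is defined'') and the place where dimension three is genuinely used --- a disk minus a diameter in the plane would disconnect --- neither of which the paper's proof articulates. So: same approach, but yours is the rigorous version of it.
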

\begin{proof}
Let $U(x)$ be a neighborhood of $x$ such that $R(y)$ does not hit a vertex of $\Delta$ for $y\in U(x)\setminus x$. We can connect any two points of $U(x)\setminus x$ by a polygonal path and use the previous corollary.
\end{proof}

Let $x$ be a point of $\mathbb{R}^3$ such that $R(x)$ intersect a vertex of $\Delta$. We define the parity of $x$ to be the same as a parity of a sufficiently small neighborhood of $x$.

\begin{cor}
 Let $P$ be a polygonal path such that $P\cap\Delta=\emptyset$. Then all points of $P$ have the same parity.\qed
\end{cor}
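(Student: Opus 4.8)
The plan is to promote the two preceding corollaries to the general case by showing that the parity is \emph{locally constant} along $P$ and then invoking connectedness. Define $f\colon P\to GF(2)$ by $f(x):=P(R(x),\Delta)$, where for those points $x$ whose ray $R(x)$ meets a vertex of $\Delta$ we take for $f(x)$ the value assigned just above, namely the common parity of a punctured neighborhood of $x$. Since a polygonal path is connected, it suffices to prove that every point of $P$ has a neighborhood in $P$ on which $f$ is constant.

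First I would observe that the set $W:=\bigcup_{v}\{\,v-tr\mid t\ge 0\,\}$, the union over the finitely many vertices $v$ of $\Delta$ of the rays whose base points are carried onto $v$ by the direction $r$, is a finite union of closed rays and hence closed; its complement is therefore open. Now fix $x\in P$. If $R(x)$ misses every vertex, then $x$ lies in the open complement of $W$ and, since $x\notin\Delta$, I can choose a ball $N(x)$ disjoint from both $\Delta$ and $W$. Any point of $N(x)\cap P$ is joined to $x$ by a straight sub-segment of $P$; this sub-segment is a polygonal path disjoint from $\Delta$ none of whose rays hits a vertex, so the corollary for vertex-free paths gives $f\equiv f(x)$ on $N(x)\cap P$.

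If instead $R(x)$ hits a vertex, then the preceding corollary supplies a neighborhood $U(x)$ on which all points of $U(x)\setminus x$ share a single parity, and $f(x)$ was \emph{defined} to be exactly that value. Consequently $f$ is constant, equal to $f(x)$, on all of $U(x)\cap P$, including at $x$ itself. In both cases $f$ is constant on a neighborhood of $x$ in $P$, so $f$ is locally constant, and a locally constant $GF(2)$-valued function on the connected set $P$ is constant; this is the assertion.

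I expect the only delicate step to be the treatment of the points whose ray hits a vertex, since there $f$ is not defined directly but through the neighborhood procedure of the preceding corollary. The whole force of the argument is that this procedure makes the value assigned at such a point agree with the parity of every nearby point, so that it glues seamlessly with the vertex-free arcs on either side. Casting the argument as local constancy rather than as a decomposition of $P$ into finitely many arcs has the further advantage of sidestepping the degenerate possibility that a segment of $P$ is collinear with one of the rays of $W$, because the neighborhood corollary is pointwise and never requires the vertex-ray points to be isolated.
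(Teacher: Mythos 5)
Your proof is correct, and it supplies an argument that the paper omits entirely: the paper states this corollary without proof, treating it as an immediate consequence of the two preceding corollaries together with the convention that defines the parity of a point whose ray meets a vertex. Your local-constancy-plus-connectedness formulation is precisely the intended glue, and making the bad set $W$ explicit (a finite union of closed rays, hence closed, with open complement) is a genuine improvement; as you note, it also handles the degenerate situation in which a whole sub-segment of $P$ lies inside $W$, where a decomposition of $P$ into finitely many arcs separated by isolated bad points would break down. Two small repairs are needed. First, in the vertex-free case you claim that a point $y\in N(x)\cap P$ is joined to $x$ by a straight sub-segment of $P$; this is false when $P$ re-enters the ball $N(x)$ along a later leg, but it is also unnecessary: the chord from $x$ to $y$ lies in the convex set $N(x)$, hence is a polygonal path disjoint from $\Delta$ and from $W$, and the vertex-free corollary applies to that chord whether or not it is contained in $P$. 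Second, in the vertex case, a point $y\in U(x)\cap P$ with $y\neq x$ may itself belong to $W$, in which case $f(y)$ is defined through the punctured neighborhood of $y$ rather than directly; one extra line closes this: choose $U(y)\subseteq U(x)$, observe that the points of $U(y)\setminus\{y\}$ lie in $U(x)\setminus\{x\}$ and therefore carry the common parity of $U(x)\setminus\{x\}$, so $f(y)$ equals that common value, as required. With these two touches your argument is complete and, if anything, more careful than what the paper implicitly relies on.
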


\begin{proof}[Finish of the proof of Proposition~\ref{prop:divides}]
Any two points of a connected region of triangular configuration can be connected by a polygonal path. Hence any two points of a connected region have the same parity.

Let $a$ and $b$ be two different points of $\mathbb{R}^3$ such that $a$ and $b$ lie close to a triangle $t$ of $\Delta$ and the segment from $a$ to $b$ intersects $\Delta$ only on the interior of $t$. Then $a$ and $b$ have different parities. Hence, $\Delta$ has at least two cells.
\end{proof}
\subsubsection{Proof of Theorem~\ref{thm:embhas2bas}}
\label{sec:proofembhas2bas}
\begin{prop}
\label{prop:independent}
Let $\Delta$ be a triangular configuration embedded into $\mathbb{R}^3$. Then the set $\mathcal{S}$ of characteristics vectors of elementary strong boundaries of $\Delta$ is linear independent.
\end{prop}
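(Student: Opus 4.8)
The plan is to translate the statement into graph language: I would set up a bijection between the elementary strong boundaries of $\Delta$ and the bounded cells of $\Delta$, and then recognize the claimed independence as the connectedness of the cell--adjacency graph of $\Delta$.

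First I would reduce to the case that $\Delta$ is connected. Every strong boundary is itself a connected subconfiguration (any two of its triangles lie on the common strong boundary formed by the boundary itself), so every elementary strong boundary is contained in a single connected component of $\Delta$. Since distinct components share no triangle, the characteristic vectors of elementary strong boundaries lying in different components have disjoint supports, and global linear independence follows from independence within each component. Hence we may assume $\Delta$ is connected.

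The heart of the argument is the following refinement of Lemma~\ref{lem:cellelembound}: the assignment $C\mapsto\inter{C}$ is a bijection from the elementary strong boundaries of $\Delta$ onto the bounded cells of $\Delta$, and for each such $C$ the set $T(C)$ is exactly the set of triangles of $\Delta$ incident with the cell $\inter{C}$. Lemma~\ref{lem:cellelembound} already supplies one direction. For the converse I would show that elementariness forces $\inter{C}$ to contain no triangle of $\Delta$ in its interior: if some triangle $t'$ lay inside $\inter{C}$, then by Lemma~\ref{lem:elembound2} it would lie on an elementary strong boundary $C_1\subseteq\overline{\inter{C}}$ with $\inter{C_1}\subseteq\inter{C}$, and one checks $\inter{C}\cap\ext{C_1}\neq\emptyset$ (otherwise $\inter{C}\subseteq\overline{\inter{C_1}}$ would force $T(C)\subseteq T(C_1)$ with $C\neq C_1$, making $C$ a proper subconfiguration of the strong boundary $C_1$, which contradicts that every subconfiguration of $C_1$ has strictly fewer cells). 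Together with $\inter{C}\cap\inter{C_1}=\inter{C_1}\neq\emptyset$ this exhibits a strong boundary violating the elementariness of $C$. Consequently $\inter{C}$ is a single cell of $\Delta$, and by Proposition~\ref{prop:closureboundary} its incident triangles are precisely $T(C)$; injectivity of $C\mapsto\inter{C}$ is then immediate. Writing $\operatorname{inc}(X)$ for the incidence vector of a cell $X$ (over $GF(2)$, with a $1$ in coordinate $t$ iff $t$ is incident with $X$), this says $\chi(C)=\operatorname{inc}(\inter{C})$.

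With this translation the claim becomes a statement about cells. By Lemma~\ref{lem:trianinc} every triangle of the connected configuration $\Delta$ is incident with exactly two cells, so $\sum_{X}\operatorname{inc}(X)=0$ over $GF(2)$, each triangle being counted twice. Suppose $\sum_{C\in\mathcal{S}'}\chi(C)=0$ for a nonempty set $\mathcal{S}'$ of elementary strong boundaries, and let $\mathcal{X}'=\{\inter{C}:C\in\mathcal{S}'\}$ be the corresponding set of bounded cells. For a triangle $t$ with incident cells $X,Y$ the coordinate condition reads $|\{X,Y\}\cap\mathcal{X}'|\equiv 0\pmod 2$; hence no triangle separates a cell of $\mathcal{X}'$ from a cell outside $\mathcal{X}'$. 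Viewing cells as vertices and triangles as edges of the cell--adjacency graph $G_\Delta$, this says $\mathcal{X}'$ is a union of connected components of $G_\Delta$. But $G_\Delta$ is connected: any two cells contain points joined by a polygonal path in $\mathbb{R}^3$ chosen generically so that it meets $\Delta$ only transversally in finitely many triangle interiors, and each crossing passes between the two cells incident with that triangle, giving a walk in $G_\Delta$. Therefore $\mathcal{X}'$ is empty or all cells; since every member of $\mathcal{X}'$ is bounded while the unbounded cell is not, the latter is impossible, so $\mathcal{X}'=\emptyset$ and $\mathcal{S}'=\emptyset$, proving independence. I expect the main obstacle to be the converse of Lemma~\ref{lem:cellelembound} in the third paragraph—verifying cleanly that elementariness prevents $\inter{C}$ from being subdivided by further triangles of $\Delta$; the connectivity of $G_\Delta$ and the final parity count are routine once that correspondence is in place.
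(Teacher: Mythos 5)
Your proof is correct in outline, but it takes a genuinely different route from the paper's. The paper argues by induction on $\lvert\mathcal{S}\rvert$: it picks an elementary strong boundary $C$ incident with the unbounded cell of $\Delta$, shows that any triangle of $C$ incident with the unbounded cell belongs to no other elementary strong boundary, so $\chi(C)$ has a coordinate that vanishes in every other member of $\mathcal{S}$ and hence lies outside their span, and then iterates. You instead establish a structural correspondence — $C\mapsto\inter{C}$ is a bijection between elementary strong boundaries and bounded cells of a connected $\Delta$, with $\chi(C)$ equal to the incidence vector of the cell $\inter{C}$ — and then read off independence from a parity count in the cell-adjacency graph $G_\Delta$: a vanishing $GF(2)$-combination corresponds to a set of bounded cells closed under adjacency across triangles, which by connectedness of $G_\Delta$ is empty or everything, and ``everything'' is impossible since the unbounded cell is excluded. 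This is precisely the three-dimensional analogue of the standard proof of Mac Lane's criterion (face boundaries of a plane graph are independent because any vanishing sum would have to include the outer face), and the bijection you prove is a stronger structural statement than anything the paper records; the paper's induction is shorter because it needs only one ``private'' triangle per step rather than the full correspondence. Your generic-path argument for connectedness of $G_\Delta$ is routine and at the same level of rigor as the paper's ray-casting in Proposition~\ref{prop:divides}.

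The soft spot is exactly where you flagged it, and it needs two repairs. First, the parenthetical claim that $\inter{C}\subseteq\overline{\inter{C_1}}$ forces $T(C)\subseteq T(C_1)$ is true but not immediate: Proposition~\ref{prop:closureboundary} only yields $C\subseteq\overline{\inter{C_1}}=\inter{C_1}\cup C_1$, so a triangle $s\in T(C)\setminus T(C_1)$ could a priori have its interior $\mathring{s}$ inside the open set $\inter{C_1}$. To exclude this, note that $C_1\subseteq\overline{\inter{C}}$ gives $\ext{C}\subseteq\ext{C_1}$ (the cell $\ext{C}$ is polygonally connected, unbounded and disjoint from $C_1$), hence $\inter{C_1}\cap\ext{C}=\emptyset$; but if $\mathring{s}\subseteq\inter{C_1}$, a small ball around an interior point of $s$ lies in $\inter{C_1}$ and, by Corollary~\ref{cor:boundtricell}, contains points of $\ext{C}$, a contradiction. (This computation also corrects your claim $\inter{C}\cap\inter{C_1}=\inter{C_1}$: what is true is $\inter{C_1}\subseteq\inter{C}\cup C$, which still gives the nonemptiness you need.) Alternatively, you may simply invoke the unproved dichotomy the paper itself uses in Lemmas~\ref{lem:elembound} and~\ref{lem:elembound2} — for distinct strong boundaries, $\ext{C}\cap\inter{C_1}\neq\emptyset$ or $\ext{C_1}\cap\inter{C}\neq\emptyset$ — which closes the argument at the paper's own level of rigor. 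Second, Lemma~\ref{lem:elembound2} applies only to triangles wholly contained in $\inter{C}$, so your argument does not cover a triangle whose interior lies in $\inter{C}$ but whose edges lie on $C$; such a membrane would still prevent $\inter{C}$ from being a single cell of $\Delta$ and would break the identity $\chi(C)=$ incidence vector of $\inter{C}$. The fix is to bypass Lemma~\ref{lem:elembound2}: take a cell $X$ of $\Delta$ incident with the offending triangle, let $C_1$ be the elementary strong boundary supplied by Lemma~\ref{lem:cellelembound}, and run the same argument. In fairness, the paper's Lemma~\ref{lem:elembound2} and Proposition~\ref{prop:boundcomb} silently ignore the same configuration, so this blind spot is inherited rather than introduced. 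With these repairs your proof is complete.
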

\begin{proof}
We prove the proposition by the induction along the size of the set $\mathcal{S}$.
If $\lvert\mathcal{S}\rvert\leq 1$, the proposition is clear.
Let $\lvert\mathcal{S}\rvert>1$ and let $\chi(C)$ be an element of $\mathcal{S}$ and let $C$ be the corresponding elementary strong boundary such that $C$ is incident with the unbounded cell of $\Delta$.
Let $t$ be a triangle of $C$ that is incident with the unbounded cell.
Now we show that the triangle $t$ belongs only to one elementary strong boundary $C$. 
For a contradiction suppose that $t$ belongs to an elementary strong boundary $C'$ of $\Delta$ different from $C$. Let $X_1$ and $X_2$ be the cell of $\Delta$ incident with $t$. One of the cells is unbounded, suppose that $X_2$ is unbounded.
Then $X_2\subseteq\inter{C}$ and $X_2\subseteq\inter{C'}$. Thus, $\inter{C}\cap\inter{C'}\neq\emptyset$. Since $C\neq C'$, $\inter{C'}\cap\ext{C}\neq\emptyset$. Hence, $C'$ is not elementary strong boundary. The contradiction. Thus, $t$ belongs to only one elementary strong boundary $C$.

Hence, $\chi(C)$ is not linear combination of the other elements $\mathcal{S}\setminus\{\chi(C)\}$.
By the induction assumption, the set $\mathcal{S}\setminus\{\chi(C)\}$ is linear independent. Hence, the set $\mathcal{S}$ is linear independent.
\end{proof}

\begin{thm}
\label{thm:s2basiscon}
Let $\Delta$ be a connected triangular configuration embedded into $\mathbb{R}^3$.
Let $\mathcal{S}$ be the set of characteristics vectors of elementary strong boundaries of $\Delta$. Then the set $\mathcal{S}$ is a 2-basis of the cycle space $\ker\Delta$ of $\Delta$.
\end{thm}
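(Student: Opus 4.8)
The plan is to verify the defining properties of a $2$-basis in turn: that $\mathcal{S}\subseteq\ker\Delta$, that $\mathcal{S}$ is linearly independent, that $\mathcal{S}$ spans $\ker\Delta$, and finally the $2$-basis condition itself. The first is immediate: every elementary strong boundary is in particular a strong boundary, so by Proposition~\ref{prop:boundeven} its set of triangles is an even subset, and hence its characteristic vector belongs to the cycle space $\ker\Delta$. Linear independence is exactly Proposition~\ref{prop:independent}. Thus the crux is spanning, and the $2$-basis property will follow afterwards from a count of the cells incident with a triangle.

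For spanning I would argue as follows. Fix a non-empty even subset $E$, so that $\chi(E)$ is an arbitrary nonzero element of $\ker\Delta$, and consider the parity function of Proposition~\ref{prop:divides} taken with respect to $\mathcal{K}(E)$ (whose edges all have even degree because $E$ is even). Since $\mathcal{K}(E)\subseteq\Delta$, every cell of $\Delta$ is contained in a single cell of $\mathcal{K}(E)$, so the parity is constant on each cell of $\Delta$; write $p(X)\in\{0,1\}$ for the parity of a cell $X$ of $\Delta$. The key property inherited from Proposition~\ref{prop:divides} is that, for two cells $X_1,X_2$ of $\Delta$ sharing a triangle $t$, one has $p(X_1)\neq p(X_2)$ precisely when $t\in E$ and $p(X_1)=p(X_2)$ otherwise. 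Moreover every unbounded cell of $\Delta$ has parity $0$, since points sufficiently far away lie in the unbounded region of $\mathcal{K}(E)$ and there receive parity $0$.

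Now let $\mathcal{B}$ be the collection of bounded cells $X$ of $\Delta$ with $p(X)=1$; by Lemma~\ref{lem:cellelembound} each such $X$ determines an elementary strong boundary $C_X$ consisting of the triangles incident with $X$. I claim $\chi(E)=\sum_{X\in\mathcal{B}}\chi(C_X)$ over $GF(2)$. To see this, fix a triangle $t$; by Lemma~\ref{lem:trianinc} it is incident with exactly two cells $X_1,X_2$ of $\Delta$, and the coefficient of $t$ on the right-hand side is the number of these two cells lying in $\mathcal{B}$, reduced modulo $2$. Since every unbounded cell has parity $0$, a cell incident with $t$ lies in $\mathcal{B}$ if and only if it has parity $1$; hence this coefficient equals $1$ iff exactly one of $p(X_1),p(X_2)$ equals $1$, i.e.\ iff $p(X_1)\neq p(X_2)$, i.e.\ iff $t\in E$. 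This matches the coefficient of $t$ in $\chi(E)$, proving the claim and hence that $\mathcal{S}$ spans $\ker\Delta$.

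Together with Proposition~\ref{prop:independent} this already shows $\mathcal{S}$ is a basis; in fact, since the boundaries $C_X$ used above lie in $\mathcal{S}$ and already span, independence forces every elementary strong boundary to arise as the star $C_X$ of a bounded cell. The $2$-basis property is then immediate: a triangle $t$ is nonzero in $\chi(C_X)$ exactly when $t$ is incident with the bounded cell $X$, and by Lemma~\ref{lem:trianinc} $t$ is incident with only two cells of $\Delta$, so it lies in at most two elementary strong boundaries. I expect the main obstacle to be the spanning step — specifically, making rigorous that the parity function descends to a well-defined $\{0,1\}$-labelling of the cells of $\Delta$ which flips exactly across the triangles of $E$, and that the unbounded cells receive parity $0$; once these two facts are in hand, the identity $\chi(E)=\sum_{X\in\mathcal{B}}\chi(C_X)$ is pure bookkeeping on the two cells incident with each triangle.
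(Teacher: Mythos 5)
Your proof is correct, but it takes a genuinely different route from the paper's in the two substantive steps. For spanning, the paper peels off strong boundaries iteratively: it finds a strong boundary $C_1$ inside $\mathcal{K}(E_0)$, replaces $E_0$ by $E_0\bigtriangleup E_1$, repeats until the even set is exhausted, and then converts each strong boundary into a sum of elementary ones via Proposition~\ref{prop:boundcomb} (which in turn rests on Lemmas~\ref{lem:elembound} and~\ref{lem:elembound2}). You instead produce a closed-form decomposition in one shot: the ray-crossing parity of $\mathcal{K}(E)$ induces a $\{0,1\}$-labelling of the cells of $\Delta$ that flips exactly across triangles of $E$ and vanishes on the unbounded cell, whence $\chi(E)=\sum_{p(X)=1}\chi(C_X)$ by counting the two cells (Lemma~\ref{lem:trianinc}) incident with each triangle. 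This is the direct analogue of the classical plane-graph fact that a cycle is the sum of the boundaries of the faces it encloses, and it bypasses Proposition~\ref{prop:boundcomb} and Lemmas~\ref{lem:elembound}--\ref{lem:elembound2} entirely; it also yields, as a byproduct forced by Proposition~\ref{prop:independent}, that \emph{every} elementary strong boundary is the star $C_X$ of a bounded cell, which makes your count for the 2-basis property cleaner than the paper's rather terse closing argument. The price is that you use the internal machinery of the \emph{proof} of Proposition~\ref{prop:divides} (parity constant on cells, parity flip across a triangle of $\mathcal{K}(E)$, no flip across a triangle of $\Delta\setminus\mathcal{K}(E)$), not merely its statement, so those crossing facts must be extracted and stated as a lemma; as you note, this is routine (a short transverse segment through an interior point of $t$ meets $\mathcal{K}(E)$ only in $t$ when $t\in E$, and not at all when $t\notin E$, since simplices of a complex meet in common faces). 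By contrast, the paper uses Proposition~\ref{prop:divides} purely as a black box, at the cost of the iterative termination argument and the extra elementary-boundary lemmas. Both routes share the same foundation: Propositions~\ref{prop:boundeven} and~\ref{prop:independent}, Lemma~\ref{lem:cellelembound}, and Lemma~\ref{lem:trianinc}.
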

\begin{proof}
Let $\chi(C_0)$ be an element of $\ker\Delta$ and let $E_0$ be the subset of triangles of $\Delta$ such that $C_0=\mathcal{K}(E_0)$. The set $E_0$ is an even subset of $\Delta$.

By Proposition~\ref{prop:divides}, the triangular configuration $\mathcal{K}(E_0)$ has at least two cells.
Therefore, $\mathcal{K}(E_0)$ contains a strong boundary $C_1$. Let $E_1$ be the subset of triangles of $\Delta$ such that $C_1=\mathcal{K}(E_1)$.
By Proposition~\ref{prop:boundeven}, the set $E_1$ is an even subset.
The symmetric difference $E_0\bigtriangleup E_1$ is also even subset.
For $i=2,\dots,k$ we define the sets $E_i$ in the following way:
Until $E_{0}\bigtriangleup\dots\bigtriangleup E_{i-1}\neq\emptyset$ we set $E_i$ to be the set of triangles of a strong boundary contained in $\mathcal{K}(E_{0}\bigtriangleup\dots\bigtriangleup E_{i-1})$.
The triangular configuration $\mathcal{K}(E_{0}\bigtriangleup\dots\bigtriangleup E_{i-1})$ contains a strong boundary, because $E_{0}\bigtriangleup\dots\bigtriangleup E_{i-1}$ is even subset and by Proposition~\ref{prop:divides}, triangular configuration $\mathcal{K}(E_{0}\bigtriangleup\dots\bigtriangleup E_{i-1})$ has at least two cells.
Since $\Delta$ is finite, this sequence of even subsets is finite.

Thus, the set $E_0$ is the symmetric difference of the even subsets $E_1,\dots,E_k$ and $\chi(C_0)=\chi(C_1)+\dots+\chi(C_k)$ over $GF(2)$.
By proposition~\ref{prop:boundcomb}, characteristics vector of each strong boundary $\chi(C_i)$, $i=1,\dots,k$; is a linear combination of characteristics vectors of elementary strong boundaries over $GF(2)$. Therefore, $\chi(C)$ is a linear combination of characteristics vectors of elementary strong boundaries $\mathcal{S}$. By Proposition~\ref{prop:independent}, the set $\mathcal{S}$ is linear independent. Thus, the set $\mathcal{S}$ is a basis.

Every strong boundary has exactly two cells.
By definition of elementary strong boundary, the inner cell of any elementary strong boundary contains no triangle of other strong boundary.
Hence, every triangle of $\Delta$ is contained in at most two elementary strong boundaries and at most two characteristics vectors of elementary strong boundaries are non-zero on the same coordinate.
Thus, the set $\mathcal{S}$ is a 2-basis.
\end{proof}

\begin{thm}
\label{thm:s2basis}
Let $\Delta$ be a triangular configuration embedded into $\mathbb{R}^3$. Then the cycle space of $\Delta$ has a 2-basis.
\end{thm}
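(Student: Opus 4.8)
The plan is to reduce the general statement to the connected case already settled in Theorem~\ref{thm:s2basiscon}. First I would dispose of triangles that play no role in the cycle space: a triangle $t$ lying in no strong boundary of $\Delta$ has identically zero coordinate on $\ker\Delta$. Indeed, if $t$ occurred in some even subset $E$, then by Proposition~\ref{prop:divides} the configuration $\mathcal{K}(E)$ has at least two cells, and peeling off strong boundaries exactly as in the proof of Theorem~\ref{thm:s2basiscon} exhibits $E$ as a symmetric difference of strong boundaries, so $t$ would lie in at least one of them. Hence such triangles can be discarded without changing $\ker\Delta$ or affecting the $2$-basis property.

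Next I would split the surviving triangles into the connected components $\Delta_1,\dots,\Delta_m$ of $\Delta$ and claim that $\ker\Delta=\bigoplus_{i=1}^{m}\ker\Delta_i$, where each summand is viewed inside $GF(2)^{T(\Delta)}$ by padding with zeros on the coordinates outside $\Delta_i$. The engine is again the peeling argument: by Proposition~\ref{prop:divides} and Proposition~\ref{prop:boundeven} every element of $\ker\Delta$ is a symmetric difference of characteristic vectors of strong boundaries, and each strong boundary $C$ is itself connected. Here I would use the clean observation that $C$ is a strong boundary of itself, so any two triangles of $C$ share the common strong boundary $C$; consequently each strong boundary is confined to a single connected component, and therefore every cycle of $\Delta$ restricts coordinatewise to a cycle of each $\Delta_i$.

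Granting the decomposition, the conclusion is immediate. Theorem~\ref{thm:s2basiscon} supplies a $2$-basis $\mathcal{S}_i$ of each $\ker\Delta_i$, and I would take $\mathcal{S}=\bigcup_{i=1}^{m}\mathcal{S}_i$. Because the components are triangle-disjoint, every coordinate belongs to at most one $\Delta_i$, so it is non-zero in at most the two members of $\mathcal{S}_i$ permitted by the $2$-basis property of $\mathcal{S}_i$; linear independence across components is automatic from the disjoint supports, and spanning follows from the direct-sum decomposition. Thus $\mathcal{S}$ is a $2$-basis of $\ker\Delta$.

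I expect the main obstacle to be the decomposition $\ker\Delta=\bigoplus_i\ker\Delta_i$, and within it the point that distinct connected components are pairwise triangle-disjoint. This reduces to showing that the relation ``lie in a common strong boundary,'' restricted to triangles that occur in some strong boundary, partitions them into blocks---equivalently, that if strong boundaries $A$ and $B$ share a triangle, then any triangle of $A$ and any triangle of $B$ again share a common strong boundary, which one can try to locate among the strong boundaries appearing in $A\bigtriangleup B$ together with $A$ and $B$. Once this disjointness is established, the remainder is bookkeeping.
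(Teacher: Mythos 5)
Your proposal follows essentially the same route as the paper: decompose $\Delta$ into connected components, apply Theorem~\ref{thm:s2basiscon} to each component, and take the union of the resulting 2-bases, with linear independence coming from disjoint supports and spanning from the componentwise decomposition of cycles. The paper's own proof is in fact terser than yours---it leaves implicit the direct-sum decomposition of $\ker\Delta$, the disjointness of components, and the treatment of triangles lying in no strong boundary, all of which you spell out (and correctly flag as the points needing care).
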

\begin{proof}
Let $\Delta_1,\dots,\Delta_m$ be connected components of the triangular configuration $\Delta$. Let $B_1,\dots,B_m$ be bases of $\Delta_1,\dots,\Delta_m$, respectively, provided by Theorem~\ref{thm:s2basiscon}. Since characteristics vectors that corresponds to strong boundaries from different connected components have no common non-zero coordinate, the set $B:=B_1\cup\dots\cup B_m$ is a 2-basis of the cycle space of $\Delta$.
\end{proof}


\subsection{Proof of Theorem \ref{thm:repr3d} (Representations in $\mathbb{R}^3$)}
\label{sec:repr3d}

It remains to prove sufficiency of the condition of Theorem~\ref{thm:repr3d} for geometric representations in $\mathbb{R}^3$.
We show that the construction from Rytíř~\cite{rytir2,rytir3} for binary linear codes with 2-basis can be embedded into $\mathbb{R}^3$.
\begin{figure}
 \centering
 \includegraphics{./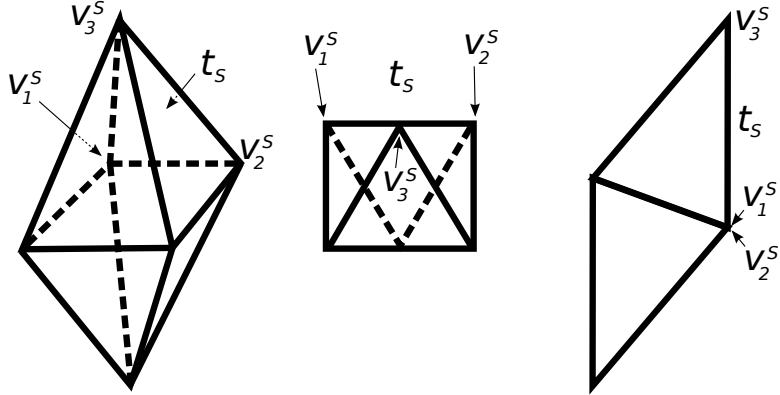}
 \caption{Eight triangles forming triangular sphere $S$. The picture on the left is a perspective view, the middle picture is a view from top, the picture on the right is a view from the right side.}
 \label{fig:basicsphere}
\end{figure}
\subsubsection{Basic building blocks}
We start with definition of basic building blocks.

\subsubsection{Triangular configuration $S^n$}
\label{sec:sn}

First, we define triangular configuration $S$ as a triangulation of a two dimensional sphere by $8$ triangles. It is depicted in Figure~\ref{fig:basicsphere}. The triangle $t_S$ has vertices $v^S_1,v^S_2,v^S_3$. All triangles of $S$ have the same size. Therefore, the size of $S$ and position of $S$ in a space is determined by the coordinates of the points $v^S_1,v^S_2,v^S_3$. We denote the triangular configuration $S$ with prescribed vertices $v^S_1=x,v^S_2=y,v^S_3=z$ by $S(x,y,z)$.
\begin{prop}
Triangular configuration $S$ can be embedded into $\mathbb{R}^3$.\qed
\end{prop}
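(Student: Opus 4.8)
The plan is to give an explicit embedding by realizing $S$ as the boundary surface of a regular octahedron, which is the standard triangulation of a two dimensional sphere by eight triangles. Concretely, I would place the six vertices at the endpoints of the coordinate axes, namely at $(\pm 1,0,0)$, $(0,\pm 1,0)$ and $(0,0,\pm 1)$, and take the eight triangles to be indexed by sign vectors $\sigma=(\sigma_1,\sigma_2,\sigma_3)\in\{+1,-1\}^3$: the triangle $t_\sigma$ is the convex hull of the three points $(\sigma_1,0,0)$, $(0,\sigma_2,0)$ and $(0,0,\sigma_3)$. This yields exactly $2^3=8$ triangles, matching Figure~\ref{fig:basicsphere}, and I would take $S$ to consist of these eight triangles together with all of their edges and vertices.

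It then remains to verify that this collection satisfies the two defining conditions of a simplicial complex. Affine independence of each triple $\{(\sigma_1,0,0),(0,\sigma_2,0),(0,0,\sigma_3)\}$ is immediate, since the three points lie on distinct coordinate axes; hence each $t_\sigma$ is a genuine $2$-simplex, and all of its faces (its three edges and three vertices) are included by construction. The closure-under-faces condition therefore holds automatically.

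The only step requiring an argument is the intersection condition, which I would settle by a short case analysis on the number of coordinates in which two sign vectors $\sigma$ and $\tau$ differ. Noting that $t_\sigma$ lies in the plane $\sigma_1 x_1+\sigma_2 x_2+\sigma_3 x_3=1$, I distinguish three cases. If $\sigma$ and $\tau$ differ in exactly one coordinate, then $t_\sigma$ and $t_\tau$ share precisely the edge spanned by the two vertices on the agreeing axes, and that edge is a common face of both. If they differ in exactly two coordinates, the two triangles meet in the single vertex on the one agreeing axis (one checks that the intersection of the two supporting planes meets each triangle only in that vertex). If $\sigma$ and $\tau$ differ in all three coordinates, then $\tau=-\sigma$ and the triangles lie in the disjoint parallel planes $\sum_i\sigma_i x_i=1$ and $\sum_i\sigma_i x_i=-1$, so they do not meet and the condition is vacuous. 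In every case the intersection is a common face, so $S$ is a simplicial complex, and since distinct triangles meet only along shared edges and vertices the realization is an embedding into $\mathbb{R}^3$.

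I do not expect a genuine obstacle here: the statement is established by a single explicit construction, and the only nontrivial point is the finite case check of the intersection condition for the octahedron, which is entirely routine.
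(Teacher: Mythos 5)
Your proposal is correct and matches the paper's (implicit) approach: the paper states this proposition with no argument at all, treating it as immediate because $S$ is \emph{defined} as the concrete eight-triangle sphere drawn in $\mathbb{R}^3$ in Figure~\ref{fig:basicsphere}, which is exactly the boundary of an octahedron. Your explicit coordinatization at $(\pm 1,0,0)$, $(0,\pm 1,0)$, $(0,0,\pm 1)$ and the three-case intersection check simply make rigorous what the paper takes as evident from the picture.
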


Let $n$ be a positive integer. We subdivide the triangle $t_S$ of $S$ in the way depicted in figure~\ref{fig:extspheresub}. Note that, the resulting object is a triangular configuration. We denote the resulting triangular configuration by $S^n$. Clearly, $S^n$ can be embedded into $\mathbb{R}^3$.
We denote the triangle $i$ of $S^n$ by $S^n(i)$, for $i=1,\dots,n$.

\begin{figure}
 \centering
 \includegraphics{./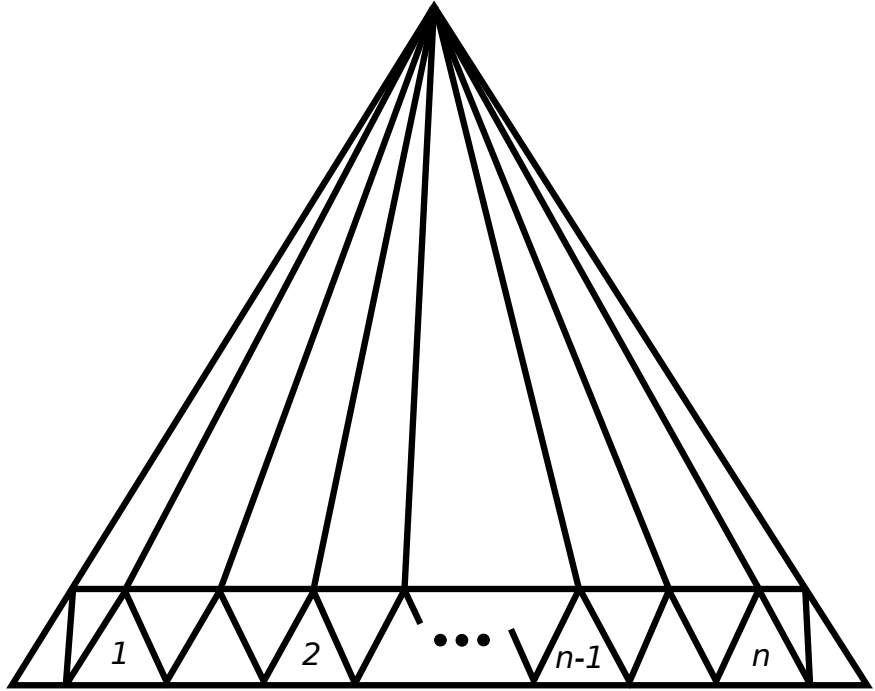}
 \caption{Subdivision of a triangle, triangles $1,\dots,n$ are equilateral.}
 \label{fig:extspheresub}
\end{figure}

\subsubsection{Triangular tunnel}
\label{sec:tunnel}
Let $t_1$ and $t_2$ be two empty triangles. Let $x_1,x_2,x_3$ be vertices of $t_1$ and $y_1,y_2,y_3$ be vertices of $t_2$.
The triangular tunnel between $t_1$ and $t_2$ denoted by $T(t_1,t_2)$ is the six triangles that form a tunnel as is depicted in Figure~\ref{fig:tunnel}. The vertices of the empty triangle $abc$ lies on the points $x_1,x_2,x_3$ and the vertices of the empty triangle $123$ lies on the points $y_1,y_2,y_3$.

\begin{figure}
 \centering
 \includegraphics{./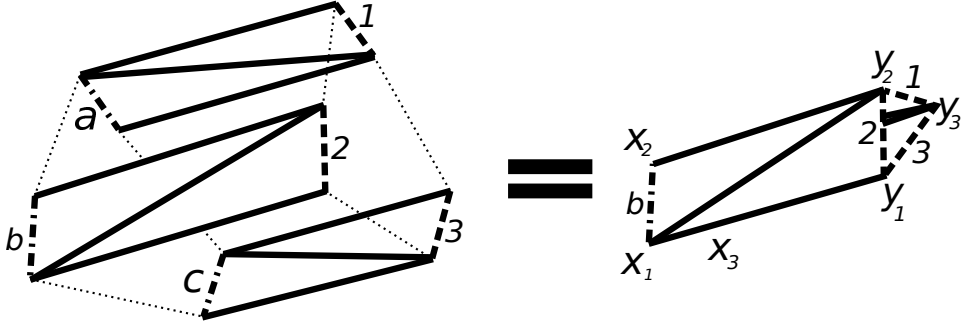}
 \caption{Triangular tunnel $T(t_1,t_2)$}
 \label{fig:tunnel}
\end{figure}

\subsubsection{Triangular tunnel bridge}
Let $t_1$ and $t_2$ be empty triangles embedded into $\mathbb{R}^3$ such that $t_1$ and $t_2$ belong to the hyperplane given by equation $x_3=0$ and one edge of both $t_1$ and $t_2$ belongs to $x_1$ axis of $\mathbb{R}^3$ and $t_2$ is a shifted copy of $t_1$ in the direction of $x_1$ axis of $\mathbb{R}^3$, $t_2=t_1+a(1,0,0)$, $a\in\mathbb{R}$. Let $l$ be the size of edge of $t_1$. We suppose that $a$ is greater than $l$. See Figure~\ref{fig:bridge1}.
\begin{figure}
 \centering
 \includegraphics{./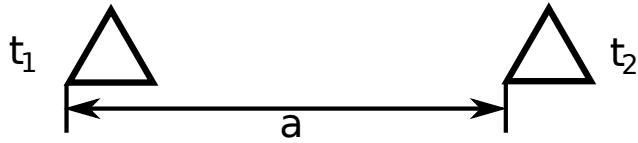}
 \caption{Empty triangles of bridge}
 \label{fig:bridge1}
\end{figure}

Let $b>a$ and $c>a$. Let $alt(t_1)$ and $alt(t_2)$ denote the altitude of $t_1$ and $t_2$, respectively. Let $t'_1$ and $t'_2$ be copies of triangle $t_1$ and $t_2$ shifted by $(0,b,0)$ with top vertex shifted by $(0,-l/2,0)$, respectively.
Let $t''_1$ be a copy of $t'_1$ shifted by $(0,0,c)$ with the left vertex shifted by $(0,0,alt(t_1))$ and let $t''_2$ be a copy of $t'_2$ shifted by $(0,0,c)$ with the right vertex shifted by $(0,0,alt(t_2))$. Then the triangular tunnel bridge is $$TB(t_1,t_2,b,c):=T(t_1,t'_1)\cup T(t'_1,t''_1)\cup T(t''_1,t''_2)\cup T(t''_2,t'_2)\cup T(t'_2,t_2).$$
The triangular tunnel bridge is depicted in Figure~\ref{fig:bridge}.

\begin{figure}
 \centering
 \includegraphics{./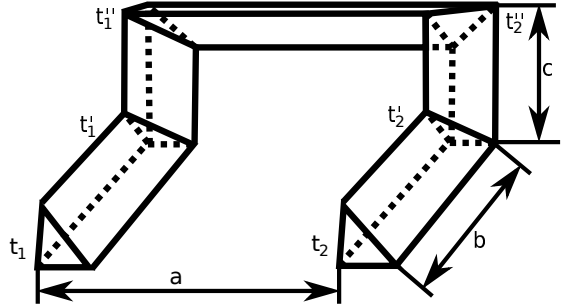}
 \caption{Triangular tunnel bridge}
 \label{fig:bridge}
\end{figure}

\begin{prop}
\label{prop:2bridges}
Let $t_1,t_2,t_3,t_4$ be disjoint triangles embedded into $\mathbb{R}^3$ such that the triangles belong to the hyperplane given by equation $x_3=0$ and one edge of each $t_1,t_2,t_3,t_4$ belongs to $x_1$ axis of $\mathbb{R}^3$ and $t_2,t_3,t_4$ are shifted copies of $t_1$ in the direction of the first coordinate of $\mathbb{R}^3$.
Let $l$ be the size of the longest edge of $t_1$. Let $a>l$ and $b>2a$.
Then the triangular tunnel bridges $TB(t_1,t_2,a,a)$ and $TB(t_3,t_4,b,a)$ are disjoint.
\end{prop}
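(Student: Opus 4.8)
The plan is to put everything in explicit coordinates and then separate the two bridges region by region, splitting each into its five constituent tunnels. Recall that $TB(t_1,t_2,a,a)$ consists of two \emph{floor tunnels} $T(t_1,t'_1)$ and $T(t'_2,t_2)$, two \emph{ramps} $T(t'_1,t''_1)$ and $T(t''_2,t'_2)$, and one \emph{deck} $T(t''_1,t''_2)$, and likewise for $TB(t_3,t_4,b,a)$. For a point of $\mathbb{R}^3$ write $y$ and $z$ for its second and third coordinates, so that the base triangles lie in the plane $z=0$. Let $p_i$ be the $x_1$-offset of $t_i$ and $w$ the common width of the base triangles along the $x_1$-axis, and note that $alt(t_i)=alt(t_1)\le l$, since the altitude of a triangle never exceeds its longest edge. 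The idea is that the elevated pieces (ramps and deck) of the two bridges sit in disjoint horizontal slabs in $y$, while the low pieces (floor tunnels) sit in disjoint vertical columns in $x_1$.

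First I would bound the first bridge in the $y$-direction. Because $t'_1$ arises from $t_1$ by the shift $(0,a,0)$ followed by moving only its top vertex by $(0,-l/2,0)$, the vertices of $t'_1$ have $y$-coordinate at most $a+alt(t_1)-l/2\le a+l/2$. Since $t''_1$, the ramp $T(t'_1,t''_1)$, and the deck $T(t''_1,t''_2)$ differ from $t'_1$ only by a shift in $z$, and the floor tunnel $T(t_1,t'_1)$ interpolates in $y$ between $t_1$ (where $y\le alt(t_1)\le l<a$) and $t'_1$, the whole bridge $TB(t_1,t_2,a,a)$ lies in the slab $\{y\le a+l/2\}$. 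By the same bookkeeping, the triangles $t'_3,t''_3,t'_4,t''_4$ all have $y\ge b+alt(t_1)-l/2\ge b-l/2$, so the three elevated tunnels of $TB(t_3,t_4,b,a)$ lie in $\{y\ge b-l/2\}$. As $b>2a$ and $a>l$, we get $b-l/2>2a-l/2>a+l/2$, hence the two slabs are disjoint. This already separates all of $TB(t_1,t_2,a,a)$ from the ramps and deck of $TB(t_3,t_4,b,a)$; in particular, the two decks, whose $x_1$-ranges overlap precisely in the interleaved configuration that the proposition is meant to handle, cannot meet because they live at $y\approx a$ and $y\approx b$.

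It then remains to rule out the two floor tunnels $T(t_3,t'_3)$ and $T(t'_4,t_4)$ of the second bridge. Their end triangles all lie in $\{z=0\}$, and $t'_i$ is a copy of $t_i$ shifted only in the $y$-direction, so each such floor tunnel lies entirely in $\{z=0\}$ and keeps its $x_1$-coordinates inside $[p_i,p_i+w]$ for $i\in\{3,4\}$. On the other hand, the part of $TB(t_1,t_2,a,a)$ meeting $\{z=0\}$ consists of its floor tunnels, its base triangles, and the $z=0$ boundary triangles of its ramps, all confined to $[p_1,p_1+w]\cup[p_2,p_2+w]$. Since $t_1,t_2,t_3,t_4$ are pairwise disjoint translates of one triangle along the $x_1$-axis, their base intervals $[p_i,p_i+w]$ are pairwise disjoint; hence the second bridge's floor tunnels meet the first bridge only inside an empty set of $x_1$-values. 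Combining the two separations gives $TB(t_1,t_2,a,a)\cap TB(t_3,t_4,b,a)=\emptyset$.

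The main obstacle is the arithmetic in the $y$-direction: the bound $y\le a+l/2$ for the first bridge is sharp enough to beat $b-l/2$ only because $alt(t_1)\le l$ and $a>l$, so the $-l/2$ vertex shifts built into the bridge must be tracked rather than discarded, and the hypothesis $b>2a$ (rather than merely $b>a$) provides exactly the needed margin. Once the slab $\{y\le a+l/2\}$ is in hand, the conceptually interesting crossing case is dispatched purely by the $y$-separation of the decks, and the only remaining low-lying pieces are separated by the disjointness of the base triangles along the $x_1$-axis.
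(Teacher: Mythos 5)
Your proof is correct, and it is in essence a rigorous version of the paper's own proof, which consists entirely of the sentence ``The proposition follows from Figure~\ref{fig:2bridges}'': the figure depicts exactly the separation you formalize, namely the low bridge confined near the $x_1$-axis while the high bridge clears it in $y$, with the only low-lying pieces of the high bridge running in $x_1$-columns over its own (disjoint) base triangles. One cosmetic remark: your intermediate vertex bounds ``at most $a+alt(t_1)-l/2$'' for $t'_1$ and ``at least $b+alt(t_1)-l/2$'' for $t'_3,t''_3,t'_4,t''_4$ are not literally correct (the base edges of $t'_1$ and $t'_3$ sit at $y=a$ and $y=b$, so one of the two claims fails according to whether $alt(t_1)<l/2$ or $alt(t_1)\ge l/2$), but the relaxed bounds $y\le a+l/2$ and $y\ge b-l/2$ that your argument actually uses hold in every case, so the proof stands.
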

\begin{proof}
The proposition follows from Figure~\ref{fig:2bridges}.
\begin{figure}
 \centering
 \includegraphics{./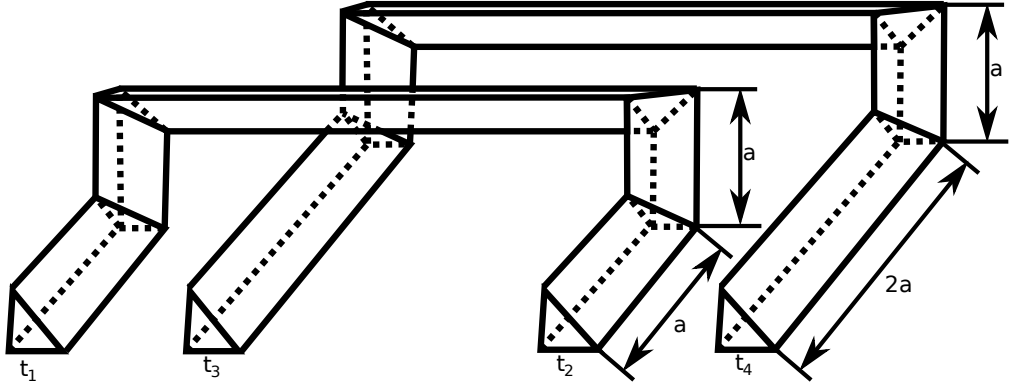}
 \caption{The proof of Proposition~\ref{prop:2bridges}.}
 \label{fig:2bridges}
\end{figure}

\end{proof}

\subsubsection{Construction}
Let $\mathcal{C}$ be a binary code with a 2-basis $B=\{b_1,\dots,b_d\}$. We construct the following triangular configuration $\Delta^\mathcal{C}_B$ and embed it into $\mathbb{R}^3$.

In the first step, we put $d$ identical copies of $S^n$, denoted by $S^n_1,\dots,S^n_d$; into $\mathbb{R}^3$ as is depicted in Figure~\ref{fig:spheres}.
\begin{figure}
 \centering
 \includegraphics{./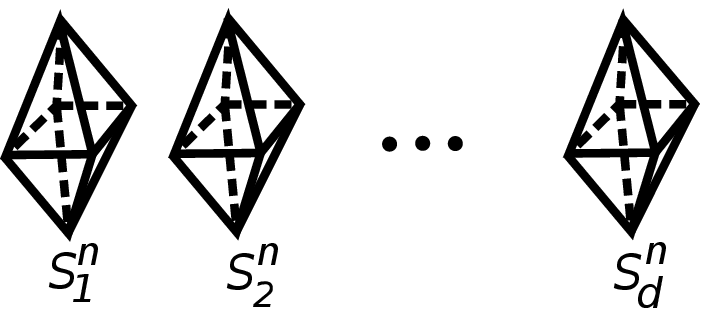}
 \caption{Spheres}
 \label{fig:spheres}
\end{figure}
Formally:
Let $v^1_1$ equals $(0,0,0)$ and $v^1_2$ equals $(2,0,0)$ and $v^1_3$ equals $(1,0,1)$.
The points $v^1_1,v^1_2,v^1_3$ are vertices of the first copy of $S^n$. Thus $S^n_1$ equals $S^n(v^1_1,v^1_2,v^1_3)$. Therefore, the size of every edge of every triangle of $S^n_1$ is less or equal $2$. 
The triangular configuration $S^n_i$ is shifted by offset $5i$ from the origin.
Let $v^i_1$ equals $(0+5i,0,0)$ and $v^i_2$ equals $(2+5i,0,0)$ and $v^i_3$ equals $(1+5i,0,1)$.
Then $S^n_i$ equals $S^n(v^i_1,v^i_2,v^i_3)$, for $i=1,\dots,d$.
Then $$\Delta^\mathcal{C}_B:=S_1^n\cup\dots\cup S^d_n.$$
In the second step, we add to $\Delta^\mathcal{C}_B$ the tunnels.
We also construct set of triangles $\{B_1^n,\dots,B_n^n\}$ and triangular configurations $\Delta_{b_i}, i=1,\dots,d$. Initially we set $\Delta_{b_i}:=S^n_i$ for $i=1,\dots,d$.
We interconnect the triangular configurations $S^n_i$, for $i=1,\dots,d$, by tunnels in the following way:

We proceed from the first coordinate $1$ to the last coordinate $n$.
\begin{itemize}

\item If the coordinate $i$ is zero in all basis vectors, we add an isolated triangle to $\Delta^\mathcal{C}_B$ and denote it by $B^n_i$.

\item If the coordinate $i$ is non-zero only in one basis vector $b_k$, we denote the triangle $S^n_k(i)$ by $B^n_i$ and we do nothing otherwise.

\item If the coordinate $i$ is non-zero in two basis vectors $b_k$ and $b_l$, $k<l$, we add triangular tunnel bridge $TB(S^n_k(i),S^n_l(i),5i,5)$ to $\Delta^\mathcal{C}_B$.
We also add this tunnel bridge to the set $\Delta_{b_l}$.
We remove the triangle $S^n_l(i)$ from $\Delta^\mathcal{C}_B$ and $\Delta_{b_l}$ and we denote the triangle $S^n_k(i)$ by $B^n_i$.

\end{itemize}

We denote the set of triangles $\{B^n_1,\dots,B^n_n\}$ by $B^n$.

\begin{prop}
The triangular tunnel bridges added in the last step are mutually disjoint.
\end{prop}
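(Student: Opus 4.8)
The plan is to reduce the statement to the pairwise case and then separate any two bridges by a case analysis on height. Since each coordinate contributes at most one bridge, any two of the bridges added in the last step are $TB(S^n_{k}(i),S^n_{l}(i),5i,5)$ and $TB(S^n_{k'}(j),S^n_{l'}(j),5j,5)$ for two distinct coordinates $i\neq j$; I assume without loss of generality that $i<j$, so the two archways reach the distinct peak depths $5i$ and $5j$ (along the direction transverse to the line of spheres in which the tunnel arches), differing by at least $5$, while both rise to the common height $5$. Recall from the definition that each such bridge is the union of five tunnels: two risers running out from the base triangles to the peak depth at ground height, two vertical tunnels standing at the peak depth and climbing from height $0$ to height $5$, and one top tunnel joining the two legs at height $5$. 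The first step is to record the coarse location of each piece in the three directions: the risers sweep the depth interval $[0,5i]$ while staying near height $0$; the verticals sit at depth $5i$ and occupy heights in $[0,5]$; and the top sits at depth $5i$ and height $5$.

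Suppose for contradiction that the two bridges share a point $p$, and split according to the height $h$ of $p$. In the \emph{high} case, where $h$ exceeds a fixed threshold (say $3$), the only material reaching such heights is the upper portion of the verticals and the tops, and for each bridge this material lies within distance at most the triangle size ($\le 2$) of that bridge's peak depth. Thus the high material of the first bridge lies at depth in $[5i-2,5i+2]$ and that of the second in $[5j-2,5j+2]$; since the peak depths differ by at least $5>4$, these depth ranges are disjoint, and $p$ cannot lie on both bridges. This is precisely where the choice of height $5$, large compared with the triangle size, is used, together with the fact that consecutive depths differ by $5$.

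The main case, and the delicate one, is the \emph{low} case $h\le 3$: here both bridges have risers near height $0$ that overlap in the depth range $[0,5i]$, so neither depth nor height separates them, and the separation must come from the direction along the line of spheres (the first coordinate). The key point is that every tunnel below the threshold (the risers and the low parts of the verticals) is obtained from its base triangle by translation in the depth and height directions, and hence has the same projection onto the first coordinate as that base triangle. So all low material of the first bridge projects into the first-coordinate footprints of $S^n_{k}(i)$ and $S^n_{l}(i)$, and that of the second bridge into the footprints of $S^n_{k'}(j)$ and $S^n_{l'}(j)$. Since $i\neq j$, these four are distinct designated triangles, and by the placement of the spheres $5$ apart together with the placement of the triangles $S^n(1),\dots,S^n(n)$ in pairwise disjoint slabs of the first coordinate (Figure~\ref{fig:extspheresub}), their footprints are disjoint; hence $p$ cannot lie on both bridges here either.

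The hard part will be this last step, namely verifying rigorously that the designated base triangles, and therefore the risers built over them, occupy disjoint intervals of the first coordinate. When the two bridges sit on different spheres this is immediate from the spacing $5$, but when they share a sphere the separation rests entirely on the subdivision of a single triangle into $S^n(1),\dots,S^n(n)$, so the argument must invoke the explicit geometry of that subdivision. Proposition~\ref{prop:2bridges} already proves disjointness for a representative pair of bridges built on four triangles spaced along the first coordinate; the remaining work is the coordinate bookkeeping above, which reduces every pair either to the disjoint depths $5i\neq 5j$, to the height gap below $5$, or to these disjoint first-coordinate footprints.
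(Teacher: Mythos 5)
Your argument is correct and takes a genuinely different route from the paper's. The paper disposes of a pair of bridges $TB(S^n_{k_1}(i_1),S^n_{l_1}(i_1),5i_1,5)$ and $TB(S^n_{k_2}(i_2),S^n_{l_2}(i_2),5i_2,5)$ by checking the hypotheses of Proposition~\ref{prop:2bridges} (the four base triangles are disjoint shifted copies along the first axis, edges have length at most $2$, and the depth parameters satisfy the required inequalities) and then citing that proposition, whose own proof is simply Figure~\ref{fig:2bridges}; you instead carry out the separation directly, by cases on the height of a hypothetical common point: high material (tops and upper verticals) is depth-localized near $5i$ and $5j$, which differ by at least $5$, while low material (risers and lower verticals) is localized in the first coordinate over the footprints of its base triangles. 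Your route buys two things. It is numerically sound where the paper's reduction is shaky: the paper infers the hypothesis $b>2a$ of Proposition~\ref{prop:2bridges} from $i_1<i_2$ via the claim $2(5i_1)\leq 5i_2$, which fails already for $i_1=2$, $i_2=3$, and the constructed bridges, with parameter pairs $(5i_1,5)$ and $(5i_2,5)$, fit the pattern $TB(t_1,t_2,a,a)$, $TB(t_3,t_4,b,a)$ of that proposition only when $i_1=1$; your bookkeeping needs only the gap of $5$ between consecutive depths (larger than twice the triangle size), so it covers all pairs uniformly. Conversely, the paper's route is shorter because it black-boxes all geometry into one cited proposition. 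Finally, the step you flag as the hard part---that $S^n(1),\dots,S^n(n)$ occupy pairwise disjoint first-coordinate slabs inside one sphere, so risers attached to distinct designated triangles cannot meet---is exactly the figure-dependent fact the paper consumes silently when it asserts that $S^n_{k_1}(i_1),S^n_{l_1}(i_1),S^n_{k_2}(i_2),S^n_{l_2}(i_2)$ are disjoint translates along the first axis; deferring to Figure~\ref{fig:extspheresub} there leaves you at the same level of rigor as the paper, with the dependence made explicit rather than implicit.
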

\begin{proof}
Let $TB(S^n_{k_1}(i_1),S^n_{l_1}(i_1),5i_1,5)$ and $TB(S^n_{k_2}(i_2),S^n_{l_2}(i_2),5i_2,5)$ be two triangular tunnel bridges from the last step. If there is none or only one, the proposition follows.
Since $i_1\neq i_2$ and $k_1\neq l_1$ and $k_2\neq l_2$,
the triangles $S^n_{k_1}(i_1),S^n_{l_1}(i_1),S^n_{k_2}(i_2),S^n_{l_2}(i_2)$ are disjoint. The size of each edge of the triangles is at most $2$.
Since $i_1\geq 1, i_2\geq 1$, it holds $5i_1>2$ and $5i_2>2$. We can suppose that $i_1<i_2$. Therefore $2(5i_1)\leq 5i_2$. Now, we can use Proposition~\ref{prop:2bridges} and the proposition follows.
\end{proof}

\begin{cor}
Triangular configuration $\Delta^\mathcal{C}_B$ can be embedded into $\mathbb{R}^3$.\qed
\end{cor}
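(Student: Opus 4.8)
The plan is to realize $\Delta^\mathcal{C}_B$ as the union of its building blocks and to check that any two blocks meet only in a common face, which is exactly the condition required for a simplicial complex embedded in $\mathbb{R}^3$. The blocks are of three kinds: the spheres $S^n_1,\dots,S^n_d$ (with certain triangles deleted in the second step), the triangular tunnel bridges added for each coordinate that is non-zero in two basis vectors, and the isolated triangles $B^n_i$ added for coordinates vanishing in every basis vector. Each block is individually embeddable: the spheres by the proposition on $S^n$, and each bridge as a union of the six-triangle tunnels $T(\cdot,\cdot)$, whose embeddability is immediate from the explicit coordinates of the bridge construction.

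First I would verify that the spheres are pairwise disjoint. Since $S^n_i=S^n(v^i_1,v^i_2,v^i_3)$ is the copy of $S^n_1$ shifted by $(5i,0,0)$ and every edge of $S^n_1$ has length at most $2$, the sphere $S^n_i$ occupies a bounded region confined to first coordinates near $5i$; as consecutive copies are spaced $5$ apart and this spacing exceeds the extent of a single sphere, the spheres $S^n_1,\dots,S^n_d$ are pairwise disjoint. The isolated triangles $B^n_i$ may likewise be placed away from every other block, so they create no intersections.

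Next I would combine the remaining facts. The preceding proposition already shows that the tunnel bridges added in the last step are mutually disjoint, so no two bridges interfere. It then remains to treat the cross terms between bridges and spheres: a bridge $TB(S^n_k(i),S^n_l(i),5i,5)$ must meet the spheres $S^n_k$ and $S^n_l$ only in the attaching triangles $S^n_k(i)$ and $S^n_l(i)$, and be disjoint from every other sphere. This is where the geometry of the bridge is used: away from its two end triangles the bridge leaves the plane $x_3=0$ and rises in the positive $x_2$-direction to the height parameter $5i$, which exceeds the bounded extent of every sphere; thus its arcing part passes above all spheres, while near $x_2=0$ it sits only at the first-coordinate positions of spheres $k$ and $l$, where it meets them exactly in the attaching triangles.

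The main obstacle is precisely this last verification — ruling out spurious intersections between a bridge and the spheres it is not attached to, and confirming that it meets its two endpoint spheres only along the shared triangles. Once this separation is made precise using the explicit sphere coordinates and the bridge parameters $5i$ and $5$, every pair of blocks meets only in common faces, and the union $\Delta^\mathcal{C}_B$ is a simplicial complex embedded in $\mathbb{R}^3$, as claimed.
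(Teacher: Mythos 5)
Your proposal is correct and takes essentially the same route the paper intends: the paper states this corollary with a bare \qed, relying on the preceding proposition (mutual disjointness of the tunnel bridges) together with the explicit coordinates of the construction, from which the disjointness of the spheres $S^n_1,\dots,S^n_d$ and the fact that each bridge meets its two spheres only in the attaching triangles $S^n_k(i)$, $S^n_l(i)$ are taken as evident. Your write-up simply makes those implicit verifications explicit --- including the bridge--sphere cross terms, which the paper never addresses in writing --- so it is, if anything, more careful than the paper's own treatment.
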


An example of construction is depicted in Figure~\ref{fig:exampleconstr}.
\begin{figure}
 \centering
 \includegraphics{./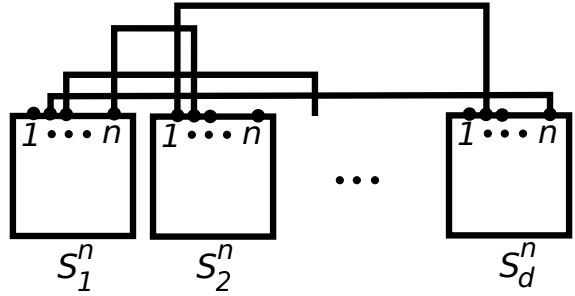}
 \caption{Top view on an example of the construction, the triangular tunnel bridges are depicted by lines connected to dots denoted by $1,\dots,n$.}
 \label{fig:exampleconstr}
\end{figure}
To finish proof of Theorem~\ref{thm:repr3d}, it remains to show that $\Delta^\mathcal{C}_B$ is geometric representation of $\mathcal{C}$.
We prove that $\Delta^\mathcal{C}_B$ is indeed geometric representation of $\mathcal{C}$ in Subsection~\ref{sec:represents}.

\subsubsection{Proof of representability}
\label{sec:represents}
We follow strategy described in Rytíř~\cite{rytir2} with the building blocks constructed in previous section. Before we state the proofs we introduce some definitions. In this section all operations are over the field $GF(2)$.

Let $\mathcal{C}$ be a binary linear code and let $B=\{b_1,\dots,b_d\}$ be a basis of $\mathcal{C}$.
Let $\Delta^\mathcal{C}_B$ be the geometric representation of $\mathcal{C}$ with respect to the basis $B$ from Section~\ref{sec:repr3d} or Section~\ref{sec:repr4d}.
We suppose that $\Delta^\mathcal{C}_B$ exists.
Let $c$ be a codeword from $\mathcal{C}$.
Then $c=\sum_{i\in I}b_i$.
The \dfn{degree} of $c$ with respect to the basis $B$ is defined to be the cardinality $\left\rvert I\right\lvert$ of the index set.
The degree is denoted by $\vd{c}$.
 Let $\Delta^\mathcal{C}_{b_i}$, $i=1,\dots,d$ be triangular configurations defined also in Section~\ref{sec:repr3d}.
We define a linear mapping $f\colon\mathcal{C}\mapsto \ker\Delta^\mathcal{C}_B$ in the following way:
Let $c$ be a codeword of $\mathcal{C}$ and let $c=\sum_{i\in I}b_i$ be the unique expression of $c$, where $b_i\in B$.
We define $f(c):=\sum_{i\in I}\chi(\Delta^\mathcal{C}_{b_i})$.
The entries of $f(c)$ are indexed by the triangles of $\Delta^\mathcal{C}_B$.
We have $f(c)^{B^n_j}=1$ if and only if $\bigtriangleup_{i\in I} T(\Delta^\mathcal{C}_{b_i})$ contains the triangle $B^n_j$.

\begin{prop}
\label{prop:mappingf}
Let $m$ be the number of triangles of $\Delta^\mathcal{C}_B$.
Let $c=(c^1,\dots,c^n)$ and
\begin{equation*}
f(c)=\left(f(c)^{B^n_1},\dots,f(c)^{B^n_n},f(c)^{n+1},\dots,f(c)^{m}\right). 
\end{equation*}
Then $f(c)^{B^n_j}=c^j$ for all $j=1,\dots,n$ and all $c\in\mathcal{C}$.
\end{prop}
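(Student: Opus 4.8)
The plan is to unwind the definitions and track what the map $f$ does on each of the first $n$ coordinates, which are exactly the coordinates indexed by the distinguished triangles $B^n_1,\dots,B^n_n$. Since $f$ is linear over $GF(2)$ and both $c\mapsto c^j$ and $c\mapsto f(c)^{B^n_j}$ are linear functionals in $c$, it suffices to verify the identity $f(c)^{B^n_j}=c^j$ on the basis vectors $b_k$, i.e.\ to show $f(b_k)^{B^n_j}=b_k^j$ for every $k\in\{1,\dots,d\}$ and every $j\in\{1,\dots,n\}$. For a basis vector we have $f(b_k)=\chi(\Delta^\mathcal{C}_{b_k})$, so the claim reduces to the purely combinatorial statement that the characteristic vector of the subconfiguration $\Delta^\mathcal{C}_{b_k}$ contains the triangle $B^n_j$ precisely when $b_k^j=1$.

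First I would fix the basis vector $b_k$ and recall how $\Delta^\mathcal{C}_{b_k}$ and the triangles $B^n_j$ were built in the construction. Each $\Delta^\mathcal{C}_{b_k}$ starts as the sphere copy $S^n_k$ and is then modified as we sweep the coordinates $j=1,\dots,n$: a tunnel bridge is adjoined to $\Delta^\mathcal{C}_{b_l}$ (and the triangle $S^n_l(j)$ removed from it) exactly when coordinate $j$ is nonzero in the two basis vectors $b_k,b_l$ with $k<l$. I would then examine the three cases of the construction separately. When coordinate $j$ is zero in every basis vector, $B^n_j$ is a newly added isolated triangle belonging to no $\Delta^\mathcal{C}_{b_k}$, and indeed $b_k^j=0$ for all $k$, so $f(b_k)^{B^n_j}=0$ matches. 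When $j$ is nonzero in exactly one basis vector $b_k$, the triangle $S^n_k(j)$ is renamed $B^n_j$ and lies in $\Delta^\mathcal{C}_{b_k}$ but in no other $\Delta^\mathcal{C}_{b_l}$, matching $b_k^j=1$ and $b_l^j=0$ for $l\neq k$. When $j$ is nonzero in exactly two basis vectors $b_k,b_l$ with $k<l$, the triangle $B^n_j$ is $S^n_k(j)$: it remains in $\Delta^\mathcal{C}_{b_k}$, while for $\Delta^\mathcal{C}_{b_l}$ the bridge $TB(S^n_k(j),S^n_l(j),5j,5)$ is inserted and $S^n_l(j)$ removed — so $B^n_j$ sits in $\Delta^\mathcal{C}_{b_l}$ if and only if the bridge contains it, which by the definition of the tunnel bridge it does exactly at the shared triangle. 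The bookkeeping here must confirm that $B^n_j$ belongs to $\Delta^\mathcal{C}_{b_l}$ as well, so that $f(b_k)^{B^n_j}=b_k^j$ holds for both indices $k$ and $l$.

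The main obstacle I anticipate is the third case, and in particular pinning down precisely which triangle of the glued object $\Delta^\mathcal{C}_{b_l}$ is labeled $B^n_j$ and whether the tunnel bridge contributes that triangle to the characteristic vector. Because the $2$-basis condition guarantees each coordinate $j$ is nonzero in at most two basis vectors, these three cases are exhaustive, and the only subtlety is that adding a tunnel bridge to $\Delta^\mathcal{C}_{b_l}$ must leave the designated triangle $B^n_j$ present in $\Delta^\mathcal{C}_{b_l}$ exactly when $b_l^j=1$. I would therefore spend most of the care verifying that the membership of $B^n_j$ in $\Delta^\mathcal{C}_{b_l}$ correctly tracks the removal of $S^n_l(j)$ and the insertion of the bridge; once the three cases are checked, linearity of $f$ over $GF(2)$ extends the equality from basis vectors to all $c\in\mathcal{C}$ and completes the proof.
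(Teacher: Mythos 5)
Your proposal is correct and takes essentially the same approach as the paper: the paper's induction on the degree of $c$ is precisely your linearity reduction to basis vectors, and the basis-vector identity $\chi(\Delta^\mathcal{C}_{b_k})^{B^n_j}=b_k^j$ that you verify by the three-case analysis of the construction is the same fact the paper simply reads off ``from the definition of $\Delta^\mathcal{C}_{b_k}$''. Your treatment of the third case (checking that $B^n_j=S^n_k(j)$ remains in $\Delta^\mathcal{C}_{b_k}$ and also lies in $\Delta^\mathcal{C}_{b_l}$ after the bridge is attached and $S^n_l(j)$ removed) is just a more explicit spelling-out of what the paper leaves implicit.
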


\begin{proof}

We show the proposition by the induction on the degree $\vd{c}$ of $c$.
The codeword $c$ is equal to $\sum_{i\in I}b_i$.
If $\vd{c}=0$, then $c=0$ and $f(c)=0$. Thus, $f(c)$ is the characteristics vector of the empty triangular configuration. The proposition holds for vectors of degree $0$.
Suppose that $\vd{c}$ is greater than $0$, then $\lvert I\rvert\geq 1$. We choose some $k$ from $I$. The codeword $c+b_k$ has a degree less than $c$. By the induction assumption, the proposition holds for $c+b_k$.
Let $b_k=(b_k^1,\dots,b_k^n)$.
From the definition of $\Delta^\mathcal{C}_{b_k}$, the equality $b_k^j=\chi(\Delta^\mathcal{C}_{b_k})^{B^n_j}$ holds for all $j=1,\dots,n$.
Therefore,
\begin{equation*}
c^j=(c^j+b_k^j)+b_k^j=\chi(\bigtriangleup_{i\in I\setminus\{k\}}\Delta^\mathcal{C}_{b_i})^{B^n_j}+\chi(\Delta^\mathcal{C}_{b_k})^{B^n_j}=f(c)^{B^n_j}
\end{equation*}
for all $j=1,\dots,n$.
\end{proof}
\begin{cor}
\label{cor:inj}
The mapping $f$ is injective.\qed
\end{cor}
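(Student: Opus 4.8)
The plan is to derive injectivity directly from Proposition~\ref{prop:mappingf}. Since $f$ is a linear mapping between $GF(2)$-vector spaces, it suffices to show that its kernel is trivial, that is, that $f(c)=0$ forces $c=0$. Equivalently, I will exhibit an explicit left inverse of $f$, which immediately yields injectivity.

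First I would observe that Proposition~\ref{prop:mappingf} already identifies such a left inverse, namely the coordinate projection $\pi$ that reads off the entries indexed by $B^n_1,\dots,B^n_n$. Indeed, for every codeword $c=(c^1,\dots,c^n)$ the proposition gives $\pi(f(c))=(f(c)^{B^n_1},\dots,f(c)^{B^n_n})=(c^1,\dots,c^n)=c$, so $\pi\circ f$ is the identity on $\mathcal{C}$. Then, for any $c,c'\in\mathcal{C}$ with $f(c)=f(c')$, applying $\pi$ yields $c^j=f(c)^{B^n_j}=f(c')^{B^n_j}=(c')^j$ for all $j=1,\dots,n$, whence $c=c'$; in particular $f(c)=0$ implies $c=\pi(f(c))=\pi(0)=0$.

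There is no real obstacle here: the entire content is already carried by Proposition~\ref{prop:mappingf}, which guarantees that the restriction of $f$ to the first $n$ coordinates recovers the original codeword. The corollary is then just the elementary fact that a linear map admitting a left inverse is injective, so the proof reduces to invoking the preceding proposition and reading off the first $n$ coordinates.
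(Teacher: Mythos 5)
Your proof is correct and matches the paper's intended argument: the corollary carries only a \qed because it follows immediately from Proposition~\ref{prop:mappingf}, exactly as you observe --- the projection onto the coordinates indexed by $B^n_1,\dots,B^n_n$ is a left inverse of $f$, so $f(c)=f(c')$ forces $c=c'$. Nothing further is needed.
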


\begin{lem}
\label{lem:cycletriangles}
Let $E$ be a non-empty even subset of $\Delta^\mathcal{C}_B$.
Then $\mathcal{K}(E)$ contains $\Delta^\mathcal{C}_{b_i}\setminus T(B^n)$ ($\Delta^\mathcal{C}_{b_i}$ with triangles of $B^n$ removed) as a subconfiguration for some $i\in \left\{1,\dots,d\right\}$.
\end{lem}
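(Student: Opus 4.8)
The plan is to argue by a parity-propagation (\emph{edge forcing}) argument driven by the even-subset condition. The starting observation is that if a geometric edge $e$ of $\Delta^\mathcal{C}_B$ is incident with exactly two triangles $t,t'$, then any even subset $E$ contains an even number of the triangles through $e$, and hence $t\in E$ if and only if $t'\in E$. Calling two triangles \emph{adjacent} when they share an edge of degree two in $\Delta^\mathcal{C}_B$, this says that $E$ is closed under adjacency, i.e.\ a union of connected components of the resulting adjacency graph. First I would catalogue the edge degrees produced by the construction: inside each sphere $S^n_i$ and inside each triangular tunnel the shared edges have degree two; the boundary edges of a hole left by deleting a subdivision triangle $S^n_l(c)$ also have degree two, since the deleted triangle no longer contributes; each of the three edges of a representative triangle $B^n_c=S^n_k(c)$ that carries a tunnel has degree three, being incident with $B^n_c$, its neighbour in $S^n_k$, and the adjoining tunnel triangle; and the three edges of an isolated triangle (arising from an all-zero coordinate) have degree one.

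Next I would show that a non-empty $E$ must meet $\Delta^\mathcal{C}_{b_i}\setminus T(B^n)$ for some $i$. An isolated triangle has all edges of degree one and so lies in no even subset. By construction the labelled subdivision triangles $S^n_i(1),\dots,S^n_i(n)$ are pairwise non-adjacent, so every sphere-neighbour of a representative triangle is itself non-representative; consequently, if some representative lies in $E$, the parity condition at its incident edges forces a non-representative triangle (a sphere-neighbour, or an adjoining tunnel triangle) into $E$ as well. Thus in all cases $E$ contains a non-representative triangle. Since the only triangles shared between the configurations $\Delta^\mathcal{C}_{b_1},\dots,\Delta^\mathcal{C}_{b_d}$ are the representatives $B^n_c$, every triangle outside $T(B^n)$ belongs to a unique $\Delta^\mathcal{C}_{b_i}$, so this non-representative triangle lies in $\Delta^\mathcal{C}_{b_i}\setminus T(B^n)$ for some $i$.

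I would then propagate from such a triangle $t_0\in E\cap(\Delta^\mathcal{C}_{b_i}\setminus T(B^n))$ through the degree-two adjacency. The edges internal to $S^n_i$, the edges internal to the tunnel bridges, and the hole boundaries all have degree two and are therefore crossed, carrying membership in $E$ along them, whereas the degree-three edges around the tunnel-bearing representatives are not crossed. Moreover no degree-two edge leaves $\Delta^\mathcal{C}_{b_i}$, because distinct configurations meet only along those degree-three edges. Hence the forcing stays inside $\Delta^\mathcal{C}_{b_i}$ and drives the entire degree-two connected component of $t_0$ into $E$.

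The main obstacle is to see that this component exhausts $\Delta^\mathcal{C}_{b_i}\setminus T(B^n)$, which is precisely where the geometry of the subdivision $S^n$ (Figure~\ref{fig:extspheresub}) and of the tunnel bridges enters. I would verify, using the pairwise non-adjacency of the labelled triangles, that each tunnel-bearing representative sits as an \emph{isolated} hole whose removal cannot disconnect the remaining triangulation; then the seven un-subdivided faces of $S^n_i$, the surviving subdivision triangles, the hole boundaries, and the tunnel triangles are all joined through degree-two edges into a single component containing $\Delta^\mathcal{C}_{b_i}\setminus T(B^n)$. Granting this connectivity, the propagation forces every triangle of $\Delta^\mathcal{C}_{b_i}\setminus T(B^n)$ into $E$, so $\mathcal{K}(E)$ contains $\Delta^\mathcal{C}_{b_i}\setminus T(B^n)$ as a subconfiguration, which is the assertion of the lemma.
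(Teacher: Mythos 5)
Your proposal is correct and takes essentially the same route as the paper's own (much terser) proof: the paper likewise observes that every edge of $\Delta^\mathcal{C}_{b_i}$ incident with no triangle of $B^n$ has degree two, so $\mathcal{K}(E)$ contains all or none of the triangles of $\Delta^\mathcal{C}_{b_i}\setminus T(B^n)$, and that $B^n$, being a set of pairwise disjoint triangles, carries no non-empty cycle, so a non-empty even subset must contain a triangle of some $\Delta^\mathcal{C}_{b_i}\setminus T(B^n)$. The connectivity verification you single out as the ``main obstacle'' is precisely the step the paper leaves implicit in its all-or-nothing claim, so your write-up is, if anything, more complete than the original.
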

\begin{proof}
Triangular configuration $\mathcal{K}(E)$ contains either all triangles or no triangle of $\Delta^\mathcal{C}_{b_i}\setminus T(B^n)$, since all edges of $\Delta^\mathcal{C}_{b_i}$ incident with no triangle of $B^n$ have degree equals $2$, for $i=1,\dots,d$.
The triangular configuration $B^n$ have no non-empty cycle, since the triangles of $B^n$ are disjoint.
Hence, $\mathcal{K}(E)$ contains a triangle of $\Delta^\mathcal{C}_{b_i}\setminus T(B^n)$ for some $i\in \left\{1,\dots,d\right\}$. Thus, $\mathcal{K}(E)$ contains $\Delta^\mathcal{C}_{b_i}\setminus T(B^n)$ for some $i\in \left\{1,\dots,d\right\}$. 
\end{proof}

\begin{thm}
\label{thm:trianrep}
The mapping $f$ defined above is a bijection between the binary linear code
$\mathcal{C}$ and $\ker\Delta^\mathcal{C}_B$.
\end{thm}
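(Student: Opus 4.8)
The plan is to leverage what is already in place: Corollary~\ref{cor:inj} gives injectivity of $f$, and $f$ is linear, so the whole theorem reduces to proving that $f$ is \emph{onto} $\ker\Delta^\mathcal{C}_B$. That $f$ lands in $\ker\Delta^\mathcal{C}_B$ is immediate from the construction, since each $\Delta^\mathcal{C}_{b_i}$ is a closed triangulated surface (all of its edges have even degree, as already noted in the proof of Lemma~\ref{lem:cycletriangles}), hence an even subset, so that $\chi(\Delta^\mathcal{C}_{b_i})\in\ker\Delta^\mathcal{C}_B$ and $f(c)$ is a $GF(2)$-sum of such vectors. Recalling that the elements of $\ker\Delta^\mathcal{C}_B$ are exactly the characteristic vectors $\chi(E)$ of even subsets $E$, surjectivity amounts to the combinatorial claim that every even subset $E$ of $\Delta^\mathcal{C}_B$ can be written as $E=\bigtriangleup_{i\in I}T(\Delta^\mathcal{C}_{b_i})$ for some $I\subseteq\{1,\dots,d\}$; for then $\chi(E)=\sum_{i\in I}\chi(\Delta^\mathcal{C}_{b_i})=f(\sum_{i\in I}b_i)$ with $\sum_{i\in I}b_i\in\mathcal{C}$.

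I would prove this claim by induction, but not on $\lvert E\rvert$ (which need not decrease). Instead the measure I would use is $\mu(E)$, the number of indices $i$ for which $E$ contains a non-base triangle of $\Delta^\mathcal{C}_{b_i}$, i.e.\ a triangle of $\Delta^\mathcal{C}_{b_i}\setminus T(B^n)$. The argument inside Lemma~\ref{lem:cycletriangles} already shows that for each $i$ the even subset $E$ contains either all or none of the triangles of $\Delta^\mathcal{C}_{b_i}\setminus T(B^n)$, so $\mu(E)$ counts the surfaces whose non-base part is fully present in $E$. The key structural fact I would record first is that non-base triangles belonging to distinct $\Delta^\mathcal{C}_{b_i}$ are disjoint: each tunnel-bridge triangle lies in a single $\Delta_{b_l}$, each sphere $S^n_i$ serves a single configuration, and the only triangles shared between two configurations are the base triangles $B^n_j$ (shared precisely when coordinate $j$ is non-zero in two basis vectors, which the $2$-basis permits for at most two of them).

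For the induction, the base case $\mu(E)=0$ forces $E\subseteq T(B^n)$; since the triangles of $B^n$ are pairwise disjoint they support no non-empty even subset, so $E=\emptyset=\bigtriangleup_{i\in\emptyset}T(\Delta^\mathcal{C}_{b_i})$. If $\mu(E)\geq 1$, then $E\neq\emptyset$ and Lemma~\ref{lem:cycletriangles} supplies an index $i$ with $\Delta^\mathcal{C}_{b_i}\setminus T(B^n)\subseteq\mathcal{K}(E)$. I would set $E':=E\bigtriangleup T(\Delta^\mathcal{C}_{b_i})$, which is again even. Because $T(\Delta^\mathcal{C}_{b_i})$ meets the non-base triangles of surface $i$ exactly and meets no non-base triangle of any other surface (only base triangles besides), passing from $E$ to $E'$ deletes all non-base triangles of surface $i$ (they all lay in $E$) and leaves the non-base content of every other surface untouched; hence $\mu(E')=\mu(E)-1$. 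By induction $E'=\bigtriangleup_{j\in I'}T(\Delta^\mathcal{C}_{b_j})$, so $E=E'\bigtriangleup T(\Delta^\mathcal{C}_{b_i})=\bigtriangleup_{j\in I'\bigtriangleup\{i\}}T(\Delta^\mathcal{C}_{b_j})$, completing the induction and hence the proof of surjectivity and of the theorem.

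The main obstacle, which the choice of measure is designed to circumvent, is that the naive induction on the number of triangles fails: subtracting $T(\Delta^\mathcal{C}_{b_i})$ removes the non-base triangles of surface $i$ but \emph{toggles} the shared base triangles $B^n_j$, so $\lvert E\rvert$ can increase. The crucial observation making $\mu$ a legitimate monovariant is that base triangles never count toward $\mu$ and that the non-base triangles are partitioned among the surfaces; establishing this disjointness cleanly from the construction (and confirming that each $\Delta^\mathcal{C}_{b_i}$ is genuinely a closed surface, so that its characteristic vector indeed lies in the kernel) is the one step requiring care.
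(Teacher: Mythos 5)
Your proof is correct, and its skeleton is the same as the paper's: injectivity is quoted from Corollary~\ref{cor:inj}, and the remaining (surjectivity/dimension) half is a peeling argument in which Lemma~\ref{lem:cycletriangles} supplies an index $i$ with $\Delta^\mathcal{C}_{b_i}\setminus T(B^n)\subseteq\mathcal{K}(E)$ and one replaces $E$ by $E\bigtriangleup T(\Delta^\mathcal{C}_{b_i})$. The genuine difference is the termination measure. The paper takes a \emph{minimal-weight} element of $\ker\Delta^\mathcal{C}_B$ outside the span of $\{f(b_1),\dots,f(b_d)\}$ and derives a contradiction from the strict inequality $\lvert E\bigtriangleup T(\Delta^\mathcal{C}_{b_i})\rvert<\lvert E\rvert$, which rests on the cardinality fact $\lvert T(\Delta^\mathcal{C}_{b_i}\setminus T(B^n))\rvert>\lvert T(B^n)\rvert$ built into the construction (each sphere with its bridges has more than $n$ non-base triangles). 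Your measure $\mu$, the number of surfaces whose non-base part lies in $E$, avoids counting altogether: it drops by exactly one per step because the non-base triangles of distinct $\Delta^\mathcal{C}_{b_i}$ are pairwise disjoint. That buys robustness — your argument would survive a variant of the construction with small spheres — at the price of having to verify the disjointness explicitly, which you correctly flag and which does hold (each bridge is assigned to a single $\Delta_{b_l}$, and surfaces share only base triangles). One side remark of yours is wrong, though it does not damage your proof: induction on $\lvert E\rvert$ does \emph{not} fail here — it is exactly what the paper does — since toggling $T(\Delta^\mathcal{C}_{b_i})$ adds at most $\lvert T(B^n)\rvert=n$ base triangles while deleting the more than $n$ non-base triangles of surface $i$, so the weight strictly decreases. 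Finally, note that both arguments rely on the (essentially unstated) fact that each $\chi(\Delta^\mathcal{C}_{b_i})$ lies in $\ker\Delta^\mathcal{C}_B$, i.e.\ that each $\Delta^\mathcal{C}_{b_i}$ is a closed surface; your proof at least makes this dependence explicit.
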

\begin{proof}
By Corollary~\ref{cor:inj}, the mapping $f$ is injective.
It remains to be proven that $\dim\mathcal{C}=\dim\ker\Delta^\mathcal{C}_B$.
Suppose on the contrary that some codeword of $\ker\Delta^\mathcal{C}_B$ is not in the span of $\left\{f(b_1),\dots,f(b_d)\right\}$.
Let $c$ be such a codeword with the minimal possible weight $w(c)$.
The weight $w(c)$ means the number of non-zero coordinates of $c$. 
Let $E$ be an even subset of $\Delta^\mathcal{C}_B$ such that $\chi(\mathcal{K}(E))=c$.
By Lemma~\ref{lem:cycletriangles}, $\mathcal{K}(E)$ contains $\Delta^\mathcal{C}_{b_i}\setminus T(B^n)$ for some $i\in \left\{1,\dots,d\right\}$.
By definition of $\Delta^\mathcal{C}_{b_i}$, it holds $\left\lvert T(\Delta^\mathcal{C}_{b_i}\setminus T(B^n))\right\rvert>\left\lvert T(B^n)\right\rvert$.
Therefore, the inequality $\left\lvert E\bigtriangleup T(\Delta^\mathcal{C}_{b_i})\right\rvert < \left\lvert E\right\rvert$ holds.
Thus, $w(c)>w(\chi(\mathcal{K}(E\bigtriangleup T(\Delta^\mathcal{C}_{b_i})))$. This is a contradiction.

\end{proof}

The entries of the vectors of $\ker\Delta^\mathcal{C}_B$ are indexed by triangles and the entries of vectors of $\mathcal{C}$ are indexed by integers, we make a convention that a coordinate of $\ker\Delta^\mathcal{C}_B$ indexed by triangle $B^n_i$ corresponds to coordinate of $\mathcal{C}$ indexed by $i$. Now, we can state the following corollary.
\begin{cor}
$\mathcal{C}=\ker\Delta^\mathcal{C}_B/(T(\Delta^\mathcal{C}_B\setminus T(B^n)))$ and $\dim\ker\Delta^\mathcal{C}_B=\dim\mathcal{C}$.\qed
\end{cor}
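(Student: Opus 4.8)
The plan is to read off both assertions directly from the two results immediately preceding the corollary, namely Theorem~\ref{thm:trianrep} and Proposition~\ref{prop:mappingf}; no new geometric input is required. I would begin with the dimension equality. The mapping $f$ is linear by construction, and by Theorem~\ref{thm:trianrep} it is a bijection between $\mathcal{C}$ and $\ker\Delta^\mathcal{C}_B$. A linear bijection is an isomorphism of vector spaces, so $\dim\mathcal{C}=\dim\ker\Delta^\mathcal{C}_B$ is immediate.

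For the puncturing identity I would set $S:=T(\Delta^\mathcal{C}_B)\setminus T(B^n)$ and observe that puncturing a vector of $\ker\Delta^\mathcal{C}_B$ along $S$ deletes exactly the coordinates not indexed by one of $B^n_1,\dots,B^n_n$, leaving precisely the $n$ coordinates indexed by $B^n_1,\dots,B^n_n$. By the indexing convention fixed just above the corollary, these correspond to the coordinates $1,\dots,n$ of $\mathcal{C}$. The key step is to invoke Proposition~\ref{prop:mappingf}: for every $c\in\mathcal{C}$ we have $f(c)^{B^n_j}=c^j$ for all $j=1,\dots,n$, so the punctured vector $f(c)/S$ equals $c$ exactly.

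Finally I would combine surjectivity with this coordinate-recovery property. Since $f$ is onto $\ker\Delta^\mathcal{C}_B$ by Theorem~\ref{thm:trianrep}, as $c$ ranges over $\mathcal{C}$ the vectors $f(c)$ range over all of $\ker\Delta^\mathcal{C}_B$, and hence
$$\ker\Delta^\mathcal{C}_B/S=\{\,f(c)/S : c\in\mathcal{C}\,\}=\{\,c : c\in\mathcal{C}\,\}=\mathcal{C},$$
which is exactly the claimed equality $\mathcal{C}=\ker\Delta^\mathcal{C}_B/(T(\Delta^\mathcal{C}_B)\setminus T(B^n))$.

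I do not expect any genuine obstacle in this corollary: both ingredients are already established, and the only care needed is bookkeeping, namely verifying that $S$ is precisely the set of triangles complementary to $B^n$ and that the stated index convention lines up with the claim. The substantive work was carried earlier — by the weight-minimality argument of Theorem~\ref{thm:trianrep} that yields surjectivity of $f$, and by the inductive Proposition~\ref{prop:mappingf} that identifies the $B^n$-coordinates of $f(c)$ with the entries of $c$.
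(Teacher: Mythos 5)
Your proof is correct and is essentially the paper's own argument: the paper states this corollary with a \qed precisely because it follows immediately from Theorem~\ref{thm:trianrep} (linear bijectivity of $f$, giving the dimension equality) together with Proposition~\ref{prop:mappingf} and the stated indexing convention (so puncturing each $f(c)$ along the non-$B^n$ coordinates recovers $c$, and surjectivity of $f$ gives the set equality). Your write-up just makes that implicit bookkeeping explicit.
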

Thus, the triangular configuration $\Delta^\mathcal{C}_B$ is a geometric representation of $\mathcal{C}$.

\subsection{Proof of Theorem~\ref{thm:repr4d} (Every binary code has a representation in $\mathbb{R}^4)$}
\label{sec:repr4d}
In this section for every binary linear code we construct its geometric representation that can be embedded into $\mathbb{R}^4$.

Let $\mathcal{C}$ be a binary linear code of length $n$ and let $B=\{b_1,\dots,b_d\}$ be a basis of $\mathcal{C}$.
For every basis vector $b_i$ we construct triangular configuration $\Delta_{b_i}$ in this way: Let $Q$ be a three dimensional cube of size $1\times1\times1$. We put in the middle of this cube the triangular configuration $S^n$ defined in Section~\ref{sec:sn}.
We make an appropriate scaling of $S^n$ such that $S^n$ fits into the cube $Q$ and put $S^n$ into $Q$ in the way depicted in Figure~\ref{fig:snembededcube}. The triangles $S^n(k)$, $k=1,\dots,n$ of $S^n$ are in front.
\begin{figure}
 \centering
 \includegraphics{./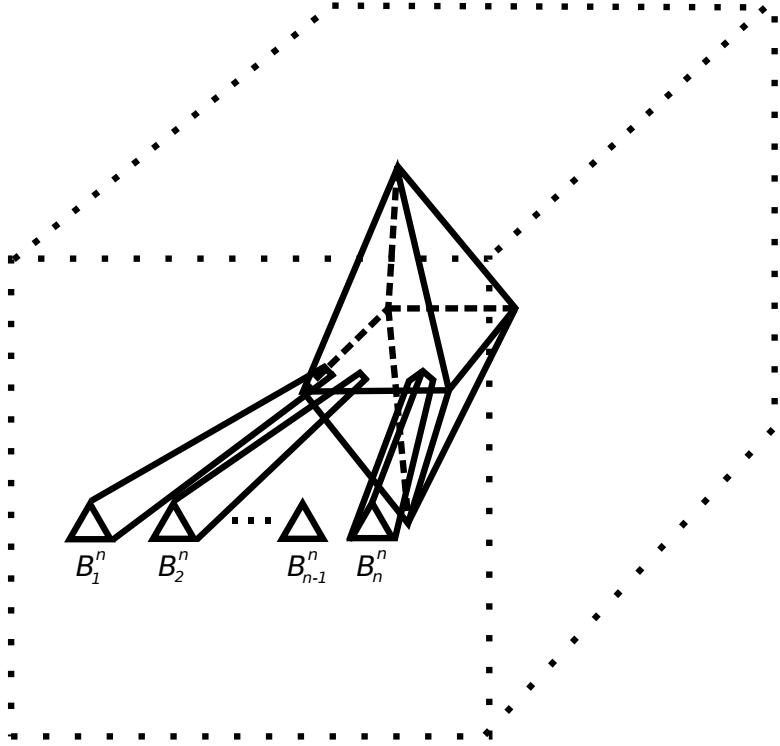}
 \caption{An example of $\Delta_{b_i}$ put into cube $Q$ for $b_i=(1,1,\dots,0,1)$.}
 \label{fig:snembededcube}
\end{figure}
Let $F$ be the front facet of $Q$ (front in the Figure~\ref{fig:snembededcube}). We put triangles $\{B^n_1,\dots,B^n_n\}$ to $F$ as is depicted in Figure~\ref{fig:snembededcube}.
Let $b_i$ equals $(b_i^1,\dots,b_i^n)$.
We initially set $\Delta_{b_i}:=S^n$.
For every non-zero coordinate $b_i^k$ we add tunnel (See Section~\ref{sec:tunnel}) $T(S^n(k),B^n_k)$ between triangles $S^n(k)$ and $B^n_k$ to $\Delta_{b_i}$.
Then we remove triangle $S^n(k)$ from $\Delta_{b_i}$.
An example of $\Delta_{b_i}$ for $b_i=(1,1,\dots,0,1)$ is depicted in Figure~\ref{fig:snembededcube}.
The cube $Q$ is not a part of $\Delta_{b_i}$, it is important that $\Delta_{b_i}$ is embedded into $Q$. We denote this cube by $Q(\Delta{b_i})$ and the facet $F$ by $F(\Delta_{b_i})$.

\begin{prop}
\label{prop:cubes4d}
Let $Q_1,\dots,Q_d$ be three dimensional cubes of the same size. Then the cubes can be embedded into $\mathbb{R}^4$ such that all cubes intersect at one facet and otherwise are disjoint.
\end{prop}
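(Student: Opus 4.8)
The plan is to realize the cubes as a \emph{fan} sharing the common facet as its spine, in direct analogy with the way several rectangles can share a single edge in $\mathbb{R}^3$ by hinging them at different angles. First I would fix the shared facet to be the unit square $F := [0,1]^2 \times \{(0,0)\}$ lying in the coordinate plane $x_3 = x_4 = 0$ of $\mathbb{R}^4$. Each cube will be obtained as the prism over $F$ in a direction lying in the orthogonal $x_3x_4$-plane, so that the two extra coordinates give exactly the room needed to keep the cubes apart.

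Concretely, I would choose $d$ pairwise distinct angles $\theta_1,\dots,\theta_d \in [0,\pi)$, set $u_i := (\cos\theta_i, \sin\theta_i)$, and define
\[
Q_i := \{ (x_1, x_2, s\cos\theta_i, s\sin\theta_i) : (x_1,x_2) \in [0,1]^2,\ s \in [0,1] \}.
\]
Each $Q_i$ is the isometric image of the unit cube under $(x_1,x_2,s) \mapsto (x_1,x_2,s u_i)$, hence a genuine three-dimensional cube of the prescribed size, and its facet $s=0$ equals $F$. Thus all $d$ cubes share the facet $F$.

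It then remains to verify that the cubes meet only in $F$. For $i \neq j$, a common point of $Q_i$ and $Q_j$ must agree in all four coordinates; agreement in the first two is automatic, while agreement in the last two forces $s_i u_i = s_j u_j$ in the $x_3x_4$-plane with $s_i, s_j \in [0,1]$. Since $\theta_i \neq \theta_j$ both lie in $[0,\pi)$, the vectors $u_i$ and $u_j$ point in distinct directions (not even antiparallel), so the rays $\{s u_i : s \geq 0\}$ and $\{s u_j : s \geq 0\}$ meet only at the origin; hence $s_i = s_j = 0$ and the point lies in $F$. Therefore $Q_i \cap Q_j = F$ for all $i \neq j$, which is what the statement requires.

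The construction is elementary, and I do not anticipate a genuine obstacle; the only step demanding care is this last verification, namely that distinct directions in the two-dimensional plane orthogonal to $F$ keep the prisms disjoint away from the spine. This is precisely the feature that fails in $\mathbb{R}^3$, where the space complementary to a $2$-facet is only one-dimensional and leaves no room to separate more than two cubes, and it is what makes the fourth dimension necessary.
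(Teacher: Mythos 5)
Your proposal is correct and takes essentially the same approach as the paper: the paper also realizes each cube as a prism $\{f+\alpha v_i \mid f\in F,\ \alpha\in[0,1]\}$ over a common square $F$, with the directions $v_i$ orthogonal to $F$ and pairwise linearly independent — exactly your vectors $(0,0,\cos\theta_i,\sin\theta_i)$. Your coordinate check that distinct directions in the orthogonal $x_3x_4$-plane force any common point back into $F$ is just a concrete (and slightly cleaner) version of the paper's contradiction argument, which derives a nonzero vector simultaneously parallel and orthogonal to $F$.
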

\begin{proof}
Fix a size $l$ of the edges of the cubes. Let $F$ be a square of size $l\times l$ embedded into $\mathbb{R}^4$. Let $v_1,\dots,v_d$ be vectors of $\mathbb{R}^4$ of length $l$ orthogonal to the square $F$ such that every two vectors of $v_1,\dots,v_d$ are linear independent.
Such vectors exist in $\mathbb{R}^4$.
Let $Q_i$ be the cube defined as $\{f+\alpha v_i\vert f\in F , \alpha\in[0,1]\}$.
The cubes intersect at facet $F$:
for a contradiction suppose that there are two cubes $Q_i$ and $Q_j$ such that $Q_i\cap Q_j\nsubseteq F$. Let $x$ be a point of $(Q_i\cap Q_j)\setminus F$.
Then $x=f_i+\alpha_i v_i=f_j+\alpha_j v_j$, where $f_i,f_j\in F$ and $\alpha_i,\alpha_j\in(0,1]$.
Since $v_i$ is not a linear combination of $v_j$, the points $f_i,f_j$ are different.
The point $f_i+\alpha_i v_i-\alpha_j v_j$ belongs to $F$. Thus, the vector $\alpha_i v_i-\alpha_j v_j$ is parallel to $F$.
Since $f_i,f_j$ are different, we have $\alpha_i v_i-\alpha_j v_j\neq0$.
Since the vector $\alpha_i v_i-\alpha_j v_j$ is a linear combination of two vectors $v_i,v_j$ orthogonal to $F$, the vector $\alpha_i v_i-\alpha_j v_j$ is also orthogonal to $F$. Thus, the vector $\alpha_i v_i-\alpha_j v_j$ is non-zero and orthogonal to itself. This is impossible in $\mathbb{R}^4$, a contradiction. The proposition follows.
\end{proof}

By Proposition~\ref{prop:cubes4d}, we can embed the cubes $Q(\Delta{b_1}),\dots,Q(\Delta{b_d})$ with $\Delta_{b_1},\dots,\Delta_{b_d}$ into $\mathbb{R}^4$ such that all cubes $Q(\Delta{b_1}),\dots,Q(\Delta{b_d})$ intersect on the facets $F(\Delta_{b_1}),\dots,F(\Delta_{b_d})$ and otherwise are disjoint and the triangular configurations $\Delta_{b_1},\dots,\Delta_{b_d}$ intersects on the triangles $B^n$ and otherwise are disjoint. The resulting triangular configuration is the geometric representation $\Delta^\mathcal{C}_B$ of $\mathcal{C}$ embedded into $\mathbb{R}^4$. An example of the representation of a binary linear code that is generated by two basis vectors $b_1,b_2$ is depicted in Figure~\ref{fig:constr4b}.

\begin{figure}
 \centering
 \includegraphics{./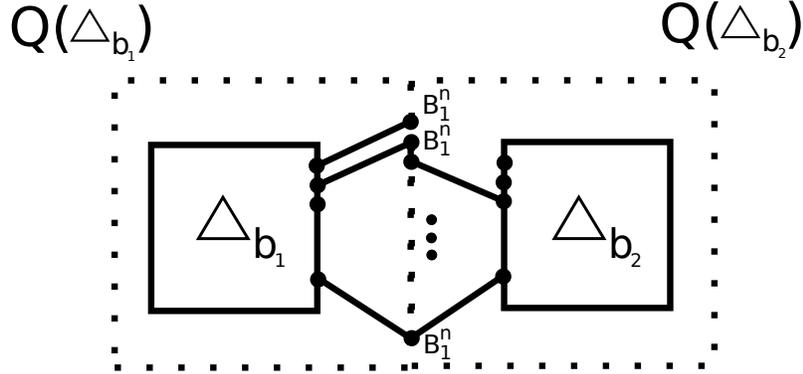}
 \caption{Top view on an example of the representation of a two dimensional code in $\mathbb{R}^4$}
 \label{fig:constr4b}
\end{figure}

The proof that $\Delta^\mathcal{C}_B$ is indeed geometric representation of $\mathcal{C}$ is the same as the proof in Subsection~\ref{sec:represents}.
This completes the proof of Theorem~\ref{thm:repr4d}.

\subsection{Proof of Theorem~\ref{thm:repr3dgraphs}}
\begin{prop}
\label{prop:2basisgraphs}
Let $\mathcal{C}$ be a binary linear code.
Then $\mathcal{C}$ has a 2-basis if and only if there is a graph $G$ such that $\mathcal{C}$ is equal to the cut space of $G$.
\end{prop}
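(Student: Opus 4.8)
The plan is to prove both implications by an explicit and essentially inverse pair of constructions that identify basis vectors with vertices and coordinates with edges.

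For the direction assuming $\mathcal{C}$ equals the cut space of a graph $G$, I would start from the standard fact that the cut space of $G$ is spanned by the vertex cuts (stars) $\delta(v)$, $v\in V(G)$, where $\delta(v)$ is the set of non-loop edges incident with $v$. Each non-loop edge lies in exactly two of these stars and each loop in none, so over $GF(2)$ the stars satisfy exactly one linear relation per connected component: the stars of a component sum to zero. Deleting one star from each connected component therefore yields a basis of the cut space. In this basis every edge, i.e.\ every coordinate, is incident with at most two of the retained vertices, hence is non-zero in at most two basis vectors; this is precisely a 2-basis.

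For the converse, given a 2-basis $B=\{b_1,\dots,b_d\}$ of $\mathcal{C}\subseteq GF(2)^n$, I would build a graph $G$ on vertex set $\{v_0,v_1,\dots,v_d\}$, one vertex $v_i$ per basis vector together with a ground vertex $v_0$, and with one edge per coordinate $j\in\{1,\dots,n\}$. The edge for coordinate $j$ joins $v_k$ and $v_l$ when $j$ is non-zero in exactly the two basis vectors $b_k,b_l$; joins $v_k$ and $v_0$ when $j$ is non-zero in exactly one basis vector $b_k$; and is a loop at $v_0$ when $j$ is non-zero in no basis vector. The key computation is that under this construction the star $\delta(v_k)$ consists exactly of the coordinates $j$ with $b_k^j=1$, so that $\delta(v_k)=b_k$ as a vector for each $k=1,\dots,d$. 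Since $\delta(v_0)$ equals the sum of all other stars (every non-loop edge lies in exactly two stars and loops in none), it already lies in $\linspan{b_1,\dots,b_d}$; hence the cut space of $G$, spanned by $\delta(v_0),\dots,\delta(v_d)$, equals $\linspan{b_1,\dots,b_d}=\mathcal{C}$.

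The only delicate point I anticipate is the bookkeeping of coordinates with exceptional support. Coordinates supported in a single basis vector force the ground vertex $v_0$, and coordinates supported in no basis vector (the always-zero coordinates of $\mathcal{C}$) cannot be realised by any proper edge, since every proper edge lies in some cut; these must be realised by loops, which contribute the zero vector to every cut. The essential observation that makes the whole argument work is that the 2-basis property is exactly the condition guaranteeing each coordinate is incident with at most two vertices, so that it can be realised by a single edge. This is immediate from the definition, and once the loop convention is fixed the two constructions are visibly inverse to one another, so the equivalence follows.
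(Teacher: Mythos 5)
Your proof is correct and takes essentially the same route as the paper: the identical vertex-per-basis-vector, edge-per-coordinate construction (ground vertex for singly-supported coordinates, loops for all-zero coordinates) in one direction, and vertex stars with one star omitted in the other. If anything, your version is slightly more careful, since you delete one star per connected component, whereas the paper deletes only a single vertex's star, which literally gives a basis only when $G$ is connected.
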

\begin{proof}
First, we prove that every binary linear code with a 2-basis is a cut space of a graph possibly with loops and parallel edges.
Let $\mathcal{C}$ be a binary linear code of length $n$ with a 2-basis $B=\{b_1,\dots,b_d\}$.
We define a graph $G=(V,E)$ possibly with parallel edges and loops as follows:
We define the set of vertices $V$ as:
$$V:=B\cup\{u\}.$$
For $i=1,\dots,n$; we define edge $e_i$ as follows:
If all basis codewords of $B$ have the entry indexed by coordinate $i$ equals to zero,
we set $e_i$ to be a loop $(u,u)$.
If there is exactly one basis codeword $b_l\in B$ that has non-zero entry indexed by $i$, we set $e_i$ to be $(b_l,v)$.
If there are exactly two basis codewords $b_l,b_k\in B$ that have non-zero entry indexed by $i$, we set $e_i$ to be $(b_l,b_k)$.
Then the set of edges $E$ of $G$ is $\{e_i\vert i=1,\dots,n\}$.

Let $E(v)$ be the set of edges incident with a vertex $v$. Let $E'$ be a subset of $E$. We define the incidence vector of $E'$ is $\chi(E'):=(\chi(E')_1,\dots,\chi(E')_n)$, where $\chi(E')_i=1$ if $e_i\in E'$ and $\chi(E')_i=0$ otherwise.
By definition, the set $B':=\{\chi(E(v))\vert v\in (V\setminus \{u\})\}$ equals $B$. It is known fact that the set $B'$ generates the cut space of $G$, for a proof see for example Diestel~\cite{diestel}.

Now, we prove the reverse implication. Let $G$ be a graph and let $u$ be a vertex of $G$. Then the set $B':=\{\chi(E(v))\vert v\in (V\setminus \{u\})\}$ is a basis of the cut space of $G$. Since every edge of $G$ is incident at most with two vertices, the set $B'$ is a 2-basis of the cut space of $G$.
\end{proof}

Now, we finish the proof of Theorem~\ref{thm:repr3dgraphs}.
\begin{proof}[Proof of Theorem~\ref{thm:repr3dgraphs}]
By proposition~\ref{prop:2basisgraphs}, a binary linear code $\mathcal{C}$ has 2-basis if and only if $\mathcal{C}$ is a cut space of a graph.
By Theorem~\ref{thm:repr3d}, code $\mathcal{C}$ has representation in $\mathbb{R}^3$ if and only if it has a 2-basis. If code $\mathcal{C}$ has no 2-basis. Then from Theorem~\ref{thm:repr4d} follows that $\mathcal{C}$ has representation in $\mathbb{R}^4$.
\end{proof}
\begin{proof}[Proof of Corollary~\ref{cor:dec3d}]
It is a known fact that there is polynomial algorithm that decide if a given binary linear code is a cut space of a graph (See Seymour~\cite{seymourgraphic}).
\end{proof}

\begin{prop}
Let $G$ be a non-planar graph. Then the cycle space of $G$ has no 2-basis.
\end{prop}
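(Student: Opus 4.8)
The plan is to recognize this proposition as the contrapositive of one direction of Mac Lane's planarity criterion~\cite{maclane}, which the paper has already singled out (just after Theorem~\ref{thm:repr3d}) as the graph-theoretic analogue of its main result. Mac Lane's criterion asserts that a graph $G$ is planar if and only if its cycle space possesses a basis of cycles in which every edge lies on at most two basis cycles. The first thing I would do is check that this ``$2$-fold basis'' of Mac Lane coincides verbatim with the notion of $2$-basis used in this paper: the cycle space of $G$ is the binary code whose coordinates are indexed by the edges of $G$, so a basis in which each coordinate (edge) is non-zero in at most two basis vectors is exactly a family of basis cycles covering each edge at most twice.

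Once the definitions are aligned, the conclusion is immediate by contraposition. If $G$ is non-planar, then by Mac Lane's criterion its cycle space admits no basis of cycles covering each edge at most twice; equivalently, its cycle space has no $2$-basis, which is precisely what is claimed. I expect the only delicate point to be this matching of definitions rather than any genuine topological work, since all of the substance is carried by the cited criterion.

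As an alternative that stays closer to the machinery of this section, I would instead argue by contradiction: suppose the cycle space of $G$ had a $2$-basis. Then Proposition~\ref{prop:2basisgraphs} would realise the cycle space of $G$ as the cut space of some graph $H$ on the same edge set, so the cycle matroid of $G$ would be the dual of the cycle matroid of $H$, i.e.\ cographic; but a graph whose cycle matroid is cographic is planar (Whitney duality), contradicting the non-planarity of $G$. This route reuses Proposition~\ref{prop:2basisgraphs} but imports an external matroid-duality fact, so I would still prefer the direct appeal to Mac Lane's criterion as the most transparent proof.
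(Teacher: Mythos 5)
Your primary argument---invoking Mac Lane's planarity criterion by contraposition, after checking that Mac Lane's ``each edge on at most two basis cycles'' condition coincides with the paper's notion of a 2-basis of the cycle space---is exactly the paper's own proof, which simply cites Mac Lane~\cite{maclane} and O'Neil~\cite{oneil}. Your alternative route via Proposition~\ref{prop:2basisgraphs} and Whitney's characterization of planarity through cographic cycle matroids is also sound, but the direct appeal to Mac Lane is what the paper does and suffices.
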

\begin{proof}
Follows from the Mac Lane's planarity criterion. See Mac Lane~\cite{maclane} or O'Neil~\cite{oneil}.
\end{proof}

\section*{Acknowledgment}
I would like to thank Martin Loebl for helpful discussions and continual support.

\bibliography{references}{}
\bibliographystyle{abbrv}

\end{document}